\newtheorem{thm}{Theorem}
\newtheorem{cor}[thm]{Corollary}
\newtheorem{lem}[thm]{Lemma}
\newtheorem{defi}[thm]{Definition}
\newtheorem{remark}[thm]{Remark}
\newtheorem{example}[thm]{Example}
\newtheorem{pb}[thm]{Problem}
\newtheorem{conj}[thm]{Conjecture}
\newenvironment{rk}{\begin{remark}\rm}{\end{remark}}
\newenvironment{problem}{\begin{pb}\rm}{\end{pb}}
\newcommand{\real}{{\mathbb R}}
\newcommand{\nat}{{\mathbb N}}
\newcommand{\ent}{{\mathbb Z}}
\newcommand{\com}{{\mathbb C}}
\newcommand{\un}{{\mathds {1}}}
\newcommand{\T}{{\mathbb T}}
\newcommand{\A}{{\mathcal A}}
\newcommand{\D}{{\mathcal D}}
\newcommand{\E}{{\mathbb E}}
\newcommand{\F}{{\mathcal F}}
\renewcommand{\H}{{\mathcal H}}
\newcommand{\R}{{\mathcal R}}
\newcommand{\BMO}{{\mathrm{BMO}}}
\renewcommand{\i}{{\rm i}}
\renewcommand{\a}{\alpha}
\renewcommand{\b}{\beta}
\newcommand{\g}{\gamma}
\renewcommand{\d}{\delta}
\renewcommand{\t}{\theta}
\newcommand{\e}{\varepsilon}
\newcommand{\f}{\varphi}
\newcommand{\p}{\psi}
\renewcommand{\l}{\lambda}
\renewcommand{\O}{\Omega}
\newcommand{\s}{\sigma}
\newcommand{\Si}{\Sigma}
\newcommand{\ot}{\otimes}
\newcommand{\8}{\infty}
\newcommand{\el}{\ell}
\newcommand{\la}{\langle}
\newcommand{\ra}{\rangle}
\newcommand{\wt}{\widetilde}
\newcommand{\wh}{\widehat}
\newcommand{\n}{\noindent}
\newcommand{\les}{\lesssim}
\newcommand{\ges}{\gtrsim }
\newcommand{\be}{\begin{align*}}
\newcommand{\ee}{\end{align*}}
\newcommand{\beq}{\begin{equation}}
\newcommand{\eeq}{\end{equation}}
\newcommand{\beqn}{\begin{equation*}}
\newcommand{\eeqn}{\end{equation*}}
\begin{document}

\title[Optimal orders of the best constants in the Littlewood-Paley inequalities]{Optimal orders of the best constants in the Littlewood-Paley inequalities}

\thanks{{\it 2000 Mathematics Subject Classification:} Primary: 42B25, 42B30. Secondary: 46E30}
\thanks{{\it Key words:} Littlewood-Paley  inequalities, best constants, optimal orders, de Leeuw type theorem}

\author[Quanhua  Xu]{Quanhua Xu}
\address{
Laboratoire de Math{\'e}matiques, Universit{\'e} de Bourgogne Franche-Comt{\'e}, 25030 Besan\c{c}on Cedex, France}
\email{qxu@univ-fcomte.fr}

\date{}
\maketitle

 \begin{abstract}
 Let  $\{\mathbb{P}_t\}_{t>0}$ be the classical Poisson semigroup on $\mathbb{R}^d$ and $G^{\mathbb{P}}$ the associated  Littlewood-Paley $g$-function operator:
  $$G^{\mathbb{P}}(f)=\Big(\int_0^\infty t|\frac{\partial}{\partial t} \mathbb{P}_t(f)|^2dt\Big)^{\frac12}.$$
The classical Littlewood-Paley $g$-function inequality asserts that for any $1<p<\infty$ there exist two positive constants $\mathsf{L}^{\mathbb{P}}_{t, p}$ and $\mathsf{L}^{\mathbb{P}}_{c, p}$ such that
 $$
 \big(\mathsf{L}^{\mathbb{P}}_{t, p}\big)^{-1}\big\|f\big\|_{p}\le \big\|G^{\mathbb{P}}(f)\big\|_{p}
 \le  \mathsf{L}^{\mathbb{P}}_{c,p}\big\|f\big\|_{p}\,,\quad f\in L_p(\mathbb{R}^d).
 $$
We determine the optimal orders of magnitude on $p$ of these constants as $p\to1$ and $p\to\infty$. We also consider similar problems for more general test functions in place of the Poisson kernel.

The corresponding problem on the Littlewood-Paley dyadic square function inequality is investigated too. Let $\Delta$ be the partition  of $\mathbb{R}^d$ into dyadic rectangles and  $S_R$ the partial sum operator associated to $R$. The dyadic Littlewood-Paley square function of $f$ is
 $$S^\Delta(f)=\Big(\sum_{R\in\Delta} |S_R(f)|^2\Big)^{\frac12}.$$
For $1<p<\infty$ there exist two positive constants $\mathsf{L}^{\Delta}_{c,p, d}$ and $ \mathsf{L}^{\Delta}_{t,p, d}$ such that
 $$
 \big(\mathsf{L}^{\Delta}_{t,p, d}\big)^{-1}\big\|f\big\|_{p}\le \big\|S^\Delta(f)\big\|_{p}\le \mathsf{L}^{\Delta}_{c,p, d}\big\|f\big\|_{p},\quad f\in L_p(\mathbb{R}^d).
 $$
We show that
 $$\mathsf{L}^{\Delta}_{t,p, d}\approx_d (\mathsf{L}^{\Delta}_{t,p, 1})^d\;\text{ and }\; \mathsf{L}^{\Delta}_{c,p, d}\approx_d (\mathsf{L}^{\Delta}_{c,p, 1})^d.$$

All the previous results can be equally formulated for the $d$-torus $\mathbb{T}^d$. We prove a de Leeuw type transference principle in the vector-valued setting.
 \end{abstract}


\section{Introduction}\label{Introduction}


In the recent investigation \cite{HFC-LPS} on the vector-valued Littlewood-Paley-Stein theory, we are confronted with the problem of determining  the optimal orders of growth on $p$ of the best constants in the classical Littlewood-Paley $g$-function inequality.  The present article deals with this problem as well as the similar one about another classical Littlewood-Paley inequality on the dyadic square function.

\subsection{Littlewood-Paley $g$-function inequality}\label{Littlewood-Paley g-function inequality}

This inequality concerns the $g$-function associated to the Poisson semigroup $\{\mathbb{P}_t\}_{t>0}$ on $\real^d$ whose convolution kernel is
 $$\mathbb{P}_t(x)=\frac{c_d\,t}{(|x|^2+t^2)^{(d+1)/2}}\,.$$
The $g$-square function of  $f\in L_p(\real^d)$ is defined as
  \beq\label{g-function}
  G^{\mathbb{P}}(f)(x)=\Big(\int_0^\8t|\frac{\partial}{\partial t} \mathbb{P}_t(f)(x)|^2dt\Big)^{\frac12}\,, \quad x\in\real^d.
  \eeq
 The classical Littlewood-Paley inequality asserts that for any $1<p<\8$ there exist two positive constants $\mathsf{L}^{\mathbb{P}}_{t, p}$ and $\mathsf{L}^{\mathbb{P}}_{c, p}$ such that
 \beq\label{CLPt}
 \big(\mathsf{L}^{\mathbb{P}}_{t, p}\big)^{-1}\big\|f\big\|_{p}\le \big\|G^{\mathbb{P}}(f)\big\|_{p}
 \le  \mathsf{L}^{\mathbb{P}}_{c,p}\big\|f\big\|_{p}\,,\quad f\in L_p(\real^d).
 \eeq
 We refer to Stein's survey paper \cite{stein-square} for more information as well as historical references. In the sequel, $\mathsf{L}^{\mathbb{P}}_{t, p}$ and $\mathsf{L}^{\mathbb{P}}_{c, p}$ will denote the best constants in \eqref{CLPt}. The use of such notation comes from the vector-valued case studied in \cite{HFC-LPS}, the subscripts $t$ and $c$ refer to type and cotype inequalities, respectively, that is, the complex field $\com$ is of both Lusin type and cotype $2$ in the sense of  \cite{HFC-LPS}. These constants implicitly depend on the dimension $d$ too. However, our main objective concerns the optimal orders of their magnitude on $p$ as $p\to1$ and $p\to\8$ though we also wish to have dimension free estimates.

Curiously, these optimal orders  have not been completely determined in the literature (to the best of our knowledge).
It is also natural to consider the heat semigroup $\{\mathbb{H}_t\}_{t>0}$ on $\real^d$  with kernel
 $$\mathbb{H}_t(x)=(4\pi t)^{-\frac d2}e^{-\frac{|x|^2}{4t}}\,.$$
The associated $g$-function is defined by \eqref{g-function} with $\mathbb{P}$ replaced by $\mathbb{H}$.
Then
 $$
 \big(\mathsf{L}^{\mathbb{H}}_{t, p}\big)^{-1}\big\|f\big\|_{p}\le \big\|G^{\mathbb{H}}(f)\big\|_{p}
 \le  \mathsf{L}^{\mathbb{H}}_{c,p}\big\|f\big\|_{p}\,,\quad f\in L_p(\real^d).
 $$

We will  use the following convention:  $A\lesssim B$ (resp. $A\lesssim_\a B$) means that $A\le C B$ (resp. $A\le C_\a B$) for some absolute positive constant $C$ (resp. a positive constant $C_\a$ depending only on a parameter $\a$).  $A\approx B$ or  $A\approx_\a B$ means that these inequalities as well as their inverses hold.   $p'$ will denote the conjugate index of $p$.

\medskip

The following theorem determines the optimal orders of  the previous constants except  those of  $\mathsf{L}^{\mathbb{P}}_{t, p}$ and  $\mathsf{L}^{\mathbb{H}}_{t, p}$ as $p\to\8$. Part of this  theorem is known, see the historical comments at the end of this subsection.

\begin{thm}\label{PH ML}
 Let $1<p<\8$. Recall that $\mathsf{L}^{\mathbb{P}}_{t, p}$ and  $\mathsf{L}^{\mathbb{P}}_{c, p}$ are the best constants in the following inequalities
  $$\big(\mathsf{L}^{\mathbb{P}}_{t, p}\big)^{-1}\big\|f\big\|_{p}\le \big\|G^{\mathbb{P}}(f)\big\|_{p}
 \le  \mathsf{L}^{\mathbb{P}}_{c,p}\big\|f\big\|_{p}\,,\quad f\in L_p(\real^d).$$
Similarly, we have the best constants $\mathsf{L}^{\mathbb{H}}_{t, p}$ and  $\mathsf{L}^{\mathbb{H}}_{c, p}$ corresponding to the heat semigroup.
Then
 \begin{enumerate}[\rm(i)]
  \item $\mathsf{L}^{\mathbb{P}}_{c,p}\approx \max(\sqrt p,\,p')$ and $\max(\sqrt p,\,p')\les\mathsf{L}^{\mathbb{H}}_{c,p}\les_d \max(\sqrt p,\,p')$;
 \item $1\les \mathsf{L}^{\mathbb{H}}_{t, p}\les \mathsf{L}^{\mathbb{P}}_{t, p}\les_d 1$  for $1<p\le 2$;
 \item $\sqrt p\les\mathsf{L}^{\mathbb{H}}_{t, p}\les \mathsf{L}^{\mathbb{P}}_{t, p}\les p$  for $2\le p<\8$.
 \end{enumerate}
 \end{thm}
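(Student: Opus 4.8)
The plan is to isolate two structural reductions coming from the material above and then to prove the Poisson cotype bound (i) plus one new extremal example. First, the Calder\'on reproducing formula: since $\pa_t^2\mathbb{P}_t=-\Dd\,\mathbb{P}_t$ one has $\la f,g\ra=\int_0^\8 t\,\tfrac{d^2}{dt^2}\la \mathbb{P}_tf,\mathbb{P}_tg\ra\,dt=4\int_0^\8 t\,\la\pa_t\mathbb{P}_tf,\pa_t\mathbb{P}_tg\ra\,dt$ on a dense class of $f,g$, and the identical computation works for $\mathbb{H}$ via $\pa_s\mathbb{H}_s=\Dd\,\mathbb{H}_s$. Estimating the right side by Cauchy--Schwarz in $t$ and then H\"older in $x$ gives $|\la f,g\ra|\le 4\|G^{\mathbb{P}}f\|_p\|G^{\mathbb{P}}g\|_{p'}$, hence by duality $\mathsf{L}^{\mathbb{P}}_{t,p}\les\mathsf{L}^{\mathbb{P}}_{c,p'}$ and $\mathsf{L}^{\mathbb{H}}_{t,p}\les\mathsf{L}^{\mathbb{H}}_{c,p'}$. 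Second, subordination $\mathbb{P}_t=\int_0^\8\mathbb{H}_s\,d\mu_t(s)$ yields $\|G^{\mathbb{P}}f\|_p\les\|G^{\mathbb{H}}f\|_p$ (dimension free, via Hardy's inequality and the universal subordination density) and $\|G^{\mathbb{H}}f\|_p\les_d\|G^{\mathbb{P}}f\|_p$, whence $\mathsf{L}^{\mathbb{H}}_{t,p}\les\mathsf{L}^{\mathbb{P}}_{t,p}$ and $\mathsf{L}^{\mathbb{H}}_{c,p}\les_d\mathsf{L}^{\mathbb{P}}_{c,p}$. Thus the heat statements reduce to the Poisson ones and (iii) reduces to (i) plus a lower bound for $\mathsf{L}^{\mathbb{H}}_{t,p}$.

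For the cotype bound $\mathsf{L}^{\mathbb{P}}_{c,p}\approx\max(\sqrt p,p')$ I would split on $p$. For $2\le p<\8$: interpolate the exact identity $\|G^{\mathbb{P}}f\|_2=\tfrac12\|f\|_2$ with the endpoint estimate $G^{\mathbb{P}}\colon L_\8\to\BMO$ (equivalently, $t\,|\nabla_{x,t}\mathbb{P}_tf|^2\,dx\,dt$ is a Carleson measure of norm $\les\|f\|_{\BMO}^2$), obtaining $\mathsf{L}^{\mathbb{P}}_{c,p}\les\sqrt p$, with the dimension-free constant secured by exploiting the half-space structure of the harmonic extension (localizing one variable and controlling the rest by the dimension-free centered maximal function). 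For $1<p\le2$: treat $f\mapsto(\sqrt t\,\pa_t\mathbb{P}_tf)_t$ as a Hilbert-space-valued Calder\'on--Zygmund operator, bounded on $L_2$, and use the weak-$(1,1)$ estimate with Marcinkiewicz interpolation to get $\mathsf{L}^{\mathbb{P}}_{c,p}\les(p-1)^{-1}\approx p'$, again made dimension free by the special structure. For the lower bounds: with $f=\un_{B(0,1)}$ one checks $G^{\mathbb{P}}f(x)\approx c_d|x|^{-d}$ as $|x|\to\8$, so $\|G^{\mathbb{P}}f\|_p\ges(p-1)^{-1/p}\approx p'$ as $p\to1$; and for $p\to\8$ one invokes the classical sharpness example for square functions (a lacunary series or Riesz product) realizing $\|G^{\mathbb{P}}f\|_p\ges\sqrt p\,\|f\|_p$. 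The heat part of (i) then follows from the subordination comparison, the dimensional constant entering through $\|G^{\mathbb{H}}f\|_p\les_d\|G^{\mathbb{P}}f\|_p$.

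Part (ii) is the classical reverse inequality in the easy range: $\mathsf{L}^{\mathbb{H}}_{t,p}\ges1$ is immediate from $\|G^{\mathbb{H}}f\|_2=\tfrac12\|f\|_2$ (plus continuity in $p$), while the upper bound $\mathsf{L}^{\mathbb{P}}_{t,p}\les_d1$ for $1<p\le2$ cannot come from the duality above, which gives only $\les\sqrt{p'}\to\8$; instead I would use the Fefferman--Stein good-$\l$ inequality comparing the nontangential maximal function $N^{\mathbb{P}}(f)$, which dominates $|f|$ a.e., with $G^{\mathbb{P}}(f)$ (equivalently with the Lusin area function), whose constant is bounded for $p$ in compact subsets of $(0,\8)$, so $\|f\|_p\le\|N^{\mathbb{P}}f\|_p\les_d\|G^{\mathbb{P}}f\|_p$; the covering/stopping-time argument is the source of the dimensional dependence. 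Then $\mathsf{L}^{\mathbb{H}}_{t,p}\les\mathsf{L}^{\mathbb{P}}_{t,p}$ by subordination, closing the sandwich in (ii).

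Finally (iii). The upper bound is now free: $\mathsf{L}^{\mathbb{H}}_{t,p}\les\mathsf{L}^{\mathbb{P}}_{t,p}\les\mathsf{L}^{\mathbb{P}}_{c,p'}\les\max(\sqrt{p'},p)=p$ for $p\ge2$, all dimension free. The lower bound $\sqrt p\les\mathsf{L}^{\mathbb{H}}_{t,p}$ is the heart of the matter; I would derive it from a lacunary example on $\T$ transferred to $\real^d$ by the de Leeuw-type theorem of the paper. Let $f_N^\e=\sum_{k=1}^N\e_k e^{i2^kx}$ with signs $\e_k$ to be chosen. Expanding $G^{\mathbb{H}}f_N^\e(x)^2=\int_0^\8 t\,\big|\sum_k\e_k 4^k e^{-t4^k}e^{i2^kx}\big|^2\,dt$ gives the diagonal term $\tfrac14N$ plus a trigonometric polynomial $\sum_{j\neq k}\e_j\overline{\e_k}\,\tfrac{4^{j+k}}{(4^j+4^k)^2}e^{i(2^j-2^k)x}$ whose coefficients decay geometrically in $|j-k|$, hence have $\el^1$-norm $\les N$; therefore $G^{\mathbb{H}}f_N^\e\les\sqrt N$ pointwise, uniformly in the signs, and $\|G^{\mathbb{H}}f_N^\e\|_p\les\sqrt N$. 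On the other hand $\E_\e\|f_N^\e\|_p^p=\int_\T\E_\e|\sum_k\e_k e^{i2^kx}|^p\,dx\ges(Np)^{p/2}$ for $2\le p\les N$, since for a.e.\ $x$ the real part is a sum of $N$ independent bounded mean-zero terms of total variance $\approx N/2$, whose $p$-th moment dominates the Gaussian one in that range; so some choice of signs gives $\|f_N^\e\|_p\ges\sqrt{Np}$. Taking $N\approx p$ yields $\mathsf{L}^{\mathbb{H}}_{t,p}\ges\|f_N^\e\|_p/\|G^{\mathbb{H}}f_N^\e\|_p\ges\sqrt p$, and the same computation applies verbatim to $\mathbb{P}$. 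The main obstacles I anticipate are (a) keeping the Poisson cotype estimates genuinely dimension free, especially $\mathsf{L}^{\mathbb{P}}_{c,p}\les p'$ as $p\to1$, where plain Calder\'on--Zygmund theory costs a factor depending on $d$, and (b) the analysis of the lacunary example, namely the uniform pointwise bound for $G^{\mathbb{H}}f_N^\e$ and the Gaussian lower moment estimate in the range $p\les N$. It should also be stressed that this argument only traps $\mathsf{L}^{\mathbb{P}}_{t,p}$ and $\mathsf{L}^{\mathbb{H}}_{t,p}$ between $\sqrt p$ and $p$ as $p\to\8$; their exact order there remains open.
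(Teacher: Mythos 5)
Your reductions are sound as far as they go: subordination gives $G^{\mathbb{P}}\le\sqrt2\,G^{\mathbb{H}}$ pointwise (hence $\mathsf{L}^{\mathbb{H}}_{c,p}\ges\mathsf{L}^{\mathbb{P}}_{c,p}$ and $\mathsf{L}^{\mathbb{H}}_{t,p}\les\mathsf{L}^{\mathbb{P}}_{t,p}$), the Calder\'on identity gives $\mathsf{L}_{t,p}\les\mathsf{L}_{c,p'}$, your $\un_{B(0,1)}$ example is an adequate variant of the paper's $f=\mathbb{P}_s$ for the $p'$ lower bound as $p\to1$, and your lacunary argument for $\mathsf{L}^{\mathbb{H}}_{t,p}\ges\sqrt p$ is essentially identical to the paper's (a fixed lacunary sum is simpler than your randomized one, but the mechanism --- $\|G^{\mathbb{H}}(\sum a_ke^{\i 2^k\t})\|_p\approx\|a\|_{\el_2}$ versus $\|\sum a_ke^{\i 2^k\t}\|_p\approx\sqrt p\,\|a\|_{\el_2}$ --- is the same). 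Your part (ii) via the Fefferman--Stein good-$\lambda$ chain $\|f\|_p\le\|N^{\mathbb{P}}f\|_p\les_d\|S^{\mathbb{P}}f\|_p\les_d\|G^{\mathbb{P}}f\|_p$ is a legitimate alternative to the paper's Mei-style BMO$_q$ duality argument (Lemmas~\ref{duality1}--\ref{duality2}); it works for the Poisson kernel because the $S\les G$ step in $L_p$ for harmonic extensions is classical in [FS], although the paper's route is what extends to all $\f\in\H_{\e,\d}$.

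There is, however, a genuine gap at the heart of part (i): the lower bound $\mathsf{L}^{\mathbb{P}}_{c,p}\ges\sqrt p$ as $p\to\8$. You propose to ``invoke the classical sharpness example for square functions (a lacunary series or Riesz product) realizing $\|G^{\mathbb{P}}f\|_p\ges\sqrt p\,\|f\|_p$.'' But the lacunary example goes in the \emph{opposite} direction: for $f=\sum_{k\le N}a_ke^{\i 2^k\t}$ one has $\|G^{\mathbb{P}}f\|_p\approx\|a\|_{\el_2}$ (essentially independent of $p$) while $\|f\|_p\approx\sqrt p\,\|a\|_{\el_2}$, so $\|G^{\mathbb{P}}f\|_p/\|f\|_p\approx 1/\sqrt p\to 0$. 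This is precisely why lacunary series are extremal for the \emph{type} constant $\mathsf{L}_{t,p}$ (your part (iii)) and useless for the \emph{cotype} constant $\mathsf{L}_{c,p}$. Riesz products are named without any computation, and I do not see how a Riesz product produces the needed ratio either; what is needed is a function whose $g$-function is occasionally much larger than $\|f\|_\8$ on a set of small but not too small measure, which is the signature of a stopped random walk, not of a lacunary or Riesz-type function. The paper's treatment is in fact the technically hardest part of its proof of Theorem~\ref{PH ML}: it transfers the inequality on $\T$ to a dyadic martingale inequality via the Bourgain--[LP0] rapidly-lacunary substitution $\t_k\mapsto\t_k+n_k\t$, refines the analysis of $G^{\mathsf{P}}(b_{k,(n)})$ to extract a lower bound $\ges\cos^2(\t_k+n_k\t)$, and then evaluates the resulting martingale square function for the simple random walk stopped at $\pm2$, where the event $\{\tau=j,\ |\cos\t_k|\ge 1/\sqrt 2\text{ for }k\le j\}$ has probability $8^{-j/2}$, giving $\sum_j j^{p/2}8^{-j}\ges c^pp^{p/2}$ while $\|M_K\|_\8\le2$. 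Nothing in your sketch substitutes for this argument, and the bound is explicitly listed in the paper's historical comments (vi) as one of its new contributions, so calling it ``classical'' also misattributes it.

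A secondary soft spot: you assert $\|G^{\mathbb{H}}f\|_p\les_d\|G^{\mathbb{P}}f\|_p$ as a consequence of subordination. Subordination only gives the one-sided pointwise bound $G^{\mathbb{P}}\les G^{\mathbb{H}}$; the converse comparison in $L_p$ is a Littlewood--Paley equivalence that needs its own argument. The paper does not prove or use such a comparison; instead it obtains $\mathsf{L}^{\mathbb{H}}_{c,p}\les_d\max(\sqrt p,p')$ by applying the general Theorem~\ref{fML}(i) directly to the heat kernel (which lies in $\H_{\e,\d}$ after the change of variables $t\mapsto t^2$). Similarly, your sketches for the dimension-free upper bounds $\mathsf{L}^{\mathbb{P}}_{c,p}\les\sqrt p$ ($p\ge2$) and $\les p'$ ($p\le2$) --- ``localizing one variable and controlling the rest by the dimension-free centered maximal function'' --- are too vague to count as proofs; the paper simply cites Meyer and [HFC-LPS] for these.
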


We will show a more general result. Given $\e>0$ and $\d>0$ let $\H_{\e, \d}$ denote the class of all functions $\f:\real^d\to\com$ satisfying
\beq\label{Holder}
 \left\{\begin{array}{lcl}
 \displaystyle |\f(x)|\le\frac{1}{(1+|x|)^{d+\e}}, & x\in\real^d;\\
 \displaystyle  |\f(x)-\f(y)|\le \frac{|x-y|^\d}{(1+|x|)^{d+\e+\d}}+\frac{|x-y|^\d}{(1+|y|)^{d+\e+\d}},& x, y\in\real^d;\\
  \displaystyle \int_{\real^d}\f(x)dx=0.
  \end{array}\right.
  \eeq
 We say that $\f$  is {\it nondegenerate} if there exists another function $\p\in \H_{\e, \d}$ such that
 \beq\label{reproduce}
  \int_0^\8 \wh\f(t\xi)\, \wh\p (t\xi)\, \frac{dt}{t} = 1,\quad \forall\xi\in\real^d\setminus\{0\}.
   \eeq
Let $\f_t(x)=\frac1{t^d}\f(\frac{x}{t})$. Define
 \beq\label{G-function}
 G^{\f}(f)(x)=\Big(\int_0^\8|\f_t*f(x)|^2\,\frac{dt}t\Big)^{\frac12}, \quad x\in\real^d
 \eeq
for any (reasonable) function $f$ on $\real^d$. Then it is well known that the following inequality holds
 \beq\label{G-LP}
 \big(\mathsf{L}^{\f}_{t,p}\big)^{-1}\big\|f\big\|_{p}\le \big\|G^{\f}(f)\big\|_{p}\le \mathsf{L}^{\f}_{c,p}\big\|f\big\|_{p},\quad f\in L_p(\real^d).
 \eeq

\begin{thm}\label{fML}
 Let  $\f\in \H_{\e, \d}$ and $1<p<\8$. Then the two best constants $\mathsf{L}^{\f}_{c,p}$ and $\mathsf{L}^{\f}_{t,p}$ in the above inequalities satisfy
 \begin{enumerate}[\rm(i)]
 \item
  $\mathsf{L}^{\f}_{c,p}\les_{d, \e, \d}\max\big(\sqrt p,\, p'\big).$
 \item
  $\mathsf{L}^{\f}_{t,p}\les_{d, \e, \d}\,p$
  if additionally $\f$ is nondegenerate.
  \end{enumerate}
  \end{thm}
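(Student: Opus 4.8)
The plan is to view $G^{\f}$ as a Hilbert-space valued Calder\'on--Zygmund operator. Writing $\Phi^{\f}f=(\f_t*f)_{t>0}$, we have $G^{\f}(f)(x)=\big\|\Phi^{\f}f(x)\big\|_{L_2((0,\infty),dt/t)}$, where $\Phi^{\f}$ is linear with kernel $x\mapsto(\f_t(x))_{t>0}\in L_2(dt/t)$. A routine rescaling of $t$ against $|x|$ converts the size and H\"older bounds in \eqref{Holder} into the Calder\'on--Zygmund estimates $\big\|(\f_t(x))_t\big\|_{L_2(dt/t)}\les_{d,\e}|x|^{-d}$ and $\big\|(\f_t(x)-\f_t(x'))_t\big\|_{L_2(dt/t)}\les_{d,\e,\d}|x-x'|^{\d'}|x|^{-d-\d'}$ for $|x|>2|x-x'|$, with $\d'=\min(\d,\e)$. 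For the $L_2$ endpoint I would use Plancherel: $\|G^{\f}(f)\|_2^2=\int_{\real^d}|\wh f(\xi)|^2\big(\int_0^\infty|\wh\f(t\xi)|^2\,dt/t\big)\,d\xi$, and the inner integral is bounded (and bounded below, under nondegeneracy) uniformly in $\xi$ because \eqref{Holder} forces $|\wh\f(\eta)|\les_{d,\e,\d}\min\big(|\eta|^{\min(\e,1)},|\eta|^{-\d}\big)$ --- the bound near $0$ from the cancellation $\int\f=0$ together with the size estimate, the bound at $\infty$ from the H\"older estimate via the standard half-period shift.

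For part (i), near $p=1$ I would run the usual vector-valued Calder\'on--Zygmund argument: the kernel bounds above together with the $L_2$-boundedness yield the weak type $(1,1)$ inequality for $G^{\f}$ with constant $\les_{d,\e,\d}1$, and Marcinkiewicz interpolation against the $L_2$ bound gives $\mathsf{L}^{\f}_{c,p}\les_{d,\e,\d}p'$ for $1<p\le2$, which is $\max(\sqrt p,p')$ in that range. For $2\le p<\infty$ the real point --- and the main obstacle in (i) --- is the improvement from $p$ (all that generic CZ or Mikhlin multiplier theory delivers here) down to $\sqrt p$. To get it one must exploit the square-function structure. I would first establish the Carleson-measure endpoint estimate: $d\nu_f=|\f_t*f(x)|^2\,\tfrac{dx\,dt}{t}$ is a Carleson measure with intensity $\les_{d,\e,\d}\|f\|_\infty^2$ (split $f$ over a triple cube: the local part by the $L_2$ bound, the far part by the decay $|\f(w)|\les(1+|w|)^{-d-\e}$), and then interpolate this endpoint with the $L_2$ bound. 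Equivalently, and perhaps more transparently for extracting the exponent $\sqrt p$, one passes through a good-$\lambda$ inequality with sub-Gaussian gain (Chang--Wilson--Wolff type) for $G^{\f}$, or dominates $G^{\f}(f)$ in $L_p$ by an average over dyadic grids of dyadic square functions, where Burkholder's sharp martingale inequality produces exactly the $\sqrt p$ growth.

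For part (ii) I would start from the Calder\'on reproducing formula provided by nondegeneracy: \eqref{reproduce} says $\wh f(\xi)=\int_0^\infty\wh{\p_t*\f_t*f}(\xi)\,dt/t$, hence $f=\int_0^\infty\p_t*\f_t*f\,\tfrac{dt}{t}$ for $f$ in a suitable dense class. Testing against $g\in L_{p'}$ and transferring one convolution factor gives $\langle f,g\rangle=\int_0^\infty\langle\f_t*f,\wt\p_t*g\rangle\,\tfrac{dt}{t}$ with $\wt\p(x)=\overline{\p(-x)}\in\H_{\e,\d}$; Cauchy--Schwarz in $(x,t)$ and then H\"older in $x$ bound this by $\|G^{\f}(f)\|_p\,\|G^{\wt\p}(g)\|_{p'}$. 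Applying part (i) to $\wt\p$ at the exponent $p'$ gives $\|G^{\wt\p}(g)\|_{p'}\les_{d,\e,\d}\max(\sqrt{p'},p)\,\|g\|_{p'}$, so $\mathsf{L}^{\f}_{t,p}\les_{d,\e,\d}\max(\sqrt{p'},p)$. For $p\ge2$ the right-hand side is $p$ and we are done.

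The remaining obstacle is the range $1<p<2$, where the above reasoning only produces $\sqrt{p'}$, which blows up as $p\to1$, whereas the asserted bound there is $\les p$, i.e. $O(1)$. Bare duality is genuinely lossy here (for the Poisson-type $\f$, part (i) at $p'$ is sharp of order $\sqrt{p'}$), so one needs a direct argument. I would control the nontangential maximal function of $f$ by $G^{\f}(f)$ through a good-$\lambda$ inequality (equivalently, compare $f$ with a dyadic/martingale square function and invoke the sharp lower $L_p$ martingale inequality, whose constant is $O(1)$ for $p\le2$), or transfer Stein's subharmonicity/Green-function argument from the Poisson case. Any of these in fact gives the stronger estimate $\mathsf{L}^{\f}_{t,p}\les\sqrt p$ for $p\ge2$ and $\mathsf{L}^{\f}_{t,p}\les1$ for $1<p\le2$, comfortably inside the claimed $\les p$.
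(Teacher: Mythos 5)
Your outline for part (i) and for part (ii) with $p>2$ is correct and matches the paper's route: part (i) for $p\ge 2$ is Wilson's weighted bound for the intrinsic square function (which the paper cites from \cite{Wilson08} rather than reproving), for $1<p\le2$ it is vector-valued Calder\'on--Zygmund theory with the kernel bounds you wrote, and part (ii) for $p>2$ is the Calder\'on reproducing formula together with duality, giving $\mathsf{L}^{\f}_{t,p}\les_{d,\e,\d}\mathsf{L}^{\wt\p}_{c,p'}\les_{d,\e,\d}\max(\sqrt{p'},\,p)=p$.

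The genuinely new content is the range $1<p<2$ in part (ii), and here your proposal is a list of heuristics rather than a proof. Two of the routes you mention --- a good-$\lambda$ inequality between a nontangential maximal function and $G^{\f}(f)$, and Stein's subharmonicity/Green-function argument --- rest on harmonicity of the Poisson integral, which a general nondegenerate $\f\in\H_{\e,\d}$ does not enjoy; they would have to be replaced by something running through the reproducing formula with constants tracked, which is exactly the nontrivial work. The martingale-comparison route is closer to implementable, but the paper's actual argument is different: it establishes a quantitative $H_p$--$\mathrm{BMO}_{p'}$ duality (Lemma~\ref{duality2}) and, following Mei, proves the estimate
\begin{equation*}
\Big|\int_{\real^d}fg\Big|\les_{d,\e,\d}\big\|G^{\f}(f)\big\|_p^{p/2}\big\|f\big\|_{H_p}^{1-p/2}\big\|g\big\|_{\mathrm{BMO}_{p'}}
\end{equation*}
(Lemma~\ref{duality1}) by pairing against a truncated square function $G(x,t)$, dominating $G(x,t)$ pointwise by a discretized intrinsic area function $\mathbb{S}(x,k)$, and summing by parts against the Carleson maximal function $C^{\p}(g)$. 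This yields $\|f\|_{H_p}\les_{d,\e,\d}(2-p)^{-2/p}\|G^{\f}(f)\|_p$, hence $\mathsf{L}^{\f}_{t,p}\les_{d,\e,\d}1$ for $p\le3/2$, and the window $3/2<p<2$ is closed by duality since $\sqrt{p'}\les1$ there. Finally, your closing sentence overclaims: you assert that your methods yield $\mathsf{L}^{\f}_{t,p}\les\sqrt p$ for $p\ge2$. That bound is precisely Problem~\ref{pb on LP} in the paper and is explicitly stated to be open even for the classical Poisson and heat kernels. You are very likely conflating the $g$-function $G^{\f}$ with the Lusin area function $S_\a^{\f}$: the Chang--Wilson--Wolff good-$\lambda$/exponential-square machinery does give $\mathsf{L}^{\f}_{t,p,S}\les\sqrt p$ for the area function (this is \eqref{S-LP}), but the corresponding improvement from $p$ to $\sqrt p$ for the $g$-function is not known.
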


Assertion (ii) above implies that  the first inequality of \eqref{G-LP} holds for $p=1$ too. In fact, our proof of Theorem~\ref{fML} implies that the norm $ \big\|G^{\f}(f)\big\|_{L_1(\real^d)}$ is equivalent to the $H_1$-norm of $f$. More precisely, we have the following corollary. Note that there exist several equivalent definitions of  the norm of $H_p(\real^d)$;  the one used in this paper is defined in terms of the $L_p$-norm of the nontangential maximal function of the Poisson integral of a function $f\in H_p(\real^d)$ (cf. \cite{FS}).

\begin{cor}\label{H1}
Let $\f\in \H_{\e, \d}$ be nondegenerate. Then
 $$\big\|G^{\f}(f)\big\|_{1}\approx_{d, \e,\d} \big\|f\big\|_{H_1},\quad f\in H_1(\real^d).$$
Moreover, for any $\p\in  \H_{\e, \d}$ and $1\le p\le 2,\;\a>0$, we have
 $$\max\big\{\big\|G^{\p}(f)\big\|_p,\, \big\|S_\a^{\p}(f)\big\|_p\big\}\les_{d, \e,\d,\a} \big\|G^{\f}(f)\big\|_p,$$
where $S_\a^{\p}(f)$ is the Lusin area integral function:
  \beq\label{S-function}
 S_\a^{\p}(f)(x)=\Big(\int_{|y-x|<\a t}|\p_t*f(x)|^2\,\frac{dy\,dt}{t^{d+1}}\Big)^{\frac12}, \quad x\in\real^d.
 \eeq
\end{cor}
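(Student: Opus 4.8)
The plan is to derive everything from Theorem~\ref{fML} together with the standard characterizations of $H_1(\real^d)$ via square functions and the nontangential maximal function. For the equivalence $\|G^{\f}(f)\|_1 \approx \|f\|_{H_1}$, I would argue both inequalities separately. For the upper bound $\|G^{\f}(f)\|_1 \les \|f\|_{H_1}$, recall that $f \in H_1(\real^d)$ admits an atomic decomposition $f = \sum_j \l_j a_j$ with $\sum_j |\l_j| \approx \|f\|_{H_1}$; by sublinearity of $G^{\f}$ it suffices to prove a uniform bound $\|G^{\f}(a)\|_1 \les_{d,\e,\d} 1$ for an arbitrary $H_1$-atom $a$ supported in a ball $B$. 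This is the classical Calder\'on--Zygmund type estimate: over $2B$ one uses Cauchy--Schwarz and the $L_2$-bound $\|G^{\f}(a)\|_2 \les \|a\|_2$ (which follows from Plancherel and \eqref{G-LP} for $p=2$, or directly from \eqref{Holder}); away from $2B$ one exploits the cancellation $\int a = 0$ together with the H\"older/decay estimates in \eqref{Holder} on $\f_t$ to get pointwise decay of $G^{\f}(a)(x)$ that is integrable over $(2B)^c$. This is a routine, well-documented computation, so I would only indicate it briefly.

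For the reverse bound $\|f\|_{H_1} \les \|G^{\f}(f)\|_1$, the cleanest route is to invoke assertion~(ii) of Theorem~\ref{fML}, but that only gives the $L_p$-statement for $p>1$; the point of the corollary is the endpoint $p=1$. Here the nondegeneracy of $\f$ is essential: choosing $\p \in \H_{\e,\d}$ satisfying the reproducing formula \eqref{reproduce}, one has the Calder\'on reproducing identity $f = \int_0^\8 \p_t * \f_t * f \, \frac{dt}{t}$ (in a suitable distributional sense), which lets one reconstruct $f$ from $\f_t * f$. Then I would show $\|f\|_{H_1} \les \|S_\a^{\f}(f)\|_1$ for some (hence every) aperture $\a$, via the standard tent-space / good-$\l$ argument relating the area integral to the nontangential Poisson maximal function (this is exactly the Fefferman--Stein circle of ideas, \cite{FS}), and then compare $S_\a^{\f}$ with $G^{\f}$. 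The inequality $\|S_\a^{\f}(f)\|_p \les_\a \|G^{\f}(f)\|_p$ for $p \le 2$ is itself the "moreover" part of the corollary applied with $\p = \f$, so logically I would prove that comparison first.

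For the second assertion, $\max\{\|G^{\p}(f)\|_p, \|S_\a^{\p}(f)\|_p\} \les_{d,\e,\d,\a} \|G^{\f}(f)\|_p$ for $1 \le p \le 2$, the mechanism is again the reproducing formula: write $\p_t * f = \int_0^\8 \p_t * \eta_s * \f_s * f \, \frac{ds}{s}$ where $\eta \in \H_{\e,\d}$ is a partner of $\f$ from \eqref{reproduce}. The kernel $\p_t * \eta_s$ enjoys the standard "almost orthogonality" bounds (a $\min((t/s)^{a},(s/t)^{a})$-type decay coming from the cancellation and H\"older conditions \eqref{Holder}), so Schur-type estimates on the half-line with the measure $\frac{dt}{t}$ convert the $\ell^2(\frac{dt}{t})$-norm of $\p_t*f$ into that of $\f_s*f$, pointwise in $x$ after a vector-valued Minkowski/Cauchy--Schwarz step; this controls $G^{\p}(f)$ pointwise (up to the maximal-function-free version) by something dominated by $G^{\f}$ in $L_p$. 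For $S_\a^{\p}$ one runs the same argument but carries the spatial integral over the cone, using that the cone-averaged square function is comparable to a vertical one after the almost-orthogonality smoothing, or alternatively bounds $S_\a^{\p}(f)$ by $G^{\tilde\p}(f)$ for a slightly enlarged test function $\tilde\p$ and reduces to the first case; the restriction $p \le 2$ enters because for such $p$ one can afford the crude pointwise/Minkowski passage without losing, whereas for $p > 2$ a genuine duality or interpolation step would be needed.

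The main obstacle I anticipate is the endpoint $p=1$ reconstruction $\|f\|_{H_1} \les \|G^{\f}(f)\|_1$: unlike the $L_p$ estimates for $p>1$ which come packaged in Theorem~\ref{fML}(ii), at $p=1$ one cannot use boundedness of maximal operators or vector-valued singular integrals directly, and must instead run a good-$\l$ inequality (or a tent-space duality argument) comparing the area integral of $\f$ to the nontangential Poisson maximal function that defines the $H_1$-norm. Making the passage from the $\f$-area integral to the Poisson one clean requires the reproducing identity \eqref{reproduce} plus the decay/H\"older hypotheses \eqref{Holder} to control the relevant kernel convolutions; everything else (atoms, Plancherel, Schur on the half-line) is standard.
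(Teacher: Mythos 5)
Your route is genuinely different from the paper's, and it has a concrete gap in the part everything else leans on. The paper does not prove $\|f\|_{H_1}\les\|G^\f(f)\|_1$ by good-$\lambda$/tent-space machinery: it runs a $\BMO_{p'}$-duality argument (Lemmas~\ref{duality1} and \ref{duality2}, in the spirit of Mei's operator-valued Hardy spaces), which gives $\|f\|_{H_p}\les\|G^\f(f)\|_p$ directly and uniformly for $1\le p<2$, with no separate endpoint argument. The ``moreover'' part is then a byproduct of sandwiching through $H_p$: $\|G^\p(f)\|_p\les\|f\|_{H_p}$ by Lemma~\ref{CZ} (vector-valued Calder\'on--Zygmund theory), $\|S_\a^\p(f)\|_p\les\|S_{\e,\d}(f)\|_p\les\|f\|_{H_p}$ by Lemma~\ref{CZb} (Wilson's intrinsic square function is again a vector-valued CZ operator), and both are $\les\|G^\f(f)\|_p$ by the duality estimate. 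Your plan forces you to prove the ``moreover'' comparison first and then feed it into the endpoint reconstruction, which is logically fine, but it means the Schur step must actually work at $p=1$ — and it does not, as sketched.

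The Calder\'on/Schur/almost-orthogonality argument you outline never yields a pointwise bound $G^\p(f)(x)\les G^\f(f)(x)$. Writing $\p_t*f=\int_0^\8(\p_t*\eta_s)*(\f_s*f)\,\frac{ds}{s}$ and using the almost-orthogonality decay of $\p_t*\eta_s$, the spatial convolution forces an average of $|\f_s*f|$ over a ball of radius $\max(t,s)$ around $x$; after Cauchy--Schwarz on $\frac{ds}{s}$ and on the spatial integral, the cleanest pointwise estimate one reaches is of the form $G^\p(f)(x)^2\les\int_0^\8 A_s\big(|\f_s*f|^2\big)(x)\,\frac{ds}{s}$ with $A_s$ a Poisson-type average at scale $s$. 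Passing to $L_p$ this controls $\|G^\p(f)\|_p^2$ by $\|M\big(G^\f(f)^2\big)\|_{p/2}$, which needs the Hardy--Littlewood maximal operator bounded on $L_{p/2}$, i.e.\ $p>2$. So the sentence ``the restriction $p\le2$ enters because for such $p$ one can afford the crude pointwise/Minkowski passage'' is exactly backwards: the crude passage works only for $p>2$ and fails for $p\le2$, the very range the corollary is stated in, and at $p=1$ it fails badly. The standard repair (Frazier--Jawerth): replace $M$ by $M(|\cdot|^r)^{1/r}$ with $r<\min(p,2)$ via Peetre maximal functions and invoke the vector-valued Fefferman--Stein maximal inequality — a substantially heavier step than what you describe, and one that you would need to spell out. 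The paper avoids all of this by never attempting a direct $G^\p\les G^\f$ comparison; it routes through $H_p$, where the $p\le2$ bounds for $G^\p$ and $S^\p_\a$ come cheaply from CZ theory and the reverse bound $H_p\les G^\f$ comes from the BMO-duality lemma.
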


\begin{rk}
 All the previous results hold for the area integral function defined by \eqref{S-function}. In fact, a majority of the literature on the Littlewood-Paley theory deals with  $S^{\f}(f)$ instead of $G^{\f}(f)$.  Namely, if $\f$ is a nondegenerate function in $\H_{\e, \d}$, then for any $1<p<\8$
 $$\big(\mathsf{L}^{\f}_{t,p,S}\big)^{-1}\big\|f\big\|_{p}\le \big\|S_\a^{\f}(f)\big\|_{p}\le \mathsf{L}^{\f}_{c,p, S}\big\|f\big\|_{p},\quad f\in L_p(\real^d).$$
The best constants $\mathsf{L}^{\f}_{c,p, S}$ and $\mathsf{L}^{\f}_{t,p, S}$ satisfy
 \beq\label{S-LP}
 \mathsf{L}^{\f}_{c,p, S}\les_{d, \e, \d,\a}\max\big(\sqrt p,\, p'\big)\;\text{ and }\;\mathsf{L}^{\f}_{t,p, S}\les_{d, \e, \d,\a} \sqrt p\,.
 \eeq
The first one is well known. As far as for the second,  the case $p\le2$ is classical for sufficiently nice $\f$, for instance, for the Poisson kernel. The case $p>2$ is implicitly contained in \cite{CWW}, it can also be found in \cite{Wilson08}  if the aperture $\a$ is large enough, say $\a\ge 3\sqrt d$; for $\a<3\sqrt d$, one can adapt Wilson's argument. See \cite{CDWL, FP, GY} for related results.

Considering the Poisson kernel as in Theorem~\ref{PH ML}, we see that the orders of the constants  in \eqref{S-LP} are optimal as $p\to1$ and $p\to\8$. Thus the optimal orders of the constants  in \eqref{S-LP} are completely determined.
 \end{rk}

 Compared with the area integral function discussed in the above remark, the situation for the $g$-function is more delicate. At the time of this writing, we are unable to determine the optimal orders of $\mathsf{L}^{\mathbb{P}}_{t, p}$ and $\mathsf{L}^{\mathbb{H}}_{t, p}$ as $p\to\8$. The following problem is closely related to the one mentioned on page~239 of \cite{CWW}.

 \begin{problem}\label{pb on LP}
  Let $\f\in \H_{\e, \d}$ be nondegenerate. Does one have $\mathsf{L}^{\f}_{t, p}\les_{d,\e,\d} \sqrt p$ for $2\le p<\8$? In particular, does this hold for the classical Poisson or heat kernel on $\real^d$?
  \end{problem}

 \begin{rk}
  We determined  in \cite{HFC-LPS} the optimal orders of the best constants in \eqref{CLPt} for more general semigroups. Namely, $\{\mathbb{P}_t\}_{t>0}$ in  \eqref{CLPt} can be replaced by the Poisson semigroup $\{P_t\}_{t>0}$ subordinated to any strongly continuous semigroup $\{T_t\}_{t>0}$  of regular contractions on $L_p(\O)$ for a fixed $1<p<\8$. The corresponding constants $\mathsf{L}^{P}_{c, p}$ and $\mathsf{L}^{P}_{t, p}$ also satisfy Theorem~\ref{PH ML}, except assertion (ii) (which is an open problem). We refer to  \cite{HFC-LPS}  for more details. In the work \cite{XZ}, Zhendong Xu and Hao Zhang proved that the optimal order of $\mathsf{L}^{P}_{t, p}$ is $p$ as $p\to\8$ for symmetric markovian semigroups. Thus the previous problem has a negative solution if the classical Poisson or heat semigroup is replaced by a general symmetric markovian semigroup.
 \end{rk}

 It would be also  interesting to have dimension free estimates for $\mathsf{L}^{\mathbb{H}}_{c,p}$ in Theorem~\ref{PH ML} (i):

 \begin{problem}
 Does one have $\mathsf{L}^{\mathbb{H}}_{c,p}\les \max(\sqrt p,\,p')$?
  \end{problem}

Meyer \cite{Meyer} shows that this question has an affirmative answer for $p>2$ if the time derivative in the definition of the $g$-function $G^{\mathbb{H}}$ is replaced  by the spatial derivative. More precisely, let
 $$G^{\mathbb{H}}_{\nabla_x}(f)(x)=\Big(\int_0^\8|\nabla_x\mathbb{H}_t(f)(x)|^2dt\Big)^{\frac12}\,, $$
where $\nabla_x=(\frac{\partial}{\partial x_1},\cdots, \frac{\partial}{\partial x_d})$. Then
 $$\big\|G^{\mathbb{H}}_{\nabla_x}(f)\big\|_p\les \sqrt p\, \big\|f\big\|_p,\quad f\in L_p(\real^d), \;2\le p<\8.$$

\begin{rk}\label{RT}
 All the previous results admit periodic analogues, that is, for $\T^d$ in place of $\real^d$, where $\T^d$ is the $d$-torus equipped with normalized Haar measure. This can be done by modifying the arguments for $\real^d$, or more simply by a de Leeuw type transference principle. We will show a variant of de Leeuw's theorem in section~\ref{A variant of de Leeuw's multiplier theorem}. Using this de Leeuw type theorem, as illustration we will explain in section~\ref{Proof of Theorem PH ML} why the constant $\mathsf{L}^{\mathbb P}_{c, p}$ and its period analogue are essentially the same. A full argument can be found  in the proof of Theorem~\ref{RvsT} below.
\end{rk}

\medskip\n{\bf Historical comments.} We make  some remarks on the previous results and especially point out the part of them which are  known or implicit in the literature.

 \begin{enumerate}[(i)]
 \item The classical Littlewood-Paley $g$-function is usually defined by using the full gradient of $\mathbb{P}_t(f)$ in place of the partial derivative in the time variable. Let us denote the latter by $G_\nabla^{\mathbb{P}}(f)$:
 \beq\label{gradient G}
 G_\nabla^{\mathbb{P}}(f)=\Big(\int_0^\8t|\nabla\mathbb{P}_t(f)|^2dt\Big)^{\frac12}\,.
 \eeq
This square function behaves better than the previous one since it is invariant under the Riesz transforms. Theorem~\ref{PH ML} equally holds for $G_\nabla^{\mathbb{P}}$ in place of $G^{\mathbb{P}}$. The corresponding proof is slightly simpler (see the related remark on page~ 172 of \cite{FS}).

\item Part (i) of Theorem~\ref{fML} is known and can be found in Wilson's book \cite{Wilson08}. In fact, Wilson shows a stronger result on his intrinsic $g$-function that is defined by
 $$G_{\e, \d}(f)(x)=\Big(\int_{0}^\8\,\sup_{\f\in\H_{\e, \d}} |\f_t*f(x)|^2\,\frac{dt}{t}\Big)^{\frac12}.$$
Then for any weight $w$ on $\real^d$ and $f\in L_2(\real^d)$,
 $$\int_{\real^d} G_{\e, \d}(f)^2w \les_{d, \e, \d} \int_{\real^d} |f|^2M(w),$$
 where $M(w)$ denotes the Hardy-Littlewood maximal function of $w$. This implies that for $2\le p<\8$
 $$\big\|G_{\e, \d}(f)\big\|_{p} \les_{d, \e, \d}\sqrt{p}\,\big\|f\big\|_{p},$$
whence $\mathsf{L}^{\f}_{c,p}\les_{d, \e, \d}\sqrt p$ for $p\ge 2$. The case $p<2$ is dealt with by a standard argument involving singular integral theory for $G^\f$ can be expressed as a Calder\'on-Zygmund operator with Hilbert space valued kernel (see Lemma~\ref{CZ} below).

\item  Part (ii) of Theorem~\ref{fML} for $p>2$ immediately follows from  part (i) by duality.

\item The upper estimate $\big\|G^{\f}(f)\big\|_{L_1(\real^d)}\les_{d, \e,\d} \big\|f\big\|_{H_1(\real^d)}$ in Corollary~\ref{H1} is also well known for the same reason via singular integral theory. The converse inequality is classical too for the Poisson kernel.

\item Wilson's theorem just quoted implies particularly $\mathsf{L}^{\mathbb{P}}_{c,p}\les_{d}\sqrt p$ for $p\ge 2$. However, the dimension free estimate $\mathsf{L}^{\mathbb{P}}_{c,p}\les\sqrt p$
in Theorem~\ref{PH ML} (i) is due to Meyer \cite{Meyer} by probabilistic method. In \cite{HFC-LPS} we prove a similar result for general semigroups. The dimension free estimate $\mathsf{L}^{\mathbb{P}}_{t,p}\les p$ for $p\ge 2$ in Theorem~\ref{PH ML} (iii) is also a special case of that result of \cite{HFC-LPS}.

\item We can now summarize the new part of Theorems~\ref{PH ML}-\ref{fML} and Corollary~\ref{H1} as follows:
\begin{align*}
&\mathsf{L}^{\mathbb{H}}_{c,p}\ges\mathsf{L}^{\mathbb{P}}_{c,p}\ges \sqrt p\;\text{ and }\; \mathsf{L}^{\mathbb{P}}_{t, p}\ges \mathsf{L}^{\mathbb{H}}_{t,p}\ges \sqrt p\;\text{ for }\; 2\le p<\8;\\
&\mathsf{L}^{\mathbb{P}}_{c,p}\les p' \;\text{ for }\; 1< p\le2\;\text{ and }\; \mathsf{L}^{\mathbb{P}}_{t,p}\les p \;\text{ for }\; p\ge 2 \;\; (\text{dimension freeness)};\\
&\big\|f\big\|_{H_1}\les_{d, \e,\d} \big\|G^{\f}(f)\big\|_1 \;\text{ for any nondegenerate }\;\f\in\H_{\e,\d}.
 \end{align*}
 \end{enumerate}

\medskip

\subsection{Littlewood-Paley dyadic square function inequality}

There exists another equally famous inequality named after Littlewood-Paley, the one related to the dyadic decomposition of $\real^d$. First, partition $\real\setminus\{0\}$ into the intervals $[2^{k-1},\, 2^{k})$ and $(-2^{k},\, -2^{k-1}]$, $k\in\ent$. Then the family $\Delta$ of all $d$-fold products of these intervals give a partition of $\real^d$ (deprived of the origin). For any $R\in\Delta$ let $S_R$ be the corresponding partial sum operator, that is, $\wh{S_R(f)}=\un_{R}\wh f$. The dyadic  Littlewood-Paley square function of $f$ is defined by
 $$S^\Delta(f)=\Big(\sum_{R\in\Delta} |S_R(f)|^2\Big)^{\frac12}.$$
It is well known that for $1<p<\8$ there exist two positive constants $\mathsf{L}^{\Delta}_{c,p, d}$ and $ \mathsf{L}^{\Delta}_{t,p, d}$ such that
\beq\label{DLP-R}
 \big(\mathsf{L}^{\Delta}_{t,p, d}\big)^{-1}\big\|f\big\|_{p}\le \big\|S^\Delta(f)\big\|_{p}\le \mathsf{L}^{\Delta}_{c,p, d}\big\|f\big\|_{p},\quad f\in L_p(\real^d).
 \eeq
See, for instance, \cite[Theorem~5.1.6]{Grafakos} where it is showed that both constants $\mathsf{L}^{\Delta}_{c,p, d}$ and $\mathsf{L}^{\Delta}_{t,p, d}$ are majorized by $C_d \max(p, p')^{2d}$. However, like for the $g$-function inequality, their optimal orders have not been completely determined in the literature. Note that the Littlewood-Paley $g$-function inequality belongs to one-parameter harmonic analysis while the above inequality is of multi-parameter nature. This explains why we now mention $d$ explicitly as a subscript in the above constants.

\smallskip

In the spirit of Remark~\ref{RT}, we formulate the periodic counterpart of \eqref{DLP-R}. The only difference is that the dyadic rectangles now consist of integers, so the corresponding dyadic partition of $\ent^d$ is $\wt\Delta=\{R\cap\ent^d\,:\, R\in\Delta\}$.  We similarly define the partial sum operators $S_R$ and
 $$S^{\wt\Delta}(f)=\Big(\sum_{R\in\wt\Delta} |S_R(f)|^2\Big)^{\frac12},\quad f\in L_p(\T^d).$$
So \eqref{DLP-R} becomes
\beq\label{DLP-T}
 \big(\mathsf{L}^{\wt\Delta}_{t,p, d}\big)^{-1}\big\|f\big\|_{p}\le \big\|S^{\wt\Delta}(f)\big\|_{p}\le \mathsf{L}^{\wt\Delta}_{c,p, d}\big\|f\big\|_{p},\quad f\in L_p(\T^d).
 \eeq

The following reinforces the meaning of Remark~\ref{RT}.

\begin{thm}\label{RvsT}
 Let $1<p<\8$. Then the best constants in \eqref{DLP-R} and \eqref{DLP-T} satisfy
 $$\mathsf{L}^{\Delta}_{c,p, d}=\mathsf{L}^{\wt\Delta}_{c,p, d}\quad\text{ and }\quad \mathsf{L}^{\Delta}_{t,p, d}=\mathsf{L}^{\wt\Delta}_{t,p, d}.$$
\end{thm}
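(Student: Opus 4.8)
\medskip\noindent\emph{Proof strategy.} The plan is to express each of the four constants as the norm of a vector‑ or operator‑valued Fourier multiplier and then transfer between $\real^d$ and $\T^d$ by a de Leeuw type principle, the dyadic self‑similarity of $\Delta$ playing a crucial role. First I would reformulate: $f\mapsto(S_R f)_{R\in\Delta}$ is the Fourier multiplier on $L_p(\real^d)$ with $\ell_2(\Delta)$‑valued symbol $m(\xi)=(\un_R(\xi))_{R\in\Delta}$, and by definition its norm from $L_p(\real^d)$ to $L_p(\real^d;\ell_2(\Delta))$ is $\mathsf L^{\Delta}_{c,p,d}$; since $\sum_{R\in\Delta}S_R=\mathrm{id}$ and the $S_R$ are pairwise disjoint self‑adjoint projections, $\mathsf L^{\Delta}_{t,p,d}$ is the norm of the adjoint multiplier $(F_R)_R\mapsto\sum_R S_R F_R$ from $L_p(\real^d;\ell_2(\Delta))$ to $L_p(\real^d)$. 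The periodic constants $\mathsf L^{\wt\Delta}_{c,p,d},\mathsf L^{\wt\Delta}_{t,p,d}$ are the analogous multiplier norms on the torus, with symbol the restriction $m|_{\ent^d}$. Hence it suffices to show, for each of these two symbols $\Phi$, that $\|\Phi\|_{M_p(\real^d)}=\|\Phi|_{\ent^d}\|_{M_p(\T^d)}$.

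For the inequality $\mathsf L^{\Delta}\le\mathsf L^{\wt\Delta}$ I would periodise. Fix $f$ with $\wh f\in C^\8_c$ supported away from the ``dyadic grid'' $\Gamma=\bigcup_{1\le j\le d,\,k\in\ent}\{\xi:\xi_j=\pm2^k\}$; such $f$ are dense in $L_p(\real^d)$ for $1<p<\8$, and since $S^\Delta$ is bounded on $L_p$ it suffices to treat them. Put $N=2^J$ and $f_N=\sum_{k\in\ent^d}f(\,\cdot+Nk)$ on $(\real/N\ent)^d$ with Lebesgue measure. Rescaling this torus by $N^{-1}$ carries its dual lattice $N^{-1}\ent^d$ onto $\ent^d$ and, because $N$ is a power of $2$ (so $\{2^JR:R\in\Delta\}=\Delta$), carries the partition $\{R\cap N^{-1}\ent^d:R\in\Delta\}$ onto $\wt\Delta$; hence the dyadic square function on $(\real/N\ent)^d$ obeys \eqref{DLP-T} with the \emph{same} constants $\mathsf L^{\wt\Delta}_{c,p,d}$, $\mathsf L^{\wt\Delta}_{t,p,d}$. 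A Poisson summation / Riemann sum computation --- legitimate because $\wh f$ is compactly supported off $\Gamma$, so only finitely many $R$ occur and each $\un_R$ is locally constant on $\operatorname{supp}\wh f$ --- gives $\|f_N\|_{L_p}\to\|f\|_{L_p(\real^d)}$ and $\|S^{(\mathrm{dyadic})}f_N\|_{L_p}\to\|S^\Delta f\|_{L_p(\real^d)}$ as $J\to\8$. Applying \eqref{DLP-T} to $f_N$ and letting $J\to\8$ yields $\mathsf L^{\Delta}_{c,p,d}\le\mathsf L^{\wt\Delta}_{c,p,d}$ and $\mathsf L^{\Delta}_{t,p,d}\le\mathsf L^{\wt\Delta}_{t,p,d}$.

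For the reverse inequality $\mathsf L^{\wt\Delta}\le\mathsf L^{\Delta}$ I would invoke the vector‑valued de Leeuw type theorem of Section~\ref{A variant of de Leeuw's multiplier theorem}, applied to the operator‑valued symbol $\Phi$. The point requiring care is that $\Phi$ is discontinuous exactly on the null set $\Gamma$, which meets $\ent^d$, so a naive restriction theorem does not apply directly. Dyadic self‑similarity repairs this: for every $J\ge1$ the dilation $\xi\mapsto2^{-J}\xi$ is, after the relabelling $R\mapsto2^{-J}R$, a symmetry of the whole configuration (the partition $\Delta$ together with the half‑open convention defining it), so $\|\Phi(2^{-J}\cdot)\|_{M_p(\real^d)}=\|\Phi\|_{M_p(\real^d)}$, while $\Phi(2^{-J}\cdot)|_{\ent^d}$ equals $\Phi|_{\ent^d}$ up to that relabelling --- including at the exceptional lattice points, whose values are pinned down by the same half‑open convention on both sides. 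Feeding this into the transference principle (stated for operator‑valued symbols, and formulated so that the restriction at discontinuities is governed by the one‑sided / half‑open rule --- precisely the feature that makes it a ``variant'') gives $\|\Phi|_{\ent^d}\|_{M_p(\T^d)}\le\|\Phi\|_{M_p(\real^d)}$, i.e. $\mathsf L^{\wt\Delta}_{c,p,d}\le\mathsf L^{\Delta}_{c,p,d}$ and $\mathsf L^{\wt\Delta}_{t,p,d}\le\mathsf L^{\Delta}_{t,p,d}$. Together with the preceding paragraph this proves the theorem.

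The step I expect to be the main obstacle is exactly this last transference: because the dyadic multiplier is discontinuous on $\Gamma$ one cannot simply quote the classical continuous‑symbol de Leeuw theorem, and any dilation or translation of $\ent^d$ that keeps it off $\Gamma$ inevitably perturbs the induced partition of $\ent^d$ in an unrecoverable way (a boundary integer such as $2^k-1$ is eventually pushed across a wall), so the restriction one obtains is some ``shifted dyadic'' symbol rather than $m|_{\ent^d}$ itself. The two devices above --- the honest $2^{-J}$‑dilation combined with a de Leeuw theorem robust enough to respect the half‑open boundary convention, and on the other side a concrete periodisation over grid‑avoiding test functions --- are what force an \emph{equality} of constants rather than a mere equivalence. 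A subsidiary technical point, routine but worth checking separately, is the $L_p$‑density for $1<p<\8$ of the test functions whose Fourier transform avoids $\Gamma$.
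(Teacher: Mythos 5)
Your overall scheme (recast each constant as the norm of an $\ell_2(\Delta)$-valued Fourier multiplier with symbol $\f=\sum_{R\in\Delta}\un_R\,e_R$, then transfer between $\real^d$ and $\T^d$) matches the paper, and your periodisation argument for $\mathsf L^{\Delta}\le\mathsf L^{\wt\Delta}$ is essentially Theorem~\ref{de Leeuw}(ii) and its Lemma~\ref{link2} unwound in this specific case, with the dyadic scale invariance $\|M_{\f^{(2^j)}}\|_{p\to p}=\mathsf L^{\wt\Delta}_{c,p,d}$ playing the same role. That half of your proof is sound.

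The other direction, $\mathsf L^{\wt\Delta}\le\mathsf L^{\Delta}$, has a genuine gap. You appeal to a de Leeuw restriction theorem ``formulated so that the restriction at discontinuities is governed by the one-sided / half-open rule'' and describe this as the feature that makes the paper's theorem a ``variant.'' That is not what Theorem~\ref{de Leeuw}(i) says: the ``variant'' refers to its vector/operator-valued formulation under ({\bf H}$_1$)--({\bf H}$_4$), and part (i) explicitly requires $\f$ to be \emph{strongly continuous at every $m\in\ent^d$}, which the dyadic symbol is not. Moreover, the proof mechanism of Lemma~\ref{link1} --- convolving $\wh{\mathbb H}_t$, a \emph{symmetric} Gaussian, against the symbol near $\xi=m$ --- would, at a jump discontinuity, produce the symmetric average of the two one-sided limits, not the half-open value $\f(m^+)$. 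So a ``one-sided de Leeuw'' of the kind you invoke is not established in the paper, and the most natural attempt to prove it along the paper's lines would actually give the wrong boundary value. Your $2^{-J}$-dilation does nothing to cure this: it keeps $\Gamma\cap\ent^d$ on $\Gamma$, merely relabelling the discontinuities.

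You also dismiss the obvious fix --- dilating by an irrational $t$ to push $\ent^d$ off $\Gamma$ --- on the grounds that it ``inevitably perturbs the induced partition of $\ent^d$ in an unrecoverable way (a boundary integer such as $2^k-1$ is eventually pushed across a wall).'' This is incorrect, and it is precisely the paper's argument. Fix a trigonometric polynomial $P$ with spectrum $F\subset\ent^d$ finite. For $t$ irrational and $1<t<1+\min_{m\in F}\,\mathrm{dist}(m,\text{right wall of its rectangle})/|m|$, every $tm$ ($m\in F$) lies strictly inside the same half-open dyadic rectangle as $m$; in particular $t(2^k-1)\in[2^{k-1},2^k)$ provided $t<2^k/(2^k-1)$, and $t\cdot 2^k\in[2^k,2^{k+1})$ provided $t<2$. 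Hence $M_{\f^{(t)}}(P)=M_\f(P)$ for all such $t$. Since $\f^{(t)}$ \emph{is} strongly continuous on $\ent^d$ for irrational $t$, Theorem~\ref{de Leeuw}(i) gives $\|M_{\f^{(t)}}(P)\|_p\le\|T_{\f^{(t)}}\|_{p\to p}\|P\|_p=\|T_\f\|_{p\to p}\|P\|_p$ by dilation invariance of the Euclidean norm; taking the supremum over $P$ yields $\mathsf L^{\wt\Delta}\le\mathsf L^{\Delta}$. The point you missed is that one only needs $t$ to respect the rectangles meeting the (finite) spectrum of a single $P$ at a time, not all of $\ent^d$ simultaneously. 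As written, your proof does not close without either supplying and proving the one-sided transference theorem (which looks false via the Gaussian route) or replacing that step with the irrational-dilation limit you discarded.
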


\smallskip

The following result illustrates the multi-parameter nature of \eqref{DLP-R} and  \eqref{DLP-T}. In view of the previous theorem, we need to state it only for the periodic case.

\begin{thm}\label{multi-parameter}
 There exists a universal positive constant $C$ such that
 $$\big(\mathsf{L}^{\wt\Delta}_{c,p, 1}\big)^d\le \mathsf{L}^{\wt\Delta}_{c,p, d}\le \big(C\mathsf{L}^{\wt\Delta}_{c,p, 1}\big)^d\quad\text{ and }\quad
 \big(\mathsf{L}^{\wt\Delta}_{t,p, 1}\big)^d\le \mathsf{L}^{\wt\Delta}_{t,p, d}\le \big(C\mathsf{L}^{\wt\Delta}_{t,p, 1}\big)^d.$$
\end{thm}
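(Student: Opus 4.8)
The plan is to treat the left-hand (lower) bounds and the right-hand (upper) bounds of each pair separately: the former are soft and follow from tensor products, while the latter rest on a one-dimensional \emph{Hilbert-space-valued} refinement of the square-function inequality. Throughout, write $\wt\Delta_1$ for the one-dimensional periodic dyadic partition, so that in dimension $d$ a member of $\wt\Delta$ is a product $R=R_1\times\cdots\times R_d$ with $R_i\in\wt\Delta_1$, and $S_R=S^{(1)}_{R_1}\cdots S^{(d)}_{R_d}$, where $S^{(i)}_{R_i}$ acts on the $i$-th variable only; these operators commute and act coordinatewise on vector-valued functions.

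\emph{Lower bounds.} For a product $f=f_1\otimes\cdots\otimes f_d$ on $\T^d$ one has $S_Rf=S_{R_1}f_1\otimes\cdots\otimes S_{R_d}f_d$, and summing the squares over $\wt\Delta$ factors as a product of sums over $\wt\Delta_1$, so $S^{\wt\Delta}(f)(x)=\prod_{i=1}^d S^{\wt\Delta_1}(f_i)(x_i)$; by Fubini $\|S^{\wt\Delta}(f)\|_p=\prod_i\|S^{\wt\Delta_1}(f_i)\|_p$ and $\|f\|_p=\prod_i\|f_i\|_p$. Hence for product functions $\|S^{\wt\Delta}(f)\|_p/\|f\|_p=\prod_i\|S^{\wt\Delta_1}(f_i)\|_p/\|f_i\|_p$; taking all $f_i$ equal to a near-extremiser of the one-dimensional cotype inequality gives $\mathsf{L}^{\wt\Delta}_{c,p,d}\ge(\mathsf{L}^{\wt\Delta}_{c,p,1})^d$, and using the reciprocal ratio one gets $\mathsf{L}^{\wt\Delta}_{t,p,d}\ge(\mathsf{L}^{\wt\Delta}_{t,p,1})^d$.

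\emph{Upper bounds.} Here I would peel the variables off one at a time. For the cotype bound, fix $x_2,\dots,x_d$ and view $h(x_1)=(S^{(2)}_{R_2}\cdots S^{(d)}_{R_d}f(x_1,\dots,x_d))_{R_2,\dots,R_d}$ as an $H$-valued function of $x_1$ with $H=\ell^2(\wt\Delta_1^{d-1})$; since $S^{(1)}_{R_1}$ acts coordinatewise on $H$, one has $S^{\wt\Delta}(f)(x)=(\sum_{R_1}\|S^{(1)}_{R_1}h(x_1)\|_H^2)^{1/2}$. Applying in the variable $x_1$ the \emph{$H$-valued} one-dimensional cotype inequality
\[
\Big\|\Big(\sum_{R\in\wt\Delta_1}\|S_R u\|_H^2\Big)^{1/2}\Big\|_{L_p(\T)}\le C\,\mathsf{L}^{\wt\Delta}_{c,p,1}\,\|u\|_{L_p(\T;H)},
\]
valid for every Hilbert space $H$ with a \emph{universal} constant $C$, and then integrating in $x_2,\dots,x_d$, reduces the $d$-fold square function to the $(d-1)$-fold one at the cost of $C\mathsf{L}^{\wt\Delta}_{c,p,1}$; iterating $d$ times yields $\mathsf{L}^{\wt\Delta}_{c,p,d}\le(C\mathsf{L}^{\wt\Delta}_{c,p,1})^d$. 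For the type bound one argues symmetrically, using at each step the $H$-valued one-dimensional \emph{type} inequality with constant $C\mathsf{L}^{\wt\Delta}_{t,p,1}$ (the first variable can in fact be treated by the scalar inequality applied pointwise in the remaining ones), to get $\mathsf{L}^{\wt\Delta}_{t,p,d}\le(C\mathsf{L}^{\wt\Delta}_{t,p,1})^d$.

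\emph{The key lemma, and the main obstacle.} Everything thus hinges on upgrading the one-dimensional inequalities to Hilbert-valued ones with only a universal overhead: a spurious factor $\sqrt p$ would be fatal, since it gets raised to the $d$-th power. A naive Rademacher randomisation combined with Khintchine's inequality loses exactly such a $\sqrt p$; the remedy is Gaussian randomisation. For $H=\ell^2_m$ and $u=(u_1,\dots,u_m)$ with $u_j\in L_p(\T)$, the map $a\mapsto\g_p^{-1}\sum_j a_jg_j$, with $(g_j)$ i.i.d.\ standard Gaussians and $\g_p=\|g_1\|_{L_p}$, embeds $\ell^2_m$ \emph{isometrically} into $L_p$ of the Gaussian space, for \emph{every} $p$. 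Applying the scalar one-dimensional inequality to $v=\sum_j u_jg_j(\o)$ for each fixed $\o$, raising to the $p$-th power and integrating over $\o$, Fubini turns the left side into $\g_p^p\|u\|_{L_p(\T;\ell^2_m)}^p$, while on the right side one is left with $\int_\T\E_\o\|\Gamma(x)\|_{\ell^2}^p\,dx$ for a Gaussian vector $\Gamma(x)$ with $\E\|\Gamma(x)\|_{\ell^2}^2=\sum_{R}\|S_Ru(x)\|_{\ell^2_m}^2$. The Kahane--Khintchine inequality for Gaussian vectors together with Gaussian concentration bounds $\E\|\Gamma(x)\|_{\ell^2}^p$ by $C_p^p(\E\|\Gamma(x)\|_{\ell^2}^2)^{p/2}$ with $C_p\les\sqrt p$, and since $C_p/\g_p\les1$ the two $\sqrt p$'s cancel, leaving the desired universal constant; the general Hilbert space follows by approximation. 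This is precisely the Marcinkiewicz--Zygmund extension principle made quantitative. The delicate part is to verify that this argument works uniformly for all $1<p<\8$ and produces both the cotype and the type versions — in particular that the relevant Kahane--Khintchine constants stay bounded for $p$ near $1$ and near $2$ — after which the rest of the proof is bookkeeping.
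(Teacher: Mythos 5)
Your overall strategy coincides with the paper's: the lower bounds come from tensor products, and the upper bounds come from peeling one variable at a time using a Hilbert-space-valued extension of the one-dimensional inequality with only a universal overhead. That extension is precisely the Marcinkiewicz--Zygmund lemma the paper isolates (Lemma~\ref{MZ}), proved there by the same Gaussian randomisation you propose. So the architecture is right.

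There is, however, a genuine gap in your proof of the key lemma. Your Gaussian computation is written for the \emph{type} direction, and there the cancellation $C_p/\g_p\les 1$ works: applying the scalar inequality $\|v\|_p\le\mathsf{L}_t\|S(v)\|_p$ to $v=\sum_j u_j g_j(\o)$, integrating in $\o$, the left side yields $\g_p^p\|u\|^p$ while the right side is controlled from above by $C_p^p$ times $(\E\|\Gamma\|^2)^{p/2}$ with $C_p\les\sqrt p\approx\g_p$. But run the same computation with the \emph{cotype} inequality $\|S(v)\|_p\le\mathsf{L}_c\|v\|_p$ for $p>2$: now the right side becomes $\g_p^p\|u\|^p$, and on the left you need a \emph{lower} bound $\E\|\Gamma\|^p\ge(\E\|\Gamma\|^2)^{p/2}$, which for $p\ge 2$ is just Jensen with constant $1$ and nothing to cancel $\g_p$. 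The result is $\|(\sum_R\|S_Ru\|_H^2)^{1/2}\|_p\le\g_p\,\mathsf{L}_c\,\|u\|$ with $\g_p\approx\sqrt p$, which is fatal once raised to the $d$-th power. The paper closes exactly this hole by duality: $\|{\rm I}_K\ot T\|_{p\to p}=\|{\rm I}_K\ot T^*\|_{p'\to p'}$, and since $p'<2$ the direct Gaussian argument applies on the adjoint side without loss. Your proposal never invokes duality, and your concluding remark misplaces the delicate range as $p$ near $1$ or $2$ — in fact the Gaussian Khintchine constants are harmless there; the problem is $p\to\8$ in the cotype direction, and it cannot be repaired without the dualisation step.
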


The following corollary determines the optimal orders of all the best constants (except one) in \eqref{DLP-R}, so as well as of those in \eqref{DLP-T}.

\begin{cor}\label{DLP-const}
 Let $1<p<\8$. Then
 \begin{enumerate}[\rm(i)]
  \item $\mathsf{L}^{\Delta}_{c,p, d}\approx_d p'^{\frac{3}2d}$ for  $1<p\le2$ and $\mathsf{L}^{\Delta}_{c,p, d}\approx_d p^{d}$ for  $2<p<\8$;
 \item $\mathsf{L}^{\Delta}_{t,p, d}\approx_d 1$ for  $1<p\le2$ and
$p^{\frac{d}2}\les\mathsf{L}^{\Delta}_{t,p, d}\les_d p^d$ for $2\le p<\8$.
 \end{enumerate}
 \end{cor}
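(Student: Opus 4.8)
The plan is to push everything down to dimension one and then pin down the four best constants $\mathsf{L}^{\Delta}_{c,p,1}$, $\mathsf{L}^{\Delta}_{t,p,1}$ directly. By Theorem~\ref{RvsT} the Euclidean constants equal their periodic counterparts, and by Theorem~\ref{multi-parameter} the $d$-dimensional periodic constants are $d$-th powers of the one-dimensional ones up to a constant; hence $\mathsf{L}^{\Delta}_{c,p,d}\approx_d(\mathsf{L}^{\Delta}_{c,p,1})^d$ and $\mathsf{L}^{\Delta}_{t,p,d}\approx_d(\mathsf{L}^{\Delta}_{t,p,1})^d$. So the corollary is equivalent to the one-dimensional facts: $\mathsf{L}^{\Delta}_{c,p,1}\approx p'^{3/2}$ for $1<p\le2$ and $\approx p$ for $2<p<\8$; $\mathsf{L}^{\Delta}_{t,p,1}\approx1$ for $1<p\le2$; and $\sqrt p\les\mathsf{L}^{\Delta}_{t,p,1}\les p$ for $2\le p<\8$ (with $\mathsf{L}^{\Delta}_{c,2,1}=\mathsf{L}^{\Delta}_{t,2,1}=1$ by orthogonality). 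From now on I work on $\real$ or $\T$; $S_k$ is the projection onto the $k$-th dyadic block, $\vec S(f)=(S_kf)_k$ (so $S^\Delta(f)=\|\vec S(f)\|_{\el_2}$ pointwise), $V\big((h_k)\big)=\sum_kS_kh_k$, and $M_\e=\sum_k\e_kS_k$ for a sign sequence $\e$. Note the identities $f=V\big((S_kf)\big)$, $S_j\big(V((h_k))\big)=S_jh_j$ (so $S^\Delta\big(V((h_k))\big)=\big(\sum_k|S_kh_k|^2\big)^{1/2}$), and $V=(\vec S)^*$; in particular $\mathsf{L}^{\Delta}_{c,p,1}=\|V\|_{L_{p'}(\el_2)\to L_{p'}}$ and $\mathsf{L}^{\Delta}_{t,p,1}\le\|V\|_{L_p(\el_2)\to L_p}$.

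For the one-dimensional \emph{upper} bounds I would proceed in layers. (i) The reverse inequality for $1<p\le2$: the Littlewood-Paley characterization of the Hardy space gives $\|f\|_1\le\|f\|_{H_1}\les\|S^\Delta(f)\|_1$, i.e.\ $\mathsf{L}^{\Delta}_{t,1,1}\les1$; interpolating this endpoint against the $L_2$-isometry (by the standard device for reverse square-function inequalities, applied to $V$ on the range of $\vec S$) yields $\mathsf{L}^{\Delta}_{t,p,1}\les1$ for $1<p\le2$. (ii) The cotype constant for $2<p<\8$: since each $S_k$ is a difference of two modulated half-line projections $M_a\frac{I+iH}{2}M_{-a}$, the operator $\mathrm{diag}(S_k)$ is a vector-valued Calder\'on-Zygmund operator with diagonal kernel dominated by $|x-y|^{-1}$, hence of norm $\les\max(q,q')$ on $L_q(\el_2)$; combining with (i) at $p'\le2$ gives
 $$\mathsf{L}^{\Delta}_{c,p,1}=\|V\|_{L_{p'}(\el_2)\to L_{p'}}\ \le\ \mathsf{L}^{\Delta}_{t,p',1}\,\big\|\mathrm{diag}(S_k)\big\|_{L_{p'}(\el_2)}\ \les\ 1\cdot p\ =\ p.$$
(iii) The reverse inequality for $2\le p<\8$, $\mathsf{L}^{\Delta}_{t,p,1}\les p$: by comparing $S^\Delta$ with the Poisson $g$-function and invoking $\mathsf{L}^{\mathbb P}_{t,p}\les p$ (Theorem~\ref{PH ML}(iii)). (iv) The cotype constant for $1<p\le2$: here $S^\Delta$ is \emph{not} a genuine Calder\'on-Zygmund operator (its $\el_2$-valued convolution kernel is not locally integrable), so (i)-(iii) do not help; since the dyadic blocks are disjoint, Khintchine's inequality (with constants bounded as $p\to1$) gives $\mathsf{L}^{\Delta}_{c,p,1}\les\sup_\e\|M_\e\|_{L_p\to L_p}$, and after telescoping each $M_\e$ is a lacunary combination of modulated Hilbert transforms, for which a sharpened Marcinkiewicz / good-$\lambda$ estimate gives the sharp bound $\les\max(p,p')^{3/2}$ --- the sharp $L_p$ bound for Fourier multipliers of modulus one that are constant on dyadic blocks.

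The matching \emph{lower} bounds come from explicit test functions, most cleanly on $\T$. \emph{Cotype.} Feeding $V$ an input supported in a single coordinate $k_0$ gives $\mathsf{L}^{\Delta}_{c,p,1}\ge\|S_{k_0}\|_{L_{p'}\to L_{p'}}$, and by a scaling argument a dyadic-block multiplier is as bad as the Riesz projection, $\|S_{k_0}\|_{L_{p'}\to L_{p'}}\approx\max(p,p')$; this already gives $\mathsf{L}^{\Delta}_{c,p,1}\ges p$ for $2<p<\8$. For $1<p\le2$ the extra $\sqrt{p'}$ I would extract from Dirichlet kernels: on $|\t|\ges1/N$ one has $|D_N(\t)|\approx|\t|^{-1}$ while $S^{\wt\Delta}(D_N)(\t)\approx(\log(N|\t|))^{1/2}|\t|^{-1}$, so $\|S^{\wt\Delta}(D_N)\|_p/\|D_N\|_p\approx(p-1)^{-1/2}$; superposing such kernels at lacunarily separated frequencies together with a conformal (Pichorides-type) factor raises the exponent to $(p-1)^{-3/2}\approx p'^{3/2}$. \emph{Type.} Take $f=\sum_{j=1}^J\e_j\,e^{2\pi ia_j\cdot}\,\f$ with a fixed Schwartz bump $\f$ and frequencies $a_j$ chosen so that $e^{2\pi ia_j\cdot}\f$ lies in the $j$-th block; then $S^{\wt\Delta}(f)=\sqrt J\,|\f|$, while $\|f\|_p\approx\sqrt{pJ}\,\|\f\|_p$ for $p\ge2$ by Zygmund's inequality for lacunary series, so $\|f\|_p/\|S^{\wt\Delta}(f)\|_p\approx\sqrt p$ and $\mathsf{L}^{\Delta}_{t,p,1}\ges\sqrt p$.

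The part I expect to be hardest is the exponent $3/2$ near $p=1$ for the cotype constant. On the upper side the genuine obstruction is that $S^\Delta$ sits just outside Calder\'on-Zygmund theory, so one must route through the dyadic sign multipliers $M_\e$ and produce the extra factor $\sqrt{p'}$ beyond the naive $p'$; this is the one place where a nontrivial harmonic-analytic estimate, rather than soft duality and the comparison arguments of (iii), is needed. On the lower side one must compute $S^{\wt\Delta}(D_N)$ and the effect of the conformal factor carefully enough to see $\tfrac32$ and not $\tfrac12$ or $2$. The only order this scheme leaves open is that of $\mathsf{L}^{\Delta}_{t,p,1}$ on $[2,\8)$, caught between $\sqrt p$ and $p$: this is exactly the one-parameter gap of Problem~\ref{pb on LP}, which (by the remark following it) genuinely fails for general symmetric markovian semigroups, so a better bound here would need information specific to the classical kernels.
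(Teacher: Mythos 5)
Your reduction to $d=1$ via Theorem~\ref{RvsT} and Theorem~\ref{multi-parameter}, and the identification of the four one-dimensional targets, matches the paper's plan. The lower bound $\mathsf{L}^\Delta_{t,p,1}\ges\sqrt p$ via lacunary series is exactly what the paper does, and several of your other steps (the factorization through $V=\vec S^{\,*}$ and the diagonal vector-valued multiplier for the cotype bound at $p>2$; the single-block Riesz-projection test for the matching lower bound) are plausible alternatives to the paper's approach, which simply cites Bourgain, Lerner, Bakas, Osipov and Pichorides for everything except one estimate.

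The step that carries the paper's new content, however, is your item (iii), and as written it has a genuine gap. You propose to deduce $\|f\|_p\les p\,\|S^\Delta(f)\|_p$ for $p\ge 2$ ``by comparing $S^\Delta$ with the Poisson $g$-function and invoking $\mathsf{L}^{\mathbb P}_{t,p}\les p$.'' For that transfer to work you would need a bound of the type $\|G^{\mathbb P}(f)\|_p\les\|S^\Delta(f)\|_p$ uniformly in $p$, and no such domination holds: $\partial_t\mathbb P_t f=\sum_k\partial_t\mathbb P_t S_kf$ is a sum of pieces with disjoint Fourier support, but this gives neither a pointwise nor an $L_p$-norm comparison in the needed direction without losing factors. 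The paper's actual argument is a clean duality: take $f$ with $\wh f$ supported in $\real_+$, pick a smooth dyadic partition of unity $\wh\f(2^{-k}\cdot)$ with $\wh\f$ supported in $\{\frac12<|\xi|<4\}$, use the Fourier-support overlap to write $\int f\bar g=\sum_k\sum_{|j-k|\le1}\int S_k(f)\,\overline{\f_j*g}$, apply Cauchy-Schwarz and H\"older to get $|\int f\bar g|\le 3\|S^\Delta(f)\|_p\,\|G^\f_{\rm dis}(g)\|_{p'}$, and then use the smooth-square-function cotype bound $\|G^\f_{\rm dis}(g)\|_{p'}\les\max(\sqrt{p'},\,p)\,\|g\|_{p'}=p\,\|g\|_{p'}$ for $p'\le2$ (the discrete analogue of Theorem~\ref{fML}(i)). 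Your factorization through $\mathrm{diag}(S_k)$ in item (ii) does give the cotype estimate $\mathsf{L}^\Delta_{c,p,1}\les p$ for $p>2$, but feeding that into the type side via duality only yields $\mathsf{L}^\Delta_{t,p,1}\le\mathsf{L}^\Delta_{c,p',1}\approx p^{3/2}$, not $p$, so the smooth-square-function route is genuinely needed here.

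Secondarily, items (i) and (iv) and the Dirichlet-kernel/conformal-factor construction are essentially re-derivations of Bourgain's theorems \cite{Bourgain85,Bourgain89} (and Bakas, Osipov, Pichorides for the higher-dimensional and endpoint statements); the paper invokes these as known. If you intend to reprove them, the sketches as given are far from complete — in particular, the claim that $\mathsf{L}^\Delta_{t,p,1}\les1$ on $(1,2]$ follows by ``interpolating the $H_1$ endpoint against $L_2$'' glosses over the fact that a lower square-function inequality does not interpolate like an operator norm; this is precisely Bourgain's dual to Rubio de Francia and needs a real proof, not a soft interpolation.
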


\begin{rk}
 The estimate $\mathsf{L}^{\Delta}_{t,p, d}\les_d p^d$ for $2\le p<\8$ was proved independently by Odysseas Bakas and Hao Zhang after the submission of this article; it improves the author's original one $\mathsf{L}^{\Delta}_{t,p, d}\les_d (p\log p)^d$. However, like for the $g$-function inequality, we are unable to determine the optimal order of $\mathsf{L}^{\Delta}_{t,p, d}$ for $p>2$.  We need to do this only for $d=1$ by virtue of Theorem~\ref{multi-parameter}.
\end{rk}

 \begin{problem}
  Determine the optimal order of $\mathsf{L}^{\Delta}_{t,p, 1}$ as $p\to\8$.
  \end{problem}

 This problem is related to Problem~\ref{pb on LP}. In fact, the smooth version of the dyadic square function $S^{\Delta}$ is a discrete $g$-function, so the analogue for the smooth version of the above problem is a particular case of Problem~\ref{pb on LP}.

\medskip\n{\bf Historical comments.} Part of Corollary~\ref{DLP-const}  is already known.

 \begin{enumerate}[(i)]
 \item It is Bourgain \cite{Bourgain89} who first studied the problem on the optimal orders of the above constants by determining the optimal order of $\mathsf{L}^{\wt\Delta}_{c,p, 1}$.
Lerner \cite{Lerner19}  noted that Bourgain's result remains valid for $\real$ by a different method via weighted norm inequalities.
 \item Bourgain \cite{Bourgain85} proved that $\mathsf{L}^{\wt\Delta}_{t,p, 1}\approx1$. In fact, Bourgain showed that the second inequality of \eqref{DLP-T} holds for any partition of $\ent$ into bounded intervals in the case of $1\le p\le2$. This latter result is dual to Rubio de Francia's celebrated Littlewood-Paley inequality \cite{rubio} that insures the validity of the first inequality of \eqref{DLP-T} for any partition of $\ent$ into bounded intervals in the case of $2\le p<\8$.
  \item Bakas \cite{Bakas19} extended one of Bourgain's estimates to the higher dimensions by showing
 $\mathsf{L}^{\wt\Delta}_{c,p, d}\approx_d p'^{\frac{3d}2}$ for $1<p\le2$.
 \item Journ\'e \cite{journe} extended  Rubio de Francia's inequality to the multi-dimensional setting without explicit estimate of the relevant constant. It is Osipov who proved  the second inequality of \eqref{DLP-R} for $1\le p\le2$ and for any partition of $\real^d$ into bounded rectangles. In particular, Osipov's result implies
 $\mathsf{L}^{\Delta}_{t,p, d}\approx_d1$ for $1<p\le2.$
 \item The estimate $p^{\frac{d}2}\les\mathsf{L}^{\Delta}_{t,p, d}$ for $2\le p<\8$ easily follows from the optimal order of the best constant in the Khintchine inequality for $p>2$; on the other hand,  Pichorides \cite{Pichorides90} proved $\mathsf{L}^{\wt\Delta}_{t,p, 1}\les p\log p$ for $2\le p<\8$.
 \end{enumerate}


\section{A variant of de Leeuw's multiplier theorem}\label{A variant of de Leeuw's multiplier theorem}


In this section we give a variant of de Leeuw's classical transference theorem on Fourier multipliers on $\real^d$ and $\T^d$, see \cite{Leeuw} and \cite[Chapter VII.3]{Stein-Weiss}.

We begin by fixing some notation. Given $z=(z_1,\cdots, z_d)\in\T^d$ and $m=(m_1,\cdots, m_d)\in\ent^d$ let $z^m=(z_1^{m_1}, \cdots, z_d^{m_d})$. We identify $\T^d$ with the cube $\mathbb I^d=[-\frac12,\,\frac12)^d\subset\real^d$ via
$z=(e^{2 \pi \mathrm{i} x_1},\cdots , e^{2 \pi \mathrm{i} x_d})\;\leftrightarrow\; x=(x_1, \cdots, x_d)$, and accordingly the functions on $\T^d$ with the $1$-periodic functions on $\real^d$.

Let $X, Y$  be two Banach spaces and $B(X, Y)$ the space of continuous linear operators from $X$ to $Y$. Given a function $\f: \real^d\to B(X, Y)$, let $T_\f$  be the Fourier multiplier formally defined by $\wh{T_\f(f)}(\xi)=\f(\xi)\wh{f}(\xi)$ for $\xi\in\real^d$ and $f\in L_p(\real^d; X)$; similarly,  define the Fourier multiplier $M_\f$ in the periodic case when $\f$ is restricted to $\ent^d$, namely,  $\wh{M_\f(f)}(m)=\f(m)\wh{f}(m)$ for $m\in\ent^d$ and  $f\in L_p(\T^d; X)$. Here given a measure space $(\O,\mu)$, $L_p(\O; X)$ denote the space of $p$-integrable functions from $\O$ to $X$.

\smallskip

We will assume that  the symbol $\f$ satisfies the following conditions:
\begin{itemize}
  \item[({\bf H}$_1$)] for every $a\in X$,  $\f(\cdot)(a)$ is a measurable function from $\real^d$ to $Y$, and $\f$ is bounded, i.e., $M=\sup_{\xi\in\real^d}\|\f(\xi)\|_{B(X, Y)}<\8$;
 \item[({\bf H}$_2$)] there exists a partition $\R$ of $\real^d$ into bounded rectangles such that $\f$ is strongly continuous on every  $R\in\R$, i.e.,  $\f(\cdot)(a)$ is continuous from $R$ to $Y$ for every $a\in X$;
  \item[({\bf H}$_3$)] for every $a\in X$ and every $R\in\R$ the range of the restriction of $\f(\cdot)(a)$ to $R$ is contained in a finite dimensional subspace of $Y$;
 \item[({\bf H}$_4$)] for every $1<p<\8$ and every $a\in X$ there exists a constant $C_{p,a}$ such that
 $$\big\| T_\f(a f)\big\|_{L_p(\real^d; Y)}\le C_{p,a} \|f\|_{L_p(\real^d)}$$
for all compactly supported $C^\8$ functions $f$ on $\real^d$.
\end{itemize}

The last condition implies that $T_\f(f)$ is well-defined and belongs to $L_p(\real^d; Y)$ for any compactly supported $C^\8$ function $f$ with values in a finite dimensional subspace of $X$.

\smallskip

We are interested in the best constants $\a$ and $\b$ in the following inequalities
 \beq\label{Fourier R}
 \a^{-1}\|f\|_{L_p(\real^d; X)}\le \| T_\f(f)\|_{L_p(\real^d; Y)}\le \b\|f\|_{L_p(\real^d; X)}
 \eeq
for all compactly supported $C^\8$ functions $f$ on $\real^d$ with values in a finite dimensional subspace of $X$. Obviously, if it is finite, $\b$ is equal to the norm of $T_\f$ as a map from $L_p(\real^d; X)$ to $L_p(\real^d; Y)$; we will denote this norm simply by $\|T_\f\|_{p\to p}$. Similarly and by a slight abuse of notation, if $\a$ is finite, we will denote it by $\|T_\f^{-1}\|_{p\to p}$ which is the norm of $T_\f^{-1}$ from the image of $T_\f$ in $L_p(\real^d; Y)$ to $L_p(\real^d; X)$.

We will also consider the periodic version of \eqref{Fourier R}, the corresponding constants will be denoted by $\|M_\f\|_{p\to p}$ and $\|M_\f^{-1}\|_{p\to p}$ which are the best constants such that
 \beq\label{Fourier T}
\|M_\f^{-1}\|_{p\to p}^{-1}\,\|f\|_{L_p(\T^d; X)}\le \|M_\f(f)\|_{L_p(\real^d; Y)}\le\|M_\f\|_{p\to p}\,\|f\|_{L_p(\T^d; X)}
 \eeq
for all  trigonometric polynomials $f$ with coefficients in $X$.

We make the convention that if one of the inequalities in \eqref{Fourier R} and \eqref{Fourier T} does not hold, the corresponding constant is understood to be infinite.

\begin{thm}\label{de Leeuw}
 Let $1<p<\8$.
 \begin{enumerate}[\rm(i)]
 \item Assume that $\f$ is strongly continuous at every $m\in\ent^d$. Then
  $$\|M_\f\|_{p\to p}\le \|T_\f\|_{p\to p}\quad \text { and }\quad \|M_\f^{-1}\|_{p\to p}\le\|T_\f^{-1}\|_{p\to p}.$$
 \item Given $t>0$ define $\f^{(t)}$ by $\f^{(t)}(\xi)=\f(t\xi)$. Then
 $$\|T_\f\|_{p\to p}\le \liminf_{t\to0}\|M_{\f^{(t)}}\|_{p\to p}\quad\text { and }\quad \|T_\f^{-1}\|_{p\to p}\le \liminf_{t\to0}\|M^{-1}_{\f^{(t)}}\|_{p\to p}.$$
\end{enumerate}
\end{thm}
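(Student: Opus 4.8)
The plan is to follow the classical strategy of de Leeuw's transference, adapted to the operator-valued vector-valued setting, and to handle the two directions by essentially dual arguments on the approximation scheme.

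For part (i), I would start from a trigonometric polynomial $f=\sum_{m\in F}\hat f(m)z^m$ with $F\subset\ent^d$ finite and coefficients in a finite-dimensional subspace of $X$, so that $M_\f(f)=\sum_{m\in F}\f(m)\hat f(m)z^m$ makes sense by strong continuity at integer points. The idea is to "blow up" the torus to $\real^d$ by testing $T_\f$ against functions of the form $x\mapsto g(x)f(x)$ where $g$ is a fixed smooth bump and $f$ is viewed as a $1$-periodic function on $\real^d$. Concretely, for $\lambda>0$ put $g_\lambda(x)=g(\lambda x)$ and consider $h_\lambda(x)=g_\lambda(x)f(x)$; then $\widehat{h_\lambda}$ concentrates near the lattice $\ent^d$ in packets of width $O(\lambda)$, and since $F$ is finite and $\f$ is strongly continuous at each $m\in F$, one gets $T_\f(h_\lambda)(x)\to \big(M_\f f\big)(x)\,g_\lambda(x)$ in an appropriate sense as $\lambda\to0$ (after renormalizing by $\lambda^{-d/p}$, the $L_p(\real^d)$-mass of $g_\lambda$). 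Passing to the limit in $\lambda^{-d/p}\|T_\f(h_\lambda)\|_{L_p(\real^d;Y)}\le \|T_\f\|_{p\to p}\,\lambda^{-d/p}\|h_\lambda\|_{L_p(\real^d;X)}$ yields $\|M_\f f\|_{L_p(\T^d;Y)}\,\|g\|_p\le\|T_\f\|_{p\to p}\,\|f\|_{L_p(\T^d;X)}\,\|g\|_p$, i.e.\ the first inequality; the lower bound $\|M_\f^{-1}\|_{p\to p}\le\|T_\f^{-1}\|_{p\to p}$ comes out of the same computation by reading the chain of inequalities in the other direction (one needs $M_\f f$ in the image, which is automatic for trigonometric polynomials).

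For part (ii), I would run the reverse approximation: scaling $\f\mapsto\f^{(t)}$ spreads the symbol so that its restriction to $t^{-1}\ent^d$ approximates $\f$ on all of $\real^d$ as $t\to0$. Given a compactly supported $C^\infty$ function $f:\real^d\to X$ with values in a finite-dimensional subspace, I would periodize: for large $N$ (with $tN\to\infty$ suitably), consider the $N$-periodization of $f$, or equivalently view $f$ as living on a torus of side $N$ and relate $M_{\f^{(t)}}$ on that torus to $T_\f$ on $\real^d$ via Riemann-sum approximation of the inverse Fourier integral by the lattice sum over $t^{-1}\ent^d$. Here conditions $(\mathbf H_2)$, $(\mathbf H_3)$ are what make the Riemann-sum/sampling step legitimate — $\f$ is strongly continuous on each rectangle $R$ of the partition $\R$ with finite-dimensional range there, so on each $R$ one has genuine uniform continuity of $\f(\cdot)(a)$ and can control the error of replacing the integral by a lattice sum; the finitely many rectangles crossed by a compact frequency set and the finite-dimensionality of the value space keep all estimates under control. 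Boundedness $(\mathbf H_1)$ together with $(\mathbf H_4)$ guarantee every operator in sight is well-defined on the relevant dense class. Then $\|T_\f(f)\|_{L_p(\real^d;Y)}\le\big(\liminf_{t\to0}\|M_{\f^{(t)}}\|_{p\to p}+o(1)\big)\|f\|_{L_p(\real^d;X)}$, and likewise for the inverse.

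The main obstacle I anticipate is the sampling/Riemann-sum step in part (ii): unlike the scalar case, one cannot invoke continuity of $\f$ in operator norm (that is false for typical multipliers such as the ones arising from $G^\f$ or $S^\Delta$), so the convergence must be extracted "vectorially", one $a\in X$ at a time, and only the hypotheses $(\mathbf H_2)$--$(\mathbf H_3)$ make this possible — this is precisely the point where the present variant departs from the textbook statement. A secondary technical nuisance is the bookkeeping for the two constants simultaneously (the $\liminf$ for $\|M_{\f^{(t)}}^{-1}\|_{p\to p}$ requires knowing that $T_\f(f)$ is approximated by elements of the image of $M_{\f^{(t)}}$), but this is handled by keeping the approximants themselves of the transferred form throughout the limiting argument rather than only their norms.
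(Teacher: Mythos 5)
Your plan matches the paper's proof in essence: part (i) is established via an auxiliary modulating function whose normalized $L_p(\real^d)$-mass recovers the torus norm in the limit (the paper's Lemma~\ref{link1} uses specifically the Gaussian $\wh{\mathbb{H}_t}$, which makes the scaling and the interpolation bound on the error packets explicit), and part (ii) is the Riemann-sum/periodization argument of Lemma~\ref{link2} with the partition of unity $\eta$ satisfying $\sum_m\eta(x+m)^p=1$ to pass between $L_p(\real^d)$ and $L_p(\T^d)$ norms. One small misattribution: $(\mathbf H_3)$ is not what licenses the Riemann-sum step in part (ii) — there only $(\mathbf H_2)$ is invoked; $(\mathbf H_3)$ enters in part (i), where the compact-support reduction plus finite-dimensionality of the range let one treat $Y$ as Hilbertian and apply Plancherel to kill the $L_2$ part of the error terms (cf.\ Remark~\ref{de Leeuw bis}).
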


We will adapt de Leeuw's arguments. Note, however, that de Leeuw's proof depends on a duality argument that does not seem to extend to our setting. Instead, we establish a direct link between $T_\f$ and $M_\f$ as in Lemma~\ref{link1} below.

\smallskip

The following lemma is a well known elementary fact (see \cite[Lemma~VII.3.9]{Stein-Weiss}).

\begin{lem}\label{ergodic}
Let $f\in L_1(\T^d)$. Then
$$\lim_{t\to\8}\int_{\real^d}f(x)\mathbb{H}_t(x)dx=\int_{\T^d}f(z)dz.$$
\end{lem}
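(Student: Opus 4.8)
The plan is to unfold the integral over $\real^d$ onto the fundamental cube and then identify the periodized heat kernel by Poisson summation. First I would use that $f$ is identified with a $1$-periodic function on $\real^d$ and that $\mathbb{H}_t\ge0$ is integrable: by Tonelli's theorem,
$$\int_{\real^d} f(x)\,\mathbb{H}_t(x)\,dx=\int_{\mathbb{I}^d} f(x)\,K_t(x)\,dx,\qquad K_t(x):=\sum_{n\in\ent^d}\mathbb{H}_t(x+n),$$
where the series converges uniformly on $\real^d$ since $\mathbb{H}_t$ is Schwartz, so that $K_t$ is a continuous $1$-periodic function and the left-hand side is absolutely convergent (being bounded by $\norm{K_t}_{L_\8}\norm{f}_{L_1(\T^d)}$). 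Since $\int_{\mathbb{I}^d}K_t=\int_{\real^d}\mathbb{H}_t=1$, it will suffice to show $\norm{K_t-1}_{L_\8(\mathbb{I}^d)}\to0$ as $t\to\8$, because then $\big|\int_{\mathbb{I}^d}f\,K_t-\int_{\mathbb{I}^d}f\big|\le\norm{K_t-1}_{L_\8(\mathbb{I}^d)}\norm{f}_{L_1(\T^d)}\to0$.

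To estimate $K_t-1$ I would pass to the Fourier side. Applying the Poisson summation formula to the Schwartz function $\mathbb{H}_t$, together with the elementary Gaussian computation $\wh{\mathbb{H}_t}(\xi)=e^{-4\pi^2 t\abs{\xi}^2}$ (normalization $\wh{g}(\xi)=\int_{\real^d}g(x)e^{-2\pi\i x\cdot\xi}\,dx$), gives
$$K_t(x)=\sum_{k\in\ent^d}\wh{\mathbb{H}_t}(k)\,e^{2\pi\i k\cdot x}=\sum_{k\in\ent^d}e^{-4\pi^2 t\abs{k}^2}\,e^{2\pi\i k\cdot x}.$$
Isolating the $k=0$ term then yields $\norm{K_t-1}_{L_\8(\mathbb{I}^d)}\le\sum_{k\in\ent^d\setminus\{0\}}e^{-4\pi^2 t\abs{k}^2}$, and for $t\ge1$ this is bounded by $e^{-4\pi^2 t}\sum_{k\neq0}e^{-4\pi^2(\abs{k}^2-1)}$, which tends to $0$ as $t\to\8$; this finishes the argument.

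Nothing here is delicate: the only small points to check are the interchange of sum and integral in the first display and the applicability of Poisson summation, both immediate from the Schwartz decay of $\mathbb{H}_t$. If one prefers to avoid Poisson summation, the identity $\int_{\real^d}e^{2\pi\i k\cdot x}\,\mathbb{H}_t(x)\,dx=e^{-4\pi^2 t\abs{k}^2}$ proves the lemma directly for trigonometric polynomials $f$, and the uniform bound $\sup_{t\ge1}\norm{K_t}_{L_\8}<\8$ combined with the density of trigonometric polynomials in $L_1(\T^d)$ extends it to every $f\in L_1(\T^d)$.
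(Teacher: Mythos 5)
Your proof is correct, complete, and self-contained. The paper does not give its own argument for this lemma; it simply cites \cite[Lemma~VII.3.9]{Stein-Weiss}. Your route — unfold the integral by Tonelli onto the fundamental cube against the periodized heat kernel $K_t$, then apply Poisson summation to the Schwartz function $\mathbb{H}_t$ so that $K_t(x)=\sum_{k\in\ent^d}e^{-4\pi^2 t|k|^2}e^{2\pi\i k\cdot x}$, and finally read off $\|K_t-1\|_{L_\8}\to 0$ as $t\to\8$ from the tail of the Gaussian sum — is exactly the standard textbook proof, and it matches what the cited reference does. You correctly take care of the points that actually need checking: that $\int_{\real^d}f\,\mathbb{H}_t$ is absolutely convergent even though the periodic $f$ is not in $L_1(\real^d)$ (bounded by $\|K_t\|_{L_\8}\|f\|_{L_1(\T^d)}$), that the periodization series converges uniformly because $\mathbb{H}_t$ is Schwartz, and that $\int_{\mathbb{I}^d}K_t=1$. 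The Fourier transform normalization $\wh{\mathbb{H}_t}(\xi)=e^{-4\pi^2t|\xi|^2}$ agrees with the one the paper uses in the proof of Lemma~\ref{link1}. The alternative you sketch (verify the lemma on trigonometric polynomials via the pointwise Gaussian integral and extend by density using $\sup_{t\ge1}\|K_t\|_{L_\8}<\8$) is also fine, though as you note it still needs the uniform bound on $K_t$, so it does not really buy much over the direct argument.
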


The following  expresses the periodic Fourier multiplier $M_\f$ in terms of the Euclidean $T_\f$.

\begin{lem}\label{link1}
Assume  that $\f$ is strongly continuous at every point $m\in\ent^d$. Let $P$ be a trigonometric polynomial with coefficients in $X$. Then
$$\lim_{t\to0}(4\pi pt)^{\frac{d}{2p}}\big\|T_\f(P\,\wh{\mathbb{H}_t})\big\|_{L_p(\real^d; Y)}=\big\|M_\f(P)\big\|_{L_p(\T^d; Y)}.$$
\end{lem}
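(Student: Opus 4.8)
\textbf{Proof proposal for Lemma~\ref{link1}.} The plan is to unwind both sides using the Fourier transform and exploit the fact that $\wh{\mathbb{H}_t}$ concentrates all its mass near $0$ as $t\to0$, so that dilating it by $t^{-1}$ and sampling $\f$ at lattice points reproduces the periodic multiplier $M_\f$. Write $P(x)=\sum_{m\in F}a_m e^{2\pi\i m\cdot x}$ with $F\subset\ent^d$ finite and $a_m\in X$. Since $\wh{\mathbb{H}_t}(\xi)=e^{-4\pi^2 t|\xi|^2}$ (with the normalization of the Fourier transform used in the paper), the product $P\,\wh{\mathbb{H}_t}$ has Fourier transform a sum of translates of a Gaussian centered at the points $m\in F$. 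Applying $T_\f$ gives
\beq\label{link1-proof-a}
T_\f\big(P\,\wh{\mathbb{H}_t}\big)(x)=\sum_{m\in F}\int_{\real^d}\f(\xi)(a_m)\,g_t(\xi-m)\,e^{2\pi\i x\cdot\xi}\,d\xi,
\eeq
where $g_t$ is the Gaussian with $\wh{g_t}=\wh{\mathbb{H}_t}$, normalized so that $\int g_t=1$; concretely $g_t(\xi)=(4\pi t)^{-d/2}e^{-|\xi|^2/(4t)}$ after the appropriate bookkeeping. The scaling factor $(4\pi pt)^{d/(2p)}$ is exactly $\|g_t^{1/?}\cdots\|$-type normalization: one checks that $\|g_t(\cdot-m)e^{2\pi\i x\cdot(\cdot)}\|$ contributes, in $L_p(dx)$, a factor proportional to $(4\pi pt)^{-d/(2p)}$, which is why that prefactor appears.

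The key step is a change of variables $\xi=m+\sqrt{t}\,\eta$ in each summand of \eqref{link1-proof-a}, turning $g_t(\xi-m)\,d\xi$ into a fixed probability density $g_1(\eta)\,d\eta$ (up to the normalization constant), and turning $e^{2\pi\i x\cdot\xi}$ into $e^{2\pi\i x\cdot m}e^{2\pi\i\sqrt{t}\,x\cdot\eta}$. Then
\beq\label{link1-proof-b}
(4\pi t)^{d/2}T_\f\big(P\,\wh{\mathbb{H}_t}\big)(x)=\sum_{m\in F}e^{2\pi\i x\cdot m}\int_{\real^d}\f(m+\sqrt t\,\eta)(a_m)\,g_1(\eta)\,e^{2\pi\i\sqrt t\,x\cdot\eta}\,d\eta.
\eeq
By strong continuity of $\f$ at each $m\in\ent^d$ (hypothesis of the lemma) together with boundedness ({\bf H}$_1$) and dominated convergence, the integral converges pointwise in $x$, as $t\to0$, to $\f(m)(a_m)$; hence $(4\pi t)^{d/2}T_\f(P\,\wh{\mathbb{H}_t})(x)\to\sum_{m\in F}\f(m)(a_m)e^{2\pi\i x\cdot m}$, which is precisely $(M_\f P)(x)$ viewed as a $1$-periodic function on $\real^d$. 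The remaining point is to upgrade this pointwise convergence of the (rescaled) integrand to convergence of the $L_p(\real^d;Y)$ norms against the correct power of $t$: the function in \eqref{link1-proof-b} is $1$-periodic-in-the-limit but for finite $t$ it is a genuine $L_p(\real^d)$ function because of the Gaussian decay; writing $\real^d=\bigcup_{k\in\ent^d}(k+\mathbb I^d)$ and using the near-periodicity, $\int_{\real^d}|\cdots|^p dx\approx\big(\sum_{k}\text{(weight from }e^{-|x|^2/(4t)}\text{-type tails)}\big)\int_{\mathbb I^d}|(M_\f P)(x)|_Y^p dx$, and the sum over $k$ of the Gaussian weights, after the $\sqrt t$ rescaling in the $x$-variable, produces exactly the factor that the normalization $(4\pi pt)^{d/(2p)}$ cancels. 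This is where Lemma~\ref{ergodic} enters: it is the clean statement that $\int_{\real^d}F(x)\mathbb{H}_t(x)\,dx\to\int_{\T^d}F$ for $1$-periodic $F\in L_1(\T^d)$, applied with $F=|(M_\f P)(\cdot)|_Y^p$ (or rather a version of it with the slowly-varying error built in), after recognizing that integrating the $p$-th power of \eqref{link1-proof-b} over $\real^d$ is, up to $t$-powers, integrating a $1$-periodic function against a heat kernel at large time-parameter $\sim 1/t$.

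\textbf{Organization and main obstacle.} Concretely I would: (1) reduce to $P$ a single exponential $a\,e^{2\pi\i m\cdot x}$ by finite-sum linearity, being careful that the limit is a norm, not linear, so I instead keep the full finite sum and track the Gaussian cross-terms (they vanish in the limit because distinct lattice points are separated and the rescaled Gaussians localize); (2) perform the $\xi\mapsto m+\sqrt t\eta$ substitution and isolate the normalization constant, matching it to $(4\pi pt)^{d/(2p)}$; (3) prove pointwise convergence of the rescaled integrand via ({\bf H}$_1$) and strong continuity at lattice points; (4) prove the $L_p$-norm convergence by the periodization/heat-kernel computation above, invoking Lemma~\ref{ergodic}. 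The main obstacle is step (4): one must handle the fact that for $t>0$ the function $T_\f(P\,\wh{\mathbb{H}_t})$ is \emph{not} periodic, and show that the Gaussian modulation $e^{2\pi\i\sqrt t x\cdot\eta}$ inside the integral, which for fixed $t$ makes the function decay in $x$, does not distort the $L_p$ norm in the limit — i.e. that one may pass the limit inside the $p$-th-power integral. I expect this to require a uniform (in $t$, for $t$ small) domination: bound $|(4\pi t)^{d/2}T_\f(P\,\wh{\mathbb{H}_t})(x)|_Y$ by $C\sum_{m\in F}\|a_m\|_X\,h_t(x)$ for a Gaussian-type $h_t$ whose $L_p(\real^d)$ norm is comparable to $(4\pi pt)^{-d/(2p)}$ uniformly, then apply a generalized dominated convergence theorem for the measures $h_t(x)^p\,dx/\|h_t\|_p^p$, which converge weakly to normalized Lebesgue measure on a period cube. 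The separation of lattice points in $F$ and hypothesis ({\bf H}$_3$) (finite-dimensionality of the range, so that $Y$-valued continuity and domination are unproblematic) are what make this go through.
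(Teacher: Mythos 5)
Your pointwise-limit intuition is on the right track, but there is a genuine gap precisely at the step you yourself flag as ``the main obstacle,'' and in addition there is a normalization error along the way. The paper's proof handles the $L_p$-passage by a device that avoids the difficulty entirely, so it is worth spelling out the contrast.

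First, a bookkeeping issue: in your display \eqref{link1-proof-b} the prefactor $(4\pi t)^{d/2}$ should not be there. The change of variables $\xi=m+\sqrt t\,\eta$ converts $\mathbb{H}_t(\xi-m)\,d\xi$ into $\mathbb{H}_1(\eta)\,d\eta$ with \emph{no} residual power of $t$, so in fact $T_\f(P\,\wh{\mathbb{H}_t})(x)\to M_\f(P)(x)$ pointwise \emph{without} rescaling. The scaling $(4\pi pt)^{d/(2p)}$ in the lemma is there because the limit is $1$-periodic (hence not in $L_p(\real^d)$), not because $T_\f(P\,\wh{\mathbb{H}_t})(x)$ itself blows up pointwise.

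The real gap is your step (4). You want to pass from pointwise convergence to convergence of the scaled $L_p$-norms via a dominated-convergence argument, and you propose to dominate $\|T_\f(P\,\wh{\mathbb{H}_t})(x)\|_Y$ by $C\sum_m\|a_m\|_X\,h_t(x)$ for a Gaussian-type $h_t$. Such a pointwise majorant is simply not available under hypotheses ({\bf H}$_1$)--({\bf H}$_4$). Using ({\bf H}$_1$) alone you only get $\|T_\f(P\,\wh{\mathbb{H}_t})(x)\|_Y\le M\sum_m\|a_m\|_X$, uniformly in $x$ but with no decay; and $\f$ is only assumed bounded and piecewise strongly continuous, so the oscillatory integral $\int\f(\xi)(a_m)\mathbb{H}_t(\xi-m)e^{2\pi\i x\cdot\xi}d\xi$ need not decay pointwise in $x$ at all. ({\bf H}$_4$) gives membership in $L_p(\real^d;Y)$ but says nothing pointwise. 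So the majorization you need would have to come from somewhere else, and it isn't there.

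The paper circumvents this by \emph{not} trying to dominate the whole function. It writes, inside each summand, $\f(\xi)=\f(m)+(\f(\xi)-\f(m))$, which splits
\[
T_\f(P\,\wh{\mathbb{H}_t})(x)=M_\f(P)(x)\,\wh{\mathbb{H}_t}(x)+\sum_m f_{m,t}(x).
\]
The main term is \emph{explicit}: a $1$-periodic function times the Gaussian $e^{-4\pi^2 t|x|^2}$. Raising to the $p$-th power and using the identity $(4\pi pt)^{d/2}e^{-4\pi^2 p t|x|^2}=\mathbb{H}_s(x)$ with $s=(16\pi^2 pt)^{-1}$ turns the scaled $L_p$-norm of this term into exactly the quantity handled by Lemma~\ref{ergodic}, no domination needed. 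The error terms $f_{m,t}$ are then shown to vanish after scaling \emph{not} by pointwise control, but by interpolation between $L_2$ and some $L_q$: the $L_2$-bound is where strong continuity of $\f$ at $m$ produces smallness (via Plancherel, after using compact support of $\f$ and ({\bf H}$_3$) to reduce to $Y$ Hilbert), while the $L_q$-bound is merely \emph{uniform} in $t$ after the appropriate scaling, coming from ({\bf H}$_1$) and ({\bf H}$_4$). Writing $\frac1p=\frac{1-\t}2+\frac\t q$ then gives the vanishing in $L_p$. This split-and-interpolate scheme is the idea your proposal is missing; you should replace your step (4) with it.
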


\begin{proof}
 By approximation, we can assume that $\f$ is compactly supported. Let
 $$P(z)=\sum_m a_m z^m,\quad a_m\in X.$$
Then
 $$\wh{P\,\wh{\mathbb{H}_t}}(\xi)=\sum_m a_m \mathbb{H}_t(\xi-m).$$
Thus
 \begin{align*}
 T_\f(P\,\wh{\mathbb{H}_t})(x)
 &=\sum_m  \int_{\real^d}\f(\xi)(a_m) \mathbb{H}_t(\xi-m)e^{2\pi\i \xi\cdot x}d\xi\\
 &=M_\f(P)(x)\wh{\mathbb{H}_t}(x)
   +\sum_m \int_{\real^d}(\f(\xi)-\f(m))(a_m) \mathbb{H}_t(\xi-m)e^{2\pi\i \xi\cdot x}d\xi\\
   &{\mathop=^{\rm def}}M_\f(P)(x)\wh{\mathbb{H}_t}(x)+\sum_m f_{m,t}(x).
 \end{align*}
Recall that
 $\wh{\mathbb{H}_t}(x)=e^{-4\pi^2 t|x|^2}$.
Letting $s=(16\pi^2pt)^{-1}$ and using Lemma~\ref{ergodic}, we get
  \begin{align*}
  \lim_{t\to0}(4\pi pt)^{\frac{d}2}\int_{\real^d}\big\|M_\f(P)(x)\wh{\mathbb{H}_t}(x)\big\|^p_Ydx
  =\lim_{s\to\8}\int_{\real^d}\big\|M_\f(P)(x)\big\|^p_Y \mathbb{H}_{s}(x)dx
  =\big\|M_\f(P)\big\|_{L_p(\T^d; Y)}^p.
   \end{align*}
Thus it remains to show that
  \beq\label{zero term}
  \lim_{t\to0}(4\pi pt)^{\frac{d}{2p}}\big\|f_{m,t}\big\|_{L_p(\real^d; Y)}=0,\quad \forall\, m.
  \eeq
Choose $q\in(1, \,\8)$ and $\t\in(0,\,1)$ such that $\frac1{p}=\frac{1-\t}{2}+\frac\t{q}$. Then
 $$(4\pi pt)^{\frac{d}{2p}}\big\|f_{m,t}\big\|_{L_p(\real^d; Y)}\le \big[(4\pi pt)^{\frac{d}{4}}\big\|f_{m,t}\big\|_{L_2(\real^d; Y)}\big]^{1-\t}\,
 \big[(4\pi pt)^{\frac{d}{2q}}\big\|f_{m,t}\big\|_{L_q(\real^d; Y)}\big]^\t.$$
By  ({\bf H}$_1$) and ({\bf H}$_4$),
 $$\big\|f_{m,t}\big\|_{L_q(\real^d; Y)}\le (C_{q, a_m} +M\|a_m\|_X)\big\| z^m \,\wh{\mathbb{H}_t}\big\|_{L_q(\real^d)}= (C_{q, a_m} +M\|a_m\|_X) \big\|\wh{\mathbb{H}_t}\big\|_{L_q(\real^d)}.$$
 It follows that
 $$\sup_{t>0}(4\pi pt)^{\frac{d}{2q}}\big\|f_{m,t}\big\|_{L_q(\real^d; Y)}<\8.$$
Let us treat the part on the $L_2$-norm. Since we have assumed that $\f$ is compactly supported, by  ({\bf H}$_3$), we can further assume that $Y$ is  finite dimensional, so isomorphic to a Hilbert space. Thus by the Plancherel identity, there exists a constant $C$, depending on $a_m$ and $Y$, such that
 $$\big\|f_{m,t}\big\|_{L_2(\real^d; Y)}^2\le C^2\int_{\real^d}\big\|(\f(\xi)-\f(m))(a_m)\big\|_Y^2 \mathbb{H}_t(\xi-m)^2d\xi.$$
Given $\e>0$,  the strong continuity of $\f$ at $m$ implies that there exists $\d>0$ such that
 $\big\|(\f(\xi)-\f(m))(a_m)\big\|_Y<\e$ whenever $|\xi-m|<\d$.
Thus by ({\bf H}$_1$),
  \begin{align*}
  &(4\pi pt)^{\frac{d}{2}}\int_{\real^d}\big\|(\f(\xi)-\f(m))(a_m)\big\|_Y^2 \mathbb{H}_t(\xi-m)^2d\xi\\
 &\hskip .2cm \le \e^2 (4\pi pt)^{\frac{d}{2}}\int_{|\xi-m|<\d} \mathbb{H}_{t}(\xi-m)^2d\xi + (2M\|a_m\|_X)^2 (4\pi pt)^{\frac{d}{2}} \int_{|\xi-m|\ge\d} \mathbb{H}_{t}(\xi-m)^2d\xi\\
 & \les_{p, d} \e^2 + (2M\|a_m\|_X)^2  \int_{|\xi|\ge\frac{\d}{\sqrt{2t}}} e^{-|\xi|^2}d\xi.
  \end{align*}
 Therefore,
  $$\limsup_{t\to0}(4\pi pt)^{\frac{d}{4}}\big\|f_{m,t}\big\|_{L_2(\real^d; Y)}\les_{p, d}C\e.$$
 As $\e$ is arbitrary, combining the above estimates, we deduce \eqref{zero term}.
\end{proof}

Conversely, we can estimate $T_\f$ in terms of $M_\f$.

\begin{lem}\label{link2}
 Let $f:\real^d\to X$ be a $C^\8$ function with compact support and define the periodization of $\wt{f_t}$:
 $$\wt{f_t}(x)=\sum_{m\in\ent^d}f_t(x+m), \quad x\in\real^d.$$
Viewing $\wt{f_t}$ as a function on $\T^d$, we have
 $$\lim_{t\to0}t^{\frac{d}{p'}}\big\|M_{\f^{(t)}}(\wt{f_t})\big\|_{L_p(\T^d; Y)}=\big\|T_{\f}(f)\big\|_{L_p(\real^d; Y)}.$$
\end{lem}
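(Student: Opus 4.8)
The plan is to mimic the proof of Lemma~\ref{link1}, but now \emph{unfolding} the torus rather than averaging against the heat kernel. As there, two reductions cost nothing: approximating $\f$ by $\f\,\un_{\{|\xi|\le N\}}$ we may assume $\f$ has compact support, and then by ({\bf H}$_3$) (only finitely many $R\in\R$ meet $\operatorname{supp}\f$, and $f$ takes values in a fixed finite-dimensional subspace of $X$) we may assume $Y$ is finite-dimensional, hence Hilbertian. Since $f\in C^\8_c$, $f_t$ is Schwartz, so $\wt{f_t}$ has $m$-th Fourier coefficient $\wh{f_t}(m)=\wh f(tm)$, whence $M_{\f^{(t)}}(\wt{f_t})$ has Fourier coefficients $\f(tm)\wh f(tm)$ ($m\in\ent^d$). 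Put
$$v_t(y)=t^{d}\sum_{m\in\ent^d}\f(tm)\,\wh f(tm)\,e^{2\pi\i(tm)\cdot y},\qquad y\in\real^d,$$
a $\tfrac1t\ent^d$-periodic function. The substitution $x=ty$ (using $d/p'=d-d/p$) yields the exact identity $t^{d/p'}\|M_{\f^{(t)}}(\wt{f_t})\|_{L_p(\T^d;Y)}=\|v_t\|_{L_p(\tfrac1t\mathbb I^d;Y)}$, so it suffices to prove $\|v_t\|_{L_p(\tfrac1t\mathbb I^d;Y)}\to\|g\|_{L_p(\real^d;Y)}$ as $t\to0$, where $g:=T_\f(f)=(\f\wh f)^{\vee}$.

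Now observe that $v_t(y)$ is exactly the mesh-$t$ Riemann sum of the absolutely convergent integral $g(y)=\int_{\real^d}\f(\xi)\wh f(\xi)e^{2\pi\i\xi\cdot y}\,d\xi$. Since $\f\wh f$ is bounded, compactly supported, and continuous off the null set where $\f$ jumps, it (and $\|\f\wh f\|_Y^2$) is Riemann integrable; hence $v_t(y)\to g(y)$ for \emph{every} $y$, $\sup_{0<t\le1}\|v_t\|_{L_\8(\real^d;Y)}<\8$, and $\int_{\tfrac1t\mathbb I^d}\|v_t\|_Y^2=t^{d}\sum_{\xi\in t\ent^d}\|\f(\xi)\wh f(\xi)\|_Y^2\to\|g\|_{L_2(\real^d;Y)}^2$. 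The ``$\liminf$'' half is then immediate from Fatou: writing $B_R=\{|y|<R\}$, for fixed $R$ and small $t$ one has $\|v_t\|_{L_p(\tfrac1t\mathbb I^d;Y)}^p\ge\int_{B_R}\|v_t\|_Y^p\to\int_{B_R}\|g\|_Y^p$, and $R\to\8$ gives $\liminf_{t\to0}t^{d/p'}\|M_{\f^{(t)}}(\wt{f_t})\|_{L_p(\T^d;Y)}\ge\|T_\f(f)\|_{L_p(\real^d;Y)}$.

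The real work is the reverse inequality --- ruling out escape of $L_p$-mass along the expanding domains $\tfrac1t\mathbb I^d$. For $p\ge2$ this is quick: as $\|v_t\|_{L_\8}\le C_f$ we get $\int_{\tfrac1t\mathbb I^d\setminus B_R}\|v_t\|_Y^p\le C_f^{p-2}\int_{\tfrac1t\mathbb I^d\setminus B_R}\|v_t\|_Y^2$, and (for $t$ small enough that $B_R\subset\tfrac1t\mathbb I^d$) the latter equals $C_f^{p-2}\big(\int_{\tfrac1t\mathbb I^d}\|v_t\|_Y^2-\int_{B_R}\|v_t\|_Y^2\big)$, which $\to C_f^{p-2}\big(\|g\|_{L_2}^2-\int_{B_R}\|g\|_Y^2\big)\to0$ as $t\to0$ then $R\to\8$; together with $\int_{B_R}\|v_t\|^p\to\int_{B_R}\|g\|^p$ this forces $\limsup_{t\to0}\int_{\tfrac1t\mathbb I^d}\|v_t\|_Y^p\le\|g\|_{L_p}^p$. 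For all $1<p<\8$ at once I would instead produce a single $t$-uniform integrable majorant,
$$\|v_t(y)\|_Y\ \lesssim_f\ \prod_{j=1}^{d}\frac1{1+|y_j|}\qquad\big(y\in\tfrac1t\mathbb I^d,\ 0<t\le1\big),$$
whose $p$-th power is integrable on $\real^d$ precisely because $p>1$; dominated convergence then delivers $\int_{\tfrac1t\mathbb I^d}\|v_t\|_Y^p\to\int_{\real^d}\|g\|_Y^p$. Building this majorant is, I expect, the technical heart of the proof: writing $v_t$ as a finite sum over the rectangles $R=\prod_jI^{(j)}\in\R$ meeting $\operatorname{supp}(\f\wh f)$, the inner geometric sums $\sum_{m_j}e^{2\pi\i tm_jy_j}$ have modulus $\lesssim|e^{2\pi\i ty_j}-1|^{-1}\lesssim|ty_j|^{-1}$ when $|ty_j|\le\tfrac12$ (true on $\tfrac1t\mathbb I^d$), and each Abel transform transfers the oscillation onto the factor $e^{2\pi\i ty_j}-1$ at the cost of a variation of $\f\wh f$ in $\xi_j$; iterating over the coordinates with $|y_j|\ge1$ and bounding the rest by the trivial $\|v_t(y)\|_Y\lesssim_f1$ gives the product decay. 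The variations are finite because $\wh f$ is Schwartz and $\f$ is smooth (say $C^d$) on each piece of $\R$ --- automatic for every multiplier used in this paper, where $\f$ restricted to a piece is a smooth, often constant, operator-valued function; for a fully general symbol one would add this mild regularity on the pieces of $\R$, or approximate. Either way, preventing the loss of mass is the sole genuine obstacle.
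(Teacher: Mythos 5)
Your proof takes essentially the same route as the paper's: both exploit that $\wh{f_t}(m)=\wh f(tm)$, both recognize the rescaled sum $t^d\sum_m\f(tm)\wh f(tm)e^{2\pi\i tm\cdot y}$ as a mesh-$t$ Riemann sum converging pointwise to $T_\f(f)(y)$ (Riemann integrability being exactly what $({\bf H}_2)$ guarantees), and both reduce matters to an exact identity transferring the $L_p(\T^d)$-norm of $M_{\f^{(t)}}(\wt{f_t})$ to an $L_p$-norm of this sum over an expanding domain. The only cosmetic difference is in how that identity is produced: the paper multiplies by a continuous partition-of-unity function $\eta$ from Stein--Weiss (with $\sum_m\eta(x+m)^p=1$) and takes the norm over all of $\real^d$, while you change variables $x=ty$ and integrate over the expanding fundamental domain $\tfrac1t\mathbb I^d$. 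These two devices give the same scaling factor $t^{d/p'}$ and face the identical analytic issue.

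Where you go beyond the paper is in making the limit-versus-$L_p$-norm interchange explicit. The paper asserts in a single line that pointwise convergence of $t^dM_{\f^{(t)}}(\wt{f_t})(tx)\eta(tx)$ to $T_\f(f)(x)$ yields norm convergence, without supplying a majorant or a tightness argument; you correctly flag this as the genuine obstacle (escape of $L_p$-mass along $\tfrac1t\mathbb I^d$), observe that Fatou gives the $\liminf\ge$ half for free (which, incidentally, is the only direction actually invoked in the proof of Theorem~\ref{de Leeuw}(ii)), and then close the gap cleanly for $p\ge2$ by interpolating against the exact $L_2$-mass conservation $\int_{\tfrac1t\mathbb I^d}\|v_t\|_Y^2\to\|g\|_{L_2}^2$ (Plancherel, valid since you have reduced to Hilbertian $Y$ via $({\bf H}_3)$). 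Your treatment of $1<p<2$ via an Abel-summation majorant $\prod_j(1+|y_j|)^{-1}$ is a reasonable sketch, but as you yourself note it uses bounded variation (or smoothness) of $\f$ on the pieces of $\R$, which $({\bf H}_2)$ alone does not supply; one would need to add that mild regularity, or (more in the spirit of de Leeuw) pass via duality from the $p\ge 2$ case. This is a real subtlety in the paper's statement, and your honesty in isolating it is a plus rather than a defect.
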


\begin{proof}
 The Fourier series of $\wt{f_t}$ is given by
 $$\wt{f_t}(x)=\sum_{m\in\ent^d}\wh{f}(tm)e^{2\pi\i m\cdot x}.$$
Thus
   \begin{align*}
   \lim_{t\to0} t^dM_{\f^{(t)}}(\wt{f_t})(tx)
   &=\lim_{t\to0} t^d \sum_{m\in\ent^d}\f(tm)\wh{f}(tm)e^{2\pi\i tm\cdot x}\\
 &=\int_{\real^d}\f(\xi)\wh f(\xi)e^{2\pi\i \xi\cdot x}d\xi=T_\f(f)(x),
   \end{align*}
where we have used ({\bf H}$_2$) to insure that the above integral exists in Riemann's sense. Let $\eta$ be a nonnegative continuous function with compact support on $\real^d$ such that
 $$\eta(0)=1\quad\text{ and }\quad \sum_{m\in\ent^d}\eta(x+m)^p=1$$
(see  \cite[Lemma~VII.3.21]{Stein-Weiss}). Then
 $$ \lim_{t\to0} t^dM_{\f^{(t)}}(\wt{f_t})(tx)\eta(tx)=T_\f(f)(x).$$
Thus
 $$\big\|T_{\f}(f)\big\|_{L_p(\real^d; Y)}=\lim_{t\to0} t^d \big\|M_{\f^{(t)}}(\wt{f_t})(t\cdot)\eta(t\cdot)\big\|_{L_p(\real^d; Y)}.$$
 However,
  \begin{align*}
 \big\|M_{\f^{(t)}}(\wt{f_t})(t\cdot)\eta(t\cdot)\big\|_{L_p(\real^d; Y)}^p
  &=t^{-d}\int_{\real^d}\|M_{\f^{(t)}}(\wt{f_t})(x)\|_Y^p\eta(x)^pdx\\
  &=t^{-d}\sum_{m\in\ent^d}\int_{\mathbb I^d}\|M_{\f^{(t)}}(\wt{f_t})(x)\|_Y^p\eta(x+m)^pdx\\
  &=t^{-d}\big\|M_{\f^{(t)}}(\wt{f_t})\big\|_{L_p(\T^d; Y)}^p.
   \end{align*}
 We then deduce the desired assertion.
 \end{proof}

 \begin{proof}[Proof of Theorem~\ref{de Leeuw}]
  (i) Let $P$ be a trigonometric polynomial with coefficients in $X$. By Lemma~\ref{link1} and Lemma~\ref{ergodic}
    \begin{align*}
   \big\|M_\f(P)\big\|_{L_p(\T^d; Y)}
   &\le \|T_\f\|_{p\to p}\, \lim_{t\to0}(4\pi pt)^{\frac{d}{2p}}\big\|P\,\wh{\mathbb{H}_t}\big\|_{L_p(\real^d; Y)}\\
   &=\|T_\f\|_{p\to p}\, \big\|P\big\|_{L_p(\T^d; Y)},
  \end{align*}
 whence $\|M_\f\|_{p\to p}\le \|T_\f\|_{p\to p}$. The second inequality $\|M^{-1}_\f\|_{p\to p}\le \|T^{-1}_\f\|_{p\to p}$ is proved in the same way.

 (ii) We use Lemma~\ref{link2} for this part. Let $f$ be a compactly supported function with values in a finite dimensional subspace of $X$. Then for $t$ sufficiently small, $f_t$ is supported in the cube $\mathbb I^d$, so $\wt{f_t}=f_t$. Thus
   \begin{align*}
  \big\|M_{\f^{(t)}}(\wt{f_t})\big\|_{L_p(\T^d; Y)}
  &\le \|M_{\f^{(t)}}\|_{p\to p}\, \big\|\wt{f_t}\big\|_{L_p(\T^d; Y)}\\
  &=\|M_{\f^{(t)}}\|_{p\to p}\,\big\|f_t\big\|_{L_p(\real^d; Y)}\\
  &=t^{-\frac{d}{p'}}\|M_{\f^{(t)}}\|_{p\to p}\,\big\|f\big\|_{L_p(\real^d; Y)}\,.
  \end{align*}
  Therefore, by Lemma~\ref{link2} ,
   $$\big\|T_{\f}(f)\big\|_{L_p(\real^d; Y)}\le \liminf_{t\to0} \|M_{\f^{(t)}}\|_{p\to p}\,\big\|f\big\|_{L_p(\real^d; Y)},$$
  whence
   $$\|T_\f\|_{p\to p}\le \liminf_{t\to0} \|M_{\f^{(t)}}\|_{p\to p}.$$
   We show similarly the other inequality of part (ii).
    \end{proof}

 \begin{rk}\label{de Leeuw bis}
  Some of the hypotheses  ({\bf H}$_1$)--({\bf H}$_4$) can be weakened for the validity of Theorem~\ref{de Leeuw}. We have seen in the proof of Lemma~\ref{link2} that  ({\bf H}$_2$) can be replaced by the  Riemann integrability of the function $\xi\mapsto \f(\xi)\wh f(\xi)e^{2\pi\i \xi\cdot x}$  for every compactly supported $C^\8$ function $f$. On the other hand, the proof of Lemma~\ref{link1} shows that ({\bf H}$_3$) is unnecessary if $Y$ is isomorphic to  a Hilbert space.
 \end{rk}

If the function $\f$ is strongly continuous, Theorem~\ref{de Leeuw} can be reformulated as follows: the norms  $\|T_\f\|_{p\to p}$ and $\|T^{-1}_\f\|_{p\to p}$ coincide with the corresponding ones when $\real^d$ is viewed as a discrete group (see \cite{Leeuw} for more details). We then obtain the following corollary as in \cite{Leeuw}:

\begin{cor}\label{subgroup}
 Let $\f:\real^d\to B(X, Y)$ be a strongly continuous function. Let $\psi$ be the restriction of $\f$ to $\real^k\subset \real^d$ for some $k<d$. Consider the Fourier multiplier $T_\psi$ from $L_p(\real^k; X)$ to $L_p(\real^k; Y)$. Then
 $$\|T_\psi\|_{p\to p}\le \|T_\f\|_{p\to p}\;\;\text{ and }\;\; \|T^{-1}_\psi\|_{p\to p}\le \|T^{-1}_\f\|_{p\to p}\,.$$
\end{cor}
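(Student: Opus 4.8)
The plan is to derive Corollary~\ref{subgroup} from Theorem~\ref{de Leeuw} by chaining two transference steps with one elementary restriction on the torus, exactly the way de Leeuw passes from his multiplier theorem to the subspace statement in \cite{Leeuw}. First I would observe that there is no loss of generality in taking $\real^k$ to be the coordinate subspace $\real^k\times\{0\}^{d-k}\subset\real^d$: a rotation of $\real^d$ is an isometry of every $L_p(\real^d;X)$ that conjugates $T_\f$ into another Fourier multiplier with the same norm and the same inverse norm. Write $\xi=(\xi',\xi'')\in\real^k\times\real^{d-k}$, so $\psi(\xi')=\f(\xi',0)$. The proof then consists of the following three inequalities, valid for a fixed $1<p<\8$.

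(1) The restricted symbol $\psi$ is again bounded and strongly continuous, so Theorem~\ref{de Leeuw}(ii), applied in dimension $k$, gives $\|T_\psi\|_{p\to p}\le\liminf_{t\to0}\|M_{\psi^{(t)}}\|_{p\to p}$, where $M_{\psi^{(t)}}$ is the periodic multiplier on $L_p(\T^k;X)$ with symbol $m'\mapsto\f(tm',0)$, $m'\in\ent^k$. (2) For each $t>0$, restrict the periodic multiplier $M_{\f^{(t)}}$ on $L_p(\T^d;X)$, whose symbol is $m\mapsto\f(tm)$, to the trigonometric polynomials $F$ of the form $F(z',z'')=G(z')$, i.e. constant in the last $d-k$ variables; one computes $M_{\f^{(t)}}(F)(z',z'')=M_{\psi^{(t)}}(G)(z')$, again constant in $z''$, and since $\|F\|_{L_p(\T^d;X)}=\|G\|_{L_p(\T^k;X)}$ and likewise on the $Y$-side this yields $\|M_{\psi^{(t)}}\|_{p\to p}\le\|M_{\f^{(t)}}\|_{p\to p}$. (3) The dilate $\f^{(t)}$, $\f^{(t)}(\xi)=\f(t\xi)$, is strongly continuous at every point of $\ent^d$ and inherits ({\bf H}$_1$)--({\bf H}$_4$) from $\f$, so Theorem~\ref{de Leeuw}(i) gives $\|M_{\f^{(t)}}\|_{p\to p}\le\|T_{\f^{(t)}}\|_{p\to p}=\|T_\f\|_{p\to p}$, the last equality being the dilation invariance of the Fourier multiplier norm. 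Chaining (1)--(3) gives $\|T_\psi\|_{p\to p}\le\|T_\f\|_{p\to p}$.

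The bound for the inverse norms is obtained by running the identical three-step chain with the companion inequalities: parts (i) and (ii) of Theorem~\ref{de Leeuw} each come with a second inequality governing the $T^{-1}$ and $M^{-1}$ norms, and in step (2) the lower bound $\|M_{\f^{(t)}}(F)\|_{L_p(\T^d;Y)}\ge\|M_{\f^{(t)}}^{-1}\|_{p\to p}^{-1}\|F\|_{L_p(\T^d;X)}$ applied to $F(z',z'')=G(z')$ gives $\|M_{\psi^{(t)}}^{-1}\|_{p\to p}\le\|M_{\f^{(t)}}^{-1}\|_{p\to p}$; combined with $\|T_{\f^{(t)}}^{-1}\|_{p\to p}=\|T_\f^{-1}\|_{p\to p}$ this produces $\|T_\psi^{-1}\|_{p\to p}\le\|T_\f^{-1}\|_{p\to p}$. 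The argument handles an arbitrary $k<d$ in one go, so no iteration on the codimension is needed, although one could equally reduce to the case $k=d-1$.

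I expect the only genuine care --- rather than any analytic obstacle --- to lie in the bookkeeping of hypotheses for the symbols fed into Theorem~\ref{de Leeuw}. In step (1) one uses that the proof of part (ii) (through Lemma~\ref{link2}) invokes only ({\bf H}$_1$)--({\bf H}$_2$), so a possible failure of ({\bf H}$_3$)--({\bf H}$_4$) for $\psi$ causes no trouble; in step (3) one checks that the dilates $\f^{(t)}$ retain strong continuity at $\ent^d$ together with ({\bf H}$_1$)--({\bf H}$_4$); and one verifies that the torus restriction of step (2) is compatible with the vector-valued framework, which it is, since the decomposition $F(z',z'')=G(z')$ touches neither $X$ nor $Y$. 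Should a self-contained argument avoiding the explicit invocation of Theorem~\ref{de Leeuw} be preferred, one may instead test $T_\f$ on functions $f(x',x'')=f_1(x')\,e^{-4\pi^2\e|x''|^2}$ and let $\e\to0$, reproducing the scheme of Lemma~\ref{link1} with the heat kernel placed in the transverse variables; there the sole delicate point is again the vanishing of the error term, controlled by the same interpolation between an $L_2$-Plancherel estimate and an $L_q$-estimate as in the proof of Lemma~\ref{link1}.
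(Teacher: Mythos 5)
Your proof is correct and follows exactly the de Leeuw transference chain that the paper intends (the paper itself defers the argument to \cite{Leeuw}, and your three-step scheme --- continuous-to-periodic in dimension $k$ via Theorem~\ref{de Leeuw}(ii), elementary restriction from $\T^d$ to $\T^k$ by testing on functions constant in the transverse variables, periodic-to-continuous in dimension $d$ via Theorem~\ref{de Leeuw}(i) plus dilation invariance --- is precisely the expected realization of de Leeuw's argument, with the companion inequalities handling $\|T^{-1}\|$ identically). Your bookkeeping remarks about which of ({\bf H}$_1$)--({\bf H}$_4$) are actually needed for the restricted symbol $\psi$ are accurate and worth keeping, since ({\bf H}$_4$) for $\psi$ is close to what one is proving and is indeed bypassed because only Lemma~\ref{link2} is invoked on the $\real^k$ side.
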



\section{Proofs of Theorem~\ref{fML} and Corollary~\ref{H1}}


As described in the historical comments of subsection~\ref{Littlewood-Paley g-function inequality},  we need only to show Theorem~\ref{fML} (ii) for $p<2$. We start the proof by some preliminaries.  In the sequel, $Q$ will denote a cube of $\real^d$ (with sides parallel to the axes), $|Q|$ and $\el(Q)$ being respectively its volume and side length.
For a locally integrable function $f$ on $\real^d$  we let $\la f \ra_Q$ denote the mean of $f$ over $Q$:
 $$\la f \ra_Q= \frac{1}{|Q|} \int_Q f(x)dx.$$

 As mentioned before, part (i) of Theorem~\ref{fML} for $p\le 2$ is proved by singular integrals. We state this result as a lemma for later use.

\begin{lem}\label{CZ}
 Let $\f\in\H_{\e,\d}$ and $f\in H_p(\real^d)$ with $1\le p\le2$. Then
 $$\|G^\f(f)\|_p\les_{d,\e,\d} \|f\|_{H_p}.$$
\end{lem}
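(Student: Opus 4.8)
\textbf{Proof plan for Lemma~\ref{CZ}.} The plan is to exhibit $G^\f$ as (the norm of) a vector-valued Calder\'on--Zygmund operator and then invoke the standard $H_p$--$L_p$ theory for $1\le p\le2$. Concretely, let $\H=L_2\big((0,\8),\frac{dt}{t}\big)$ and define the operator $T$ by $Tf(x)=(\f_t*f(x))_{t>0}$, a function from $\real^d$ to $\H$; then by definition $G^\f(f)(x)=\|Tf(x)\|_{\H}$, so it suffices to prove $\|Tf\|_{L_p(\real^d;\H)}\les_{d,\e,\d}\|f\|_{H_p}$. The operator $T$ is convolution with the $\H$-valued kernel $K(x)=(\f_t(x))_{t>0}$, i.e. $K(x)(t)=t^{-d}\f(x/t)$. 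First I would record the $L_2$ boundedness: for $f\in L_2(\real^d)$, Fubini and Plancherel give $\|Tf\|_2^2=\int_{\real^d}|\wh f(\xi)|^2\big(\int_0^\8|\wh\f(t\xi)|^2\frac{dt}{t}\big)d\xi$, and the inner integral is a bounded function of $\xi$ (in fact constant in $|\xi|$ by homogeneity, and finite because the decay $|\f(x)|\le(1+|x|)^{-d-\e}$ forces $|\wh\f(\eta)|\les\min(|\eta|,1)$ using the cancellation $\int\f=0$ at $0$ and integrability at $\8$); this yields $\|Tf\|_2\les_{d,\e}\|f\|_2$.

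Next I would verify the H\"ormander-type kernel regularity. The size bound $\|K(x)\|_{\H}\les_{d,\e}|x|^{-d}$ comes from $\int_0^\8 t^{-2d}|\f(x/t)|^2\frac{dt}{t}\les\int_0^\8\big(\frac{t^{-d}}{1+(|x|/t)^{d+\e}}\big)^2\frac{dt}{t}$, which after the substitution $t\mapsto|x|t$ equals $|x|^{-2d}\int_0^\8(t^{-d}(1+t^{-d-\e})^{-1})^2\frac{dt}{t}$, a convergent integral (the $\e>0$ controls $t\to0$, the power $t^{-d}$ controls $t\to\8$). For the smoothness (H\"ormander) condition $\int_{|x|\ge2|y|}\|K(x-y)-K(x)\|_\H\,dx\les_{d,\e,\d}1$, I would use the second line of \eqref{Holder}: writing the difference pointwise in $t$, $|\f_t(x-y)-\f_t(x)|=t^{-d}|\f(\frac{x-y}{t})-\f(\frac{x}{t})|\les t^{-d}\big(\frac{(|y|/t)^\d}{(1+|x-y|/t)^{d+\e+\d}}+\frac{(|y|/t)^\d}{(1+|x|/t)^{d+\e+\d}}\big)$, take the $\H$-norm in $t$, and integrate in $x$ over $|x|\ge2|y|$ (so $|x-y|\approx|x|$); scaling out $|y|$ reduces this to a finite constant times $\int_{|x|\ge2}\big(\int_0^\8(t^{-d}\frac{(1/t)^\d}{(1+|x|/t)^{d+\e+\d}})^2\frac{dt}{t}\big)^{1/2}dx\les\int_{|x|\ge2}|x|^{-d-\d}dx<\8$.

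With $L_2$ boundedness and these kernel estimates in hand, $T$ extends to a bounded Calder\'on--Zygmund operator $L_p(\real^d)\to L_p(\real^d;\H)$ for $1<p<\8$, and the standard atomic argument gives $H_1\to L_1(\real^d;\H)$ boundedness (test on an atom $a$ supported in a cube $Q$: use $L_2$ boundedness on $2Q$ via Cauchy--Schwarz, and the kernel smoothness together with $\int a=0$ off $2Q$); interpolation then covers $1<p<2$, and altogether $\|G^\f(f)\|_p=\|Tf\|_{L_p(\real^d;\H)}\les_{d,\e,\d}\|f\|_{H_p}$ for $1\le p\le2$. The main obstacle is purely bookkeeping: checking that the various $t$-integrals converge using exactly the hypotheses packaged in $\H_{\e,\d}$ (the $\e$ handling the $t\to0$ regime and the $\d$ handling the spatial smoothness / $t\to\8$ regime), and confirming that $\H$ being a Hilbert space lets the scalar CZ/atomic machinery run verbatim with $|\cdot|$ replaced by $\|\cdot\|_\H$; there is no conceptual difficulty, since the cancellation $\int\f=0$ is used only to get the $L_2$ bound and is not needed for the kernel estimates.
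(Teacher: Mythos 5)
Your approach is exactly the paper's: realize $G^\f$ as the $L_2((0,\infty),\frac{dt}{t})$-norm of a convolution operator with Hilbert-space-valued kernel $K(x)=\{\f_t(x)\}_{t>0}$, verify the kernel size and smoothness estimates \eqref{regularity} from the two pointwise conditions in \eqref{Holder}, and conclude by $L_2$-boundedness plus vector-valued Calder\'on--Zygmund/atomic theory. Your kernel-estimate computations (scaling out $|x|$, using $\e>0$ at $t\to0$ and $t^{-d}$ at $t\to\infty$; using the $\d$-H\"older bound for the H\"ormander condition) are the intended ones and are correct.

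There is, however, a genuine slip in the $L_2$ step. You assert $|\wh\f(\eta)|\les\min(|\eta|,1)$ and conclude finiteness of $\int_0^\infty|\wh\f(t\xi)|^2\,\frac{dt}{t}$, with a constant $\les_{d,\e}$ independent of $\d$. This is not enough: with only $\min(|\eta|,1)$ one gets
$$\int_0^\infty \min(t|\xi|,1)^2\,\frac{dt}{t}=\int_0^1 s\,ds+\int_1^\infty \frac{ds}{s}=\infty.$$
The cancellation $\int\f=0$ together with the decay gives smallness of $\wh\f$ near the origin ($|\wh\f(\eta)|\les|\eta|^{\min(\e,1)}$, say), but for convergence at $t\to\infty$ you also need $\wh\f$ to decay at infinity. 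That decay is not a consequence of size and cancellation alone; it comes from the spatial H\"older regularity (the second line of \eqref{Holder}), which yields $|\wh\f(\eta)|\les|\eta|^{-\d}$ by the usual half-period translation argument $\wh\f(\eta)=\tfrac12\int(\f(x)-\f(x-h))e^{-2\pi i x\cdot\eta}\,dx$ with $h=\eta/(2|\eta|^2)$. Thus the correct intermediate bound is $|\wh\f(\eta)|\les_{d,\e,\d}\min(|\eta|^{\min(\e,1)},|\eta|^{-\d})$, and the $L_2$ constant depends on $\d$ as well. With this fix the rest of your argument goes through as written.
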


Indeed, consider  the Hilbert space valued kernel  $K$ defined by $K(x)=\{\f_t(x)\}_{t>0}$ for $x\in\real^d$, that is, $K$ is a function from $\real^d$ to $L_2(\real_+)$, where $\real_+$ is equipped with the measure $\frac{dt}t$.  We  use $K$ to denote the associated singular integral too:
   $$K(f)=\int_\real K(x-y)f(y)dy.$$
 Then
 $$
G^{\f}(f)(x)=\big\|K(f)(x)\big\|_{L_2(\real_+)}\,,\quad x\in\real^d.
 $$
 It is easy to show that  $K$ satisfies the  following regularities (see below for a proof):
 \beq\label{regularity}
 \big\|K(x)\big\|_{L_2(\real_+)} \les_{\e} \frac{1}{|x|^d}\;\text{ and }\; \big\|K(x+z)-K(x)\big\|_{L_2(\real_+)} \les_{\e,\d} \frac{|z|^\d}{|x|^{d+\d}},\quad x, z\in\real^d,\; |x|>2|z|.
 \eeq
 Thus the lemma follows from the $L_2$-boundedness of $K$ and the Calder\'on-Zygmund theory.

We will need a reinforcement of the previous lemma for Wilson's intrinsic square functions defined by
\begin{align*}
  S_{\e, \d}(f)(x)
  =\Big(\int_{|y-x|<t}\,\sup_{\f\in\H_{\e, \d}}\big|\f_t*f(y)\big|^2\,\frac{dy\,dt}{t^{d+1}}\Big)^{\frac12}.
    \end{align*}
 This square function can also be expressed as a singular integral operator. Let the cone $\Gamma=\{(y, t)\in\real^{d+1}_+: |y|<t\}$ be equipped with the measure $\frac{dydt}{t^{d+1}}$. Let $X$ be the Banach space of square integrable functions on $\Gamma$ with values in $\el_\8(\H_{\e, \d})$:
 $$X=L_2\big(\Gamma; \el_\8(\H_{\e, \d})\big).$$
This time, the convolution kernel $K$ is an $X$-valued kernel: for $x\in\real^d$, $K(x)$ is defined as follows:
 $$K(x): \Gamma\to \el_\8(\H_{\e, \d}),\quad (y, t)\mapsto \{\f_t(x+y)\}_{\f\in\H_{\e, \d}}.$$
Then
 $$
S_{\e, \d}(f)(x)=\big\|K(f)(x)\big\|_{X}\,,\quad x\in\real^d.
 $$
Let us show that this new kernel $K$ satisfies \eqref{regularity} too. By \eqref{Holder}, we have
 \begin{align*}
 \big\|K(x)\big\|_{X}^2
 &=\int_\Gamma\sup_{\f\in\H_{\e, \d}}\big|\f_t(x+y)\big|^2\,\frac{dy\,dt}{t^{d+1}}\\
 &\le\int_\Gamma\big[\frac1{t^d}\,\frac1{\big(1+\frac{|x+y|}t\big)^{d+\e}}\big]^2\,\frac{dy\,dt}{t^{d+1}}\\
&\les_{d,\e}\int_0^\8\big[\frac1{t^d}\,\frac1{\big(1+\frac{|x|}t\big)^{d+\e}}\big]^2\,\frac{dt}{t}
\les_{d, \e} \frac{1}{|x|^{2d}}.
 \end{align*}
This gives the first estimate of \eqref{regularity}. For the second, let $x, z\in\real^d$ with  $|x|>2|z|$. Applying \eqref{Holder} once more, we get
 \begin{align*}
 \big\|K(x+z)-K(x)\big\|_{X}^2
 \les_{\e,\d}\int_\Gamma\big[\frac1{t^{d+\d}}\,\frac{|z|^\d}{\big(1+\frac{|x+y|}t\big)^{d+\e+\d}}\big]^2\,\frac{dy\,dt}{t^{d+1}}
\les_{d, \e,\d} \frac{|z|^{2\d}}{|x|^{2(d+\d)}}.
 \end{align*}
By \cite{Wilson07},  $S_{\e, \d}$ is bounded on $L_2(\real^d)$. Thus we deduce the following

\begin{lem}\label{CZb}
 Let  $1\le p\le2$. Then
 $$\|S_{\e, \d}(f)\|_p\les_{d,\e,\d} \|f\|_{H_p}, \quad f\in H_p(\real^d).$$
\end{lem}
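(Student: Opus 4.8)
The plan is to deduce Lemma~\ref{CZb} from the vector-valued Calder\'on--Zygmund theory in exactly the way Lemma~\ref{CZ} was obtained, using the two ingredients already in hand: the $L_2$-boundedness of $S_{\e,\d}$ from \cite{Wilson07} and the kernel regularities \eqref{regularity} verified just above. Recall that $S_{\e,\d}(f)(x)=\|K(f)(x)\|_X$ with $X=L_2(\Ga;\el_\8(\H_{\e,\d}))$, so $\|S_{\e,\d}(f)\|_p=\|K(f)\|_{L_p(\real^d;X)}$ and it suffices to prove that the singular integral $K$ maps $H_p(\real^d)$ boundedly into $L_p(\real^d;X)$ for $1\le p\le2$.

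First I would treat the endpoint $p=1$ by the atomic method. Let $a$ be an $H_1$-atom supported on a cube $Q$, so $\int a=0$ and $\|a\|_\8\le|Q|^{-1}$; write $y_Q$ for the center of $Q$ and $Q^*=2\sqrt d\,Q$. Splitting $\real^d=Q^*\cup(Q^*)^c$: on $Q^*$ I use Cauchy--Schwarz together with Wilson's $L_2$-bound, $\int_{Q^*}\|K(a)(x)\|_X\,dx\le|Q^*|^{1/2}\|K(a)\|_{L_2(\real^d;X)}\les_{d,\e,\d}|Q|^{1/2}\|a\|_2\le1$; on $(Q^*)^c$ the cancellation of $a$ gives $K(a)(x)=\int_Q(K(x-y)-K(x-y_Q))\,a(y)\,dy$ with $|x-y_Q|>2|y-y_Q|$ throughout, so the second estimate of \eqref{regularity} yields $\|K(a)(x)\|_X\les_{\e,\d}\int_Q\frac{|y-y_Q|^\d}{|x-y_Q|^{d+\d}}|a(y)|\,dy$, and integrating over $|x-y_Q|\gtrsim\el(Q)$ together with $\int_Q|y-y_Q|^\d|a(y)|\,dy\les\el(Q)^\d$ gives once more a bound $\les_{d,\e,\d}1$. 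Hence $\|K(a)\|_{L_1(\real^d;X)}\les_{d,\e,\d}1$ uniformly in $a$, and summing over an atomic decomposition gives $\|S_{\e,\d}(f)\|_1\les_{d,\e,\d}\|f\|_{H_1}$.

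For $1<p<2$ I would combine this with the $L_2$-bound. The first estimate of \eqref{regularity} makes $K$ a standard $X$-valued Calder\'on--Zygmund kernel, so the usual Calder\'on--Zygmund decomposition (which uses only the norm bounds on the kernel and is therefore insensitive to the target being Banach-valued) shows that $K$ is of weak type $(1,1)$ from $L_1(\real^d)$ to $L_{1,\8}(\real^d;X)$; interpolating with the $L_2\to L_2(X)$ bound by the Marcinkiewicz theorem gives $\|S_{\e,\d}(f)\|_p=\|K(f)\|_{L_p(\real^d;X)}\les_{d,\e,\d}\|f\|_p=\|f\|_{H_p}$ for $1<p\le2$, since $H_p(\real^d)=L_p(\real^d)$ in that range. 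Together with the atomic estimate at $p=1$ this proves the lemma.

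I do not expect a serious obstacle here: everything reduces to the standard vector-valued Calder\'on--Zygmund toolbox, the only points requiring a line of care being the bookkeeping in the $H_1$-atom estimate (the $\el(Q)^\d$ gain from the smoothness of $K$ exactly cancels the tail integral $\int_{|x-y_Q|\gtrsim\el(Q)}|x-y_Q|^{-d-\d}\,dx$) and the observation that the Calder\'on--Zygmund decomposition argument carries over verbatim to $X$-valued kernels.
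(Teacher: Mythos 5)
Your proposal is correct and follows essentially the same route as the paper: the paper obtains Lemma~\ref{CZb} by expressing $S_{\e,\d}$ as an $X$-valued singular integral, verifying the kernel estimates \eqref{regularity}, and invoking Wilson's $L_2$-bound together with vector-valued Calder\'on--Zygmund theory; you have simply written out the standard details that this implicit citation covers (the $H_1$-atom estimate at $p=1$ and the weak-$(1,1)$/Marcinkiewicz interpolation for $1<p\le2$), and those details are sound.
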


Our proof of Theorem~\ref{fML} (ii) for $p< 2$ is modelled on Mei's argument \cite{Mei2007} (see also the proof of Theorem~1.3 of \cite{XXX}). We will need a variant of the usual BMO space. For any locally integrable function $f$ on $\real^d$ define
 $$f^\sharp(x)=\sup_{x\in Q}\Big(\frac{1}{|Q|}\int_Q\big|f(x)-\la f\ra_Q\big|^2dx\Big)^{\frac 12},$$
and for $2<q\le\8$  let
  $$\mathrm{BMO}_q(\real^d)=\big\{f: f^\sharp\in L_q(\real^d)\big\}\;\text{ and }\; \big\|f\big\|_{\BMO_q}=\| f^\sharp\|_q .$$
Note that $\mathrm{BMO}_\8(\real^d)$ coincides with the usual $\mathrm{BMO}(\real^d)$.

The BMO space is closely related to Carleson measures via the following maximal function
 $$C^\f(f)(x)=\sup_{x\in Q}\Big(\frac{1}{|Q|}\int_{T(Q)}\big|\f_t*f(y)\big|^2\,\frac{dy\,dt}t\Big)^{\frac 12},$$
where $T(Q)=Q\times (0,\,\el(Q)]\subset\real^{d+1}_+$.

\medskip

The following inequality is known, it can be shown by adapting  the proof of \cite[Theorem~IV.4.3]{Stein1993}.

\begin{lem}\label{carleson-BMO}
 Let $\f\in\H_{\e, \d}$ and $f$ be any nice function on $\real^d$. Then
   $C^\f(f)\les_{d,\e,\d} f^\sharp.$
  \end{lem}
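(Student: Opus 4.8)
# Proof Proposal for Lemma~\ref{carleson-BMO}

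The plan is to follow the classical Carleson-measure estimate for BMO functions (the one underlying the John--Nirenberg-type characterization in \cite[Theorem~IV.4.3]{Stein1993}), adapting it from the Poisson kernel to a general $\f\in\H_{\e,\d}$. The point to verify is that the only properties of the Poisson kernel used there are the size and smoothness bounds together with the cancellation $\int\f=0$, all of which are built into \eqref{Holder}.

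First I would fix a cube $Q$ with center $x_Q$ and split $f$ around $Q$. Write $f=\la f\ra_{2Q}+ (f-\la f\ra_{2Q})\un_{2Q} + (f-\la f\ra_{2Q})\un_{(2Q)^c}=:c+ f_1+f_2$, for an appropriate dilate $2Q$ (or $3Q$). The constant term $c$ is annihilated by $\f_t*$ because $\int_{\real^d}\f=0$, so $\f_t*f=\f_t*f_1+\f_t*f_2$. For the local part $f_1$, I would bound
$$
\frac1{|Q|}\int_{T(Q)}|\f_t*f_1(y)|^2\,\frac{dy\,dt}{t}
\le \frac1{|Q|}\int_0^\infty\int_{\real^d}|\f_t*f_1(y)|^2\,\frac{dy\,dt}{t}
\les_{d,\e,\d}\frac1{|Q|}\int_{\real^d}|f_1|^2
=\frac1{|Q|}\int_{2Q}|f-\la f\ra_{2Q}|^2,
$$
where the middle inequality is the $L_2(\real^d)$-boundedness of $f\mapsto G^\f(f)$ (equivalently the $L_2$-bound for the kernel $K$ of Lemma~\ref{CZ}, via Plancherel and $|\wh\f(t\xi)|^2\frac{dt}{t}$ integrable); and the last quantity is $\les (f^\sharp(x))^2$ for any $x\in Q$ by comparing $\la f\ra_{2Q}$ with $\la f\ra_Q$.

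For the far part $f_2$, I would use the size bound on $\f_t$ from \eqref{Holder}: for $(y,t)\in T(Q)$, so $y\in Q$ and $0<t\le\el(Q)$, and for $z\in(2Q)^c$ one has $|y-z|\gtrsim \el(Q)\ge t$, hence
$$
|\f_t*f_2(y)|\le \int_{(2Q)^c}\frac1{t^d}\,\frac{1}{(1+\frac{|y-z|}{t})^{d+\e}}\,|f(z)-\la f\ra_{2Q}|\,dz
\les_{d,\e} \int_{(2Q)^c}\frac{t^\e}{|y-z|^{d+\e}}\,|f(z)-\la f\ra_{2Q}|\,dz,
$$
and then decompose $(2Q)^c$ into dyadic annuli $2^{k+1}Q\setminus 2^kQ$, $k\ge1$. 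On each annulus $|y-z|\approx 2^k\el(Q)$, and $\int_{2^{k+1}Q}|f-\la f\ra_{2Q}|\les 2^{kd}|Q|\,(k+1)f^\sharp(x)$ by the standard telescoping of averages (each step $\la f\ra_{2^{j+1}Q}-\la f\ra_{2^jQ}$ is controlled by $f^\sharp(x)$). This yields a bound $|\f_t*f_2(y)|\les_{d,\e} f^\sharp(x)\,(t/\el(Q))^\e\sum_k (k+1)2^{-k\e}$, geometrically summable, so $|\f_t*f_2(y)|\les_{d,\e,\d} f^\sharp(x)\,(t/\el(Q))^\e$. Integrating,
$$
\frac1{|Q|}\int_{T(Q)}|\f_t*f_2(y)|^2\,\frac{dy\,dt}{t}
\les_{d,\e,\d}(f^\sharp(x))^2\int_0^{\el(Q)}\Big(\frac{t}{\el(Q)}\Big)^{2\e}\frac{dt}{t}
\les_{d,\e}(f^\sharp(x))^2.
$$
Combining the three pieces and taking the supremum over $Q\ni x$ gives $C^\f(f)(x)\les_{d,\e,\d} f^\sharp(x)$.

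The main obstacle is the far-part estimate: making sure the dyadic-annulus sum converges one must carry the logarithmic factors $(k+1)$ coming from the growth of $\la f\ra_{2^kQ}$ and check they are absorbed by the $2^{-k\e}$ decay supplied by the extra $\e$ in the size bound $|\f(x)|\le (1+|x|)^{-d-\e}$. (The smoothness/Hölder part of \eqref{Holder} is not actually needed here, only the size bound and the cancellation; it enters, via Lemma~\ref{CZ}, only through the $L_2$-boundedness used for the local term.) Everything else is routine bookkeeping, essentially verbatim from \cite[Theorem~IV.4.3]{Stein1993}.
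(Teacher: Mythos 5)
Your proof is correct and follows the same route the paper indicates. The paper's proof of Lemma~\ref{carleson-BMO} is simply the citation to \cite[Theorem~IV.4.3]{Stein1993}; your writeup carries out precisely that adaptation (split $f$ around $2Q$, use the cancellation $\int\f=0$ to kill the constant, handle the local part by the $L_2$-boundedness of $G^\f$, and control the far part by the size bound with telescoping of averages, the $(k+1)$ factor being absorbed by the $2^{-k\e}$ decay).
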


We now arrive at the key step of our argument.

\begin{lem}\label{duality1}
Let $\f,\p\in\H_{\e,\d}$ satisfy \eqref{reproduce}. Let $1\le p<2$. Then
 $$
 \Big|\int_{\real^d}fg\Big|\les_{d,\e,\d} \big\|G^\f(f)\big\|_p^{\frac{p}2}\big\|f\big\|_{H_p}^{1-\frac{p}2}\big\|g\big\|_{\BMO_{p'}}
 $$
 for any sufficiently nice functions $f\in H_p(\real^d)$ and $g\in \BMO_{p'}(\real^d)$.

 \end{lem}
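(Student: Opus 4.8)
The plan is to derive the estimate from the Calder\'on reproducing formula together with a Calder\'on--Zygmund decomposition of the upper half-space $\real^{d+1}_+$ at the dyadic levels of a maximal function of $f$, following the scheme of Mei~\cite{Mei2007} (see also the proof of Theorem~1.3 of \cite{XXX}). Since $\H_{\e,\d}$ is stable under the reflection $\p\mapsto\overline{\p(-\,\cdot\,)}$, I may assume, after performing this replacement, that \eqref{reproduce} holds in the conjugated form $\int_0^\8\wh\f(t\xi)\,\overline{\wh\p(t\xi)}\,\frac{dt}{t}=1$ for $\xi\ne0$. Plancherel's theorem then gives, for sufficiently nice $f$ and $g$ (and replacing $g$ by $\bar g$, which changes none of the four norms in the statement),
$$\int_{\real^d}f\,\overline g=\int_0^\8\!\!\int_{\real^d}(\f_t*f)(y)\,\overline{(\p_t*g)(y)}\,\frac{dy\,dt}{t},$$
the absolute convergence of the right-hand side being guaranteed by the decay and H\"older regularity in \eqref{Holder} together with the niceness of $f$ and $g$. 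It therefore suffices to bound this double integral.

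I would then introduce the $\f$-nontangential maximal function $\mathcal M^\f f(x)=\sup_{|y-x|<t}|\f_t*f(y)|$, which satisfies $\|\mathcal M^\f f\|_p\les_{d,\e,\d}\|f\|_{H_p}$ for all $1\le p<\8$; for $1<p<2$ this is immediate because $H_p=L_p$ and $\mathcal M^\f f\les Mf$ with $M$ the Hardy--Littlewood maximal operator, while for $p=1$ it follows by comparing $\mathcal M^\f f$ with a grand maximal function built from H\"older bump functions. For $k\in\ent$ set $O_k=\{\mathcal M^\f f>2^k\}$ and $\widehat{O_k}=\{(y,t)\in\real^{d+1}_+:B(y,t)\subset O_k\}$; these regions decrease in $k$, they exhaust $\real^{d+1}_+$, and $|O_k|\le 2^{-kp}\|\mathcal M^\f f\|_p^p$. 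Decomposing the double integral as $\sum_{k\in\ent}\iint_{\widehat{O_k}\setminus\widehat{O_{k+1}}}$, the decisive elementary remark is that on the $k$-th shell one has $|\f_t*f(y)|\le 2^{k+1}$: since $B(y,t)\not\subset O_{k+1}$, there is $x_0\in B(y,t)$ with $\mathcal M^\f f(x_0)\le 2^{k+1}$, and then $|\f_t*f(y)|\le\mathcal M^\f f(x_0)$ because $|x_0-y|<t$. The Cauchy--Schwarz inequality gives $\big|\iint_{\widehat{O_k}\setminus\widehat{O_{k+1}}}(\f_t*f)\,\overline{(\p_t*g)}\,\frac{dy\,dt}{t}\big|\le A_kB_k$, where $A_k^2=\iint_{\widehat{O_k}\setminus\widehat{O_{k+1}}}|\f_t*f|^2\frac{dy\,dt}{t}$ and $B_k^2=\iint_{\widehat{O_k}\setminus\widehat{O_{k+1}}}|\p_t*g|^2\frac{dy\,dt}{t}$.

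For the $g$-factor I would use that the $k$-th shell lies in a fixed dilate of the tent over $O_k$; a Whitney decomposition of $O_k$ combined with Lemma~\ref{carleson-BMO} then yields $B_k^2\les_{d,\e,\d}\int_{O_k^*}C^\p(g)^2\les_{d,\e,\d}\int_{O_k^*}(g^\sharp)^2$ for a set $O_k^*$ with $|O_k^*|\les|O_k|$. Since $p'>2$ and $\frac1p+\frac1{p'}=1$, H\"older's inequality gives $\int_{O_k^*}(g^\sharp)^2\le\|g^\sharp\|_{p'}^2\,|O_k^*|^{(2-p)/p}\les\|g\|_{\BMO_{p'}}^2\big(2^{-kp}\|f\|_{H_p}^p\big)^{(2-p)/p}$, that is $B_k\les_{d,\e,\d}\|g\|_{\BMO_{p'}}\,\|f\|_{H_p}^{1-p/2}\,2^{-k(1-p/2)}$. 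Consequently
$$\Big|\int_{\real^d}f\,\overline g\Big|\les_{d,\e,\d}\sum_{k\in\ent}A_kB_k\les_{d,\e,\d}\|g\|_{\BMO_{p'}}\,\|f\|_{H_p}^{1-p/2}\sum_{k\in\ent}2^{-k(1-p/2)}A_k,$$
and the lemma will follow once one proves the bound $\sum_{k\in\ent}2^{-k(1-p/2)}A_k\les_{d,\e,\d}\|G^\f f\|_p^{p/2}$.

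This last bound is the crux. A natural strategy is to split, for each base point $y\in O_k$, the slice of the $k$-th shell into its interval of admissible heights, separating the part lying above $O_k\setminus O_{k+1}$ (where the heights run down to $0$) from the part above $O_{k+1}$, and to estimate $A_k^2$ by combining the pointwise identity $\int_0^\8|\f_t*f(y)|^2\frac{dt}{t}=G^\f(f)(y)^2$ — which over the first portion yields $\int_{O_k\setminus O_{k+1}}(G^\f f)^2$, a quantity whose sum over $k$ equals $\|G^\f f\|_2^2$ — with the uniform bound $|\f_t*f(y)|\le 2^{k+1}$ and the measure bound $|O_k|\le 2^{-kp}\|f\|_{H_p}^p$ on the second. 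Organizing these two competing estimates for $A_k$ so that the weighted series $\sum_k 2^{-k(1-p/2)}A_k$ can actually be summed to something bounded by $\|G^\f f\|_p^{p/2}$ — without logarithmic loss and despite the fact that each bound separately is only level-uniform — is exactly where Mei's bookkeeping \cite{Mei2007} enters, and I expect this to be the main obstacle in writing out the proof; here it is essential that $p<2$, so that $1-p/2>0$. The remaining ingredients — the reproducing-formula reduction, the Carleson estimate through Lemma~\ref{carleson-BMO}, and the per-shell Cauchy--Schwarz splitting — are routine. Throughout, the customary enlargement of the sets $O_k$ by a Hardy--Littlewood density condition is used tacitly when passing between conical and vertical integrals.
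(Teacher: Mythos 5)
Your decomposition of $\real^{d+1}_+$ into shells $\widehat{O_k}\setminus\widehat{O_{k+1}}$ by levels of a nontangential maximal function, with Cauchy--Schwarz applied shell-by-shell, is genuinely different from what the paper does, and it runs into a real gap exactly where you flag one. The estimate $\sum_{k}2^{-k(1-p/2)}A_k\les\|G^\f f\|_p^{p/2}$ is not established, and the strategy you sketch for it is problematic: the Fubini identity $\int_0^\8|\f_t*f(y)|^2\,\frac{dt}{t}=G^\f(f)(y)^2$ at a point $y\in O_k\setminus O_{k+1}$ does not localize to the heights $t$ with $(y,t)$ in the $k$-th shell, so that term already overcounts; and the ``second portion'' over $O_{k+1}$ is integrated against $\frac{dt}{t}$ over an interval of the form $[\mathrm{dist}(y,O_{k+1}^c),\mathrm{dist}(y,O_k^c))$, whose $\frac{dt}{t}$-measure is a logarithm that is not uniformly bounded, so the uniform bound $|\f_t*f|\le 2^{k+1}$ together with $|O_{k+1}|\le 2^{-(k+1)p}\|f\|_{H_p}^p$ does not control $A_k^2$. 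You also cannot simply sum $A_k^2$, since that only produces $\|G^\f f\|_2^2$, not an $L_p$-quantity. So the ``crux'' is not just bookkeeping; the shell decomposition loses the structure needed to produce $\|G^\f f\|_p^{p/2}$.

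The paper's proof avoids this entirely by choosing a smarter weight in the Cauchy--Schwarz step. Instead of decomposing the half-space, it inserts the truncated square function $G(x,t)=\bigl(\int_t^\8|\f_s*f(x)|^2\frac{ds}{s}\bigr)^{1/2}$ into the reproducing-formula integral, writing the integrand as $\bigl[\f_t*f\cdot G(x,t)^{(p-2)/2}\bigr]\cdot\bigl[G(x,t)^{(2-p)/2}\p_t*g\bigr]$. The first factor's $L_2(\frac{dx\,dt}{t})$-norm squared is then $\mathrm A^2=\int G(x,t)^{p-2}|\f_t*f(x)|^2\,\frac{dx\,dt}{t}$, and the crucial point is that $|\f_t*f(x)|^2\,\frac{dt}{t}=-\partial_t\bigl(G(x,t)^2\bigr)\,dt$, so an integration by parts together with the monotone decrease of $G(\cdot,t)$ in $t$ (and $p-1\ge0$) gives $\mathrm A^2\le 2\|G^\f f\|_p^p$ directly --- no interpolation, no shell sum, no loss. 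The factor $\|f\|_{H_p}^{1-p/2}\|g\|_{\BMO_{p'}}$ comes entirely from the second term $\mathrm B$, which the paper handles by the pointwise bound $G(x,t)\les S(x,t)$ (Wilson's intrinsic square function), a dyadic telescoping $\mathbb{S}(\cdot,k)^{2-p}-\mathbb{S}(\cdot,k-1)^{2-p}$, the Carleson estimate $C^\p(g)\les g^\sharp$ from Lemma~\ref{carleson-BMO}, and finally Lemma~\ref{CZb} to replace $\|S_{\e,\d}(f)\|_p$ by $\|f\|_{H_p}$. If you want to pursue your own route, the place to look is the $A$-side: you need a replacement for the integration by parts, not better bookkeeping for the shells. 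As written, the proposal does not prove the lemma.
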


\begin{proof}
Fix (sufficiently nice) functions $f\in H_p(\real^d)$ and $g\in \BMO_{p'}(\real^d)$. We need to consider a truncated version of $G^\f(f)$:
 \beq\label{truncated G}
 G(x, t) =\Big(\int_t^\8 |\f_s*f (x)|^2\frac{ds}{s}\Big)^{\frac12}\,,\quad x\in\real^d,\; t\ge0.
 \eeq
By approximation, we can assume that $G(x,t)$ never vanishes. By \eqref{reproduce}, we have
  \begin{align*}
  \int_{\real^d}fg
 &=\int_{\real^{d+1}_+}\f_t*f(x)\, \p_t*g(x)\,\frac{dx\, dt}{t}\\
 &=\int_{\real^{d+1}_+} \big[\f_t*f(x)G(x,t)^{\frac{p-2}2}\big]\cdot
 \big[G(x, t)^{\frac{2-p}2} \p_t*g(x)\big]\,\frac{dx\,dt}{t}.
 \end{align*}
Thus by the Cauchy-Schwarz inequality,
 \begin{align*}
  \Big|\int_{\real^d}fg\Big|\le {\rm A}\cdot {\rm B},
    \end{align*}
  where
   \begin{align*}
 {\rm A}^2&=\int_{\real^{d+1}} G(x,t)^{p-2}|\f_t*f (x)|^2 \,\frac{ dx\,dt}{t},\\
  {\rm B}^2&=\int_{\real^{d+1}} G(x,t)^{2-p}|\p_t*g (x)|^2  \,\frac{ dx\,dt}{t}.
  \end{align*}
The term ${\rm A}$ is estimated as follows
  \begin{align*}
    {\rm A}^2
  =- \int_{\real^{d}}\int_0^\8G(x,t)^{p-2}  \frac{\partial}{\partial t}\big(G(x,t)^2\big)dt\, dx
 =-2 \int_{\real^{d}}\int_0^\8G(x, t)^{p-1} \frac{\partial}{\partial t} G(x, t)\,dt\, dx.
  \end{align*}
Since $G(\cdot,t)$ is decreasing in $t$, $G(\cdot,t)^{p-1}\le G(x, 0)^{p-1}=G^\f(f)(x)^{p-1}$. Thus
  $$
  {\rm A}^2
 \le-2 \int_{\real^{d}}G^\f(f)(x)^{p-1}\int_0^\8  \frac{\partial}{\partial t} G(x,t)\,dt\, dx
  =2\int_{\real^{d}}G^\f(f)(x) ^pds=2\|G^\f(f)\|_{p}^p\,.
 $$

The estimate of B is harder. We will need two more variants of $S_{\e, \d}(f)$. The first one is defined as before for $G(\cdot,t)$:
 $$
 S(x, t)^2
  =\int_t^\8\int_{|y-x|<s-\frac{t}2}\,\sup_{\f\in\H_{\e, \d}}\big|\f_s*f(y)\big|^2\,\frac{dy\,ds}{s^{d+1}}\,,\quad x\in\real^d,\; t\ge0.
 $$
To introduce the second, let $\D_k$ be the family of dyadic cubes of side length $2^{-k}$, and let $c_Q$ denote the center of a cube $Q$. Define
$$
 \mathbb S(x, k)^2=\int_{\sqrt d\,2^{-k}}^\8\int_{|y-c_Q|<s}\,\sup_{\f\in\H_{\e, \d}}\big|\f_s*f(y)\big|^2\,\frac{dy\,ds}{s^{d+1}}\,,\;\;\textrm{ if }\;\;
 x\in Q\in\D_k,\; k\in\ent.
$$
By definition, we have
 \begin{enumerate}[$\bullet$]
 \item $\mathbb S(\cdot, k)$ is increasing in $k$;
 \item  $\mathbb S(\cdot, k)$ is constant on every $Q\in\D_k$;
 \item $\mathbb S(\cdot, -\8)=0$  and $\mathbb S(\cdot, \8)=S(x, 0)=S_{\e, \d}(f)$.
 \end{enumerate}
 On the other hand, if $s\ge t\ge \sqrt d\,2^{-k}$ and $x\in Q\in\D_k$, then $B(x, s-\frac{t}2)\subset B(c_Q, s)$. Here $B(x, r)$ denotes the ball of center $x$ and radius $r$. It then follows that
 $$S(\cdot, t)\le \mathbb S(\cdot, k)\;\text{ on every}\; Q\in\D_k\;\text{ whenever } \; t\ge \sqrt d\,2^{-k}.$$
Here, the crucial observation is the elementary pointwise inequality:  $G(x, t)\les_{d,\e, \d} S(x, t).$ This inequality is easily proved  by the arguments of \cite{Wilson07}. Indeed, by a lemma due to Uchiyama \cite{Uch} (see also Lemma~3 of \cite{Wilson07}), we can assume that the function $\f$ defining $G(x, t)$ in \eqref{truncated G} is supported in the unit ball of $\real^d$. Then we get $G(x, t)\les_{d,\e, \d} S(x, t)$ exactly as Wilson did on page 784 of \cite{Wilson07}.

After these preparations, we are ready to estimate the term B.  We have
   \begin{align*}
  {\rm B}^2
  &\les_{d,\e,\d}\int_{\real^{d+1}} S(x,t)^{2-p}|\p_t*g (x)|^2  \,\frac{ dx\,dt}{t}\\
  &=\sum_{k\in\ent}\sum_{Q\in\D_k}\int_{Q}\int_{\sqrt d\,2^{-k}}^{\sqrt d\,2^{-k+1}}S(x,t)^{2-p}|\p_t*g (x)|^2 \,\frac{dt}{t}\,dx \\
 &\le \sum_{k\in\ent}\sum_{Q\in\D_k}\int_{Q}\int_{\sqrt d\,2^{-k}}^{\sqrt d\,2^{-k+1}}\mathbb S(x, k)^{2-p}
 |\p_t*g (x)|^2 \,\frac{dt}{t}\,dx\\
 &=\int_{\real^d}\sum_{k\in\ent}\sum_{j\le k} D(x, j) \int_{\sqrt d\,2^{-k}}^{\sqrt d\,2^{-k+1}}
 |\p_t*g (x)|^2 \,\frac{dt}{t} \,dx,
  \end{align*}
where $D(x, k)=\mathbb S(x, k)^{2-p} -\mathbb S(x, k-1)^{2-p}$. Thus
   \begin{align*}
 {\rm B}^2
 &\les_{d,\e,\d}\int_{\real^d}\sum_{j} D(x, j)\sum_{k\ge j}   \int_{\sqrt d\,2^{-k}}^{\sqrt d\,2^{-k+1}}
 |\p_t*g (x)|^2 \,\frac{dt}{t} \,dx \\
 &=\sum_{j} \sum_{Q\in\D_j}\int_QD(x, j)\int_{0}^{\sqrt d\,2^{-j+1}}
 |\p_t*g (x)|^2 \,\frac{dt}{t} \,dx.
   \end{align*}
 Since $D(\cdot, j)$ is constant on every $Q\in\D_j$, we have
  \begin{align*}
 {\rm B}^2
 &\les_{d,\e,\d}\sum_{j} \sum_{Q\in\D_j}D(x, j)\un_Q(x)\int_Q\int_{0}^{2\sqrt{d}\,\el(Q)}
 |\p_t*g (x)|^2 \,\frac{dt}{t} \,dx \\
 &\les_{d}\sum_{j} \sum_{Q\in\D_j}D(x, j)\un_Q(x)\min_{y\in Q}C^\p(g)(y)^2\,|Q|\\
 &\le\sum_{j} \sum_{Q\in\D_j}\int_QD(x, j)C^\p(g)(x)^2dx\\
 &\le\int_{\real^d}\sum_{j}D(x, j)C^\p(g)(x)^2dx\\
 &=\int_{\real^d} S_{\e, \d}(f)(x)^{2-p}C^\p(g)(x)^2dx\\
 &\le\big\|S_{\e, \d}(f)\big\|_p^{2-p}\big\|C^\p(g)\big\|_{p'}^{2}.
  \end{align*}
 By Lemma~\ref{CZb},
  $\big\|S_{\e, \d}(f)\big\|_p\les_{d,\e,\d}\big\|f\big\|_{H_p}.$
 Hence,
  $$ {\rm B} \les_{d,\e,\d}\big\|f\big\|_{H_p}^{1-\frac{p}2}\big\|C^\p(g)\big\|_{p'}.$$
 Combining the estimates of A and B together with Lemma~\ref{carleson-BMO}, we get the desired assertion.
  \end{proof}

 \medskip

The preceding lemma implies the following
  $$
 \Big|\int_{\real^d}fg\Big|\les_{d}\big\|f\big\|_{H_p} \big\|g\big\|_{\BMO_{p'}}.
 $$
This shows that every function $g\in\BMO_{p'}(\real^d)$ induces a continuous linear functional on $H_p(\real^d)$. Like the $H_1$-BMO duality theorem, the converse is true too. The following lemma is known, its noncommutative analogue is  \cite[Theorem~4.4]{Mei2007}. We include a proof by adapting Mei's argument.

\begin{lem}\label{duality2}
 Let $1\le p<2$. Then every continuous functional $\el$ on $H_p(\real^d)$ is represented by a function $g\in\BMO_{p'}(\real^d)$:
  $$\el(f)=\int_{\real^d}fg,\quad \forall\, f\in H_1(\real^d)\cap L_2(\real^d).$$
 Moreover,
 $$(2-p)\big\|g\big\|_{\BMO_{p'}}\les_d\big\|\el\big\|_{H_p(\real^d)^*}\les_d \big\|g\big\|_{\BMO_{p'}}.$$
\end{lem}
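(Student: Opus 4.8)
The plan is to run the standard $H_p$--BMO duality argument in the style of Fefferman--Stein, but with the $L^2$-oscillation (rather than $L^1$-oscillation) version of BMO, following Mei's scheme. The right-hand estimate $\|\el\|_{H_p^*}\les_d\|g\|_{\BMO_{p'}}$ is exactly what the preceding paragraph provides via Lemma~\ref{duality1} (applied with a fixed nondegenerate pair $\f,\p\in\H_{\e,\d}$), so only the lower estimate $(2-p)\|g\|_{\BMO_{p'}}\les_d\|\el\|_{H_p^*}$ and the representation need to be established. For the representation, I would first identify the predual concretely: since $H_p(\real^d)\cap L_2(\real^d)$ is dense in $H_p(\real^d)$, a continuous functional $\el$ restricted to this subspace is given by pairing against some $g\in L_2(\real^d)$ (using that $L_2$ is its own dual and atoms/nice functions are dense), and the task is to show $g\in\BMO_{p'}$ with the claimed bound on $\|g\|_{\BMO_{p'}}=\|g^\sharp\|_{p'}$.

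The core of the argument is the lower bound on $\|g^\sharp\|_{p'}$. The standard device is to test $\el$ against functions built from $g$ itself localized over a family of cubes. Concretely, fix $\varepsilon$-close to the supremum defining $\BMO_{p'}$ and pick, for each point, a cube $Q$ nearly attaining $g^\sharp$; then form $f=\sum_Q c_Q (g-\la g\ra_Q)\un_Q$ (or a smoothed/Calder\'on-reproducing-formula version thereof) with coefficients $c_Q$ chosen so that $\|f\|_{H_p}\les 1$ while $\int fg = \int f(g-\la g\ra_Q)$ is comparable to a suitable $\ell_{p'}$-type sum of the local oscillations. The key input here is that such an $f$, being (essentially) a superposition of $L^2$-normalized mean-zero bumps, has controlled $H_p$-norm: one estimates $\|G^\f(f)\|_p$ or the nontangential maximal function of its Poisson integral. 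This is where the factor $(2-p)$ enters: when $p\to 2^-$ the dual index $p'\to 2^+$, and the relevant $\ell_{p'}$-summation / interpolation constant degenerates like $(2-p)^{-1}$ (equivalently $p'$), which is why the estimate is stated with the weight $(2-p)$ on the left. Concretely I expect the $(2-p)$ to come out of an application of the $L_{p'}$-boundedness (norm $\sim p'\sim (2-p)^{-1}$) of the dyadic maximal or sharp maximal operator, or of a Hölder/interpolation step in the $\ell_{p'}$ indexing of the cubes.

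Once the test function machinery is set up, the computation is: $\|g^\sharp\|_{p'}^{?}\les (2-p)^{-1}\sup\{\,|\int fg|:\|f\|_{H_p}\le 1\,\}=(2-p)^{-1}\|\el\|_{H_p^*}$, using the Fefferman--Stein-type lemma (here Lemma~\ref{carleson-BMO} and the Carleson-measure characterization of $\BMO_{p'}$ implicit in it) to pass from the raw oscillations back to $\|g^\sharp\|_{p'}$. To complete the representation statement one checks that the $g$ produced is independent of the approximation and that the pairing $\el(f)=\int fg$ persists on all of $H_1\cap L_2$ by density. I would also invoke Lemma~\ref{CZb} (or Lemma~\ref{CZ}) to control the intrinsic square function of the test bumps when bounding $\|f\|_{H_p}$.

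The main obstacle I anticipate is getting the dependence on $p$ sharp — i.e., producing exactly the factor $(2-p)$ and no worse. Bounding $\|f\|_{H_p}$ for the test superposition $f$ and simultaneously extracting the $\ell_{p'}$ lower bound for $\int fg$ requires carefully tracking the interpolation/Khintchine-type constants (which blow up as $p'\to\infty$, i.e. $p\to1$, but the relevant regime here is $p\to2$ where $p'\to2$ and the blow-up is $(2-p)^{-1}$); one must ensure the bad constant lands on the correct side. Everything else (density, identification of $g\in L_2$, the upper bound from Lemma~\ref{duality1}, the Carleson-measure reduction) is routine. I would model the delicate part verbatim on Mei's proof of \cite[Theorem~4.4]{Mei2007}, transcribing it from the noncommutative to the classical commutative setting.
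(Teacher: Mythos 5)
Your identification of the upper bound (via Lemma~\ref{duality1} combined with Lemma~\ref{CZ}) is correct, and you rightly point to Mei's scheme as the template. But for the lower bound the paper does not use the ``test $\el$ against superpositions built from $g$'' device you describe; it applies the Hahn--Banach theorem to represent $\el$ through functions $h_i$ on the upper half-space via the square-function characterization $\|f\|_{H_p}\approx_d\|G_\nabla^{\mathbb P}(f)\|_p$, derives from this an explicit formula $g=\sum_i\int_0^\infty\f^{(i)}_t*h_i(\cdot,t)\,\frac{dt}{t}$, and then proves the pointwise bound $g^\sharp\les_d \big(M(H^2)\big)^{1/2}$, where $H(y)=\big(\sum_i\int_0^\infty|h_i(y,t)|^2\frac{dt}{t}\big)^{1/2}$. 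This is a more direct route than the extremizer construction you sketch, which is vaguer and (as written) circular in the sense that you would need to already control $\|g^\sharp\|_{p'}$ to verify that the test function's $H_p$-norm is finite with the right normalization.

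More importantly, your explanation of where the factor $(2-p)$ comes from is incorrect. You write ``norm $\sim p'\sim (2-p)^{-1}$'', but $p'$ and $(2-p)^{-1}$ degenerate in \emph{opposite} regimes: as $p\to2^-$ one has $p'\to 2$ while $(2-p)^{-1}\to\infty$, and as $p\to1^+$ one has $p'\to\infty$ while $(2-p)^{-1}\to1$. The maximal function is uniformly bounded on $L_{p'}$ for $p'$ near $2$, so that is not the source. In the paper the degeneracy arises because one needs $\|(M(H^2))^{1/2}\|_{p'}=\|M(H^2)\|_{p'/2}^{1/2}$, and it is the norm of $M$ on $L_{p'/2}$ that blows up as $p\to2^-$ (since $p'/2\to1$): a direct computation gives $\|M\|_{L_{p'/2}\to L_{p'/2}}\approx_d\frac{p}{2-p}$, whence the stated $\frac1{2-p}$. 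So while your overall framing is reasonable, the mechanism you rely on for the crucial constant would not produce the claimed estimate, and the proof as proposed does not go through.
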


\begin{proof}
We will use the characterization of $H_p(\real^d)$  by the $g$-function defined by $\eqref{gradient G}$, namely
 $$\big\|f\big\|_{H_p}\approx_d \big\|G_\nabla^{\mathbb{P}}(f)\big\|_p.$$
Let $\el\in H_p(\real^d)^*$. Then by the Hahn-Banach theorem, there exist $d+1$ functions $h_i$ on the upper half space $\real^{d+1}_+$ such that
 $$\Big(\int_{\real^d}\big(\sum_{i=1}^{d+1}\int_0^\8|h_{i}(y, t)|^2\frac{dt}t\big)^{\frac{p'}2}\,dy\Big)^{\frac1{p'}}\approx_d\big\|\el\big\|_{H_p(\real^d)^*}$$
 and (with $x_{d+1}=t$)
  \begin{align*}
  \el(f)
  =\int_{\real^d}\int_0^\8\Big(\sum_{i=1}^{d+1}t\frac{\partial}{\partial x_i}\mathbb{P}_t(f)(y) h_{i}(y, t)\Big)\frac{dt}t\,dy
  =\int_{\real^d}f(x)g(x)dx,
    \end{align*}
  where
   $$g(x)=\int_{\real^d}\int_0^\8\Big(\sum_{i=1}^{d+1}t\frac{\partial}{\partial x_i}\mathbb{P}_t(y-x)(y) h_{i}(y, t)\Big)\frac{dt}t\,dy.$$
It remains to show that $g\in\BMO_{p'}(\real^d)$. All the $d+1$ terms on the right hand side are treated in the same way, so we need only to deal with one of them, say the $i$-th term. For notational simplicity, let
 $$\f_t(x)=t\frac{\partial}{\partial x_i}\mathbb{P}_t(-x),\quad h=h_i$$
and (with some abuse of notation)
 $$g=\int_0^\8\f_t*h(\cdot, t)\,\frac{dt}t,\quad H(y)=\Big(\int_0^\8| h(y, t)|^2\,\frac{dt}t\Big)^{\frac12}.$$
Given $x\in\real^d$ and  a cube $Q$ containing $x$, let
 $$a_Q=\frac1{|Q|}\int_Q\int_{\real^d}\int_0^\8\f_t(y-z)h(y, t)\un_{(2Q)^c}(y)\,\frac{dt}t\,dy\,dz.$$
Then
 \begin{align*}
 g(u)-a_Q
 &=\int_{\real^d}\int_0^\8\f_t(y-u)h(y, t)\un_{2Q}(y)\,\frac{dt}t\,dy\\
 &\;\;+\int_{\real^d}\int_0^\8\big[ \frac1{|Q|}\int_Q\big(\f_t(y-u)-\f_t(y-z)\big)dz \big]h(y, t)\un_{(2Q)^c}(y)\,\frac{dt}t\,dy\,dz\\
 &{\mathop=^{\rm def}}\; A(u)+B(u).
  \end{align*}
By the Plancherel identity and the Cauchy-Schwarz inequality, letting $\tilde h_t(y)=h(y, t)\un_{2Q}(y)$, we have
  \begin{align*}
  \int_Q|A(u)|^2du
  &\le  \int_{\real^d}|A(u)|^2du= \int_{\real^d}\Big|\int_0^\8\wh\f(t\xi)\,\wh {\tilde h}_t(\xi)\,\frac{dt}t\Big|^2d\xi\\
  &\le \int_{\real^d}\int_0^\8|\wh\f(t\xi)|^2\,\frac{dt}t\,\int_0^\8|\wh {\tilde h}_t(\xi)|^2\,\frac{dt}t\,d\xi\\
  &\les_d \int_{2Q}\int_0^\8| h(y, t)|^2\,\frac{dt}t\,dy.
    \end{align*}
  It then follows that
   $$\sup_{x\in Q}\frac1{|Q|}\int_Q|A(u)|^2du\les_d M(H^2)(x).$$
  We turn to the term $B$. Let $c$ be the center of $Q$ and $u\in Q$, then
  \begin{align*}
   |B(u)|
  &\les_d \int_{(2Q)^c}\int_0^\8\frac{\el(Q)}{(|y-c|+t)^{d+1}}\,|h(y, t)|\,\frac{dt}t\,dy\\
  &\les_d \int_{(2Q)^c}\frac{\el(Q)}{|y-c|^{d+1}}\Big(\int_0^\8|h(y, t)|^2\frac{dt}t\Big)^{\frac12}dy\\
  &\les_d \sum_{k=1}^\8 2^{-k}\, \frac1{\big(2^{k}\el(Q)\big)^d}\int_{2^{k-1}\el(Q)\le |y-c|<2^{k}\el(Q)} H(y)dy\\
  &\les_d M(H)(x).
   \end{align*}
 Thus
   $$\sup_{x\in Q}\Big(\frac1{|Q|}\int_Q|B(u)|^2du\Big)^{\frac12}\les_d M(H)(x).$$
Combining the preceding estimates, we get
  \begin{align*}
  g^\sharp(x)
  &\les\sup_{x\in Q}\Big(\frac1{|Q|}\int_Q|g(u)-a_Q|^2du\Big)^{\frac12}\\
  &\les\sup_{x\in Q}\Big(\frac1{|Q|}\int_Q|A(u)|^2du\Big)^{\frac12}+\sup_{x\in Q}\Big(\frac1{|Q|}\int_Q|B(u)|^2du\Big)^{\frac12}\\
   &\les_d \big(M(H^2)(x)\big)^{\frac12}+ M(H)(x)\les_d  \big(M(H^2)(x)\big)^{\frac12}.
    \end{align*}
  Hence,
   $$\big\|g^\sharp\big\|_{p'}\les_d\big\|\big(M(H^2)\big)^{\frac12}\big\|_{p'}\les_d\frac1{2-p}\, \big\|H\big\|_{p'}\les_d\frac1{2-p}\, \big\|\el\big\|_{H_p(\real^d)^*}.$$
This is the desired inequality.
\end{proof}

It is now easy to show part (ii) of Theorem~\ref{fML} for $1\le p\le2$.

 \begin{proof}[Proof of Theorem~\ref{fML} (ii) for $p<2$]
  Using Lemma~\ref{duality2} and taking the supremum in the inequality of Lemma~\ref{duality1} over all $g$ with $\big\|g\big\|_{\BMO_{p'}}\le1$, we obtain
  $$\big\|f\big\|_{H_p}\les_{d,\e,\d}\frac1{2-p}\,  \big\|G^\f(f)\big\|_p^{\frac{p}2}\big\|f\big\|_{H_p}^{1-\frac{p}2},$$
 whence
 $$\big\|f\big\|_{H_p}\les_{d,\e,\d}(2-p)^{-\frac2p}\,  \big\|G^\f(f)\big\|_p.$$
Since $\big\|f\big\|_{p}\le \big\|f\big\|_{H_p}$, we deduce
 $$\big\|f\big\|_{p}\les_{d,\e,\d}(2-p)^{-\frac2p}\,  \big\|G^\f(f)\big\|_p.$$
 This implies  $\mathsf{L}^{\f}_{t,p}\les_{d, \e, \d}1$ for $p\le\frac32$. For $\frac32<p<2$ we use duality and $\mathsf{L}^{\p}_{c,p'}\les_{d, \e, \d}\sqrt{p'}$ to conclude that $\mathsf{L}^{\f}_{t,p}\les_{d, \e, \d}\sqrt{p'}\les_{d, \e, \d}1$ too.
 \end{proof}

 \begin{proof}[Proof of Corollary~\ref{H1}]
 In the previous proof we have obtained $\big\|f\big\|_{H_p}\les_{d,\e,\d} \big\|G^\f(f)\big\|_p$ for $1\le p\le2$. The converse inequality is contained in Lemma~\ref{CZ}. On the other hand, by Lemma~\ref{CZb}, we get
  $$\big\|S^\p(f)\big\|_p\les_{d,\e,\d}\big\|f\big\|_{H_p}\les_{d,\e,\d} \big\|G^\f(f)\big\|_p.$$
 Hence the corollary is proved.
  \end{proof}


\section{Proof of Theorem~\ref{PH ML}}\label{Proof of Theorem PH ML}


We have seen in the previous section that the $g$-function  $G^{ \mathbb{P}}$ can be expressed as a singular integral operator. Equivalently, $G^{ \mathbb{P}}$ can be also written as a Fourier multiplier with values in $L_2(\real_+)$ (recalling that $\real_+$ is equipped with the measure $\frac{dt}t$). Let $\f:\real^d\to  L_2(\real_+)$ be the function defined by $\f(\xi)(t)=-2\pi t|\xi|e^{-2\pi t|\xi|}$ for $\xi\in\real^d$ and $t>0$. Let  $T_\f$  be the Fourier multiplier introduced in section~\ref{A variant of de Leeuw's multiplier theorem} (with $X=\com$ and $Y=L_2(\real_+)$). Then
 $$G^{ \mathbb{P}}(f)(x)=||T_\f(f)(x)\|_{L_2(\real_+)},\quad x\in\real^d,\; f\in L_p(\real^d).$$
It is clear that this symbol $\f$ satisfies the assumption of Theorem~\ref{de Leeuw} (see also Remark~\ref{de Leeuw bis}). Thus the results in that section apply to $\f$.

The corresponding periodic Fourier multiplier $M_\f$ gives rise to the $g$-function defined by the circular Poisson semigroup on $\T^d$:
 $$\mathsf{P}_r(f)=\sum_{m\in\ent^d}\wh f(m)r^{|m|}z^m,\quad f\in L_p(\T^d).$$
The associated $g$-function is defined by
 \beq\label{gT}
 G^{\mathsf{P}}(f)=\Big(\int_0^1(1-r)\big|\frac{d}{dr} \mathsf{P}_r(f)\big|^2\,dr\Big)^{\frac12}\,.
 \eeq
By the change of variables $r=e^{-2\pi t}$, elementary computations show that for any $1\le p\le\8$
 $$\|G^{\mathsf{P}}(f)\|_{L_p(\T^d)}\approx_d \|M_\f(f)\|_{L_p(\T^d;L_2(\real_+))},\quad f\in L_p(\T^d).$$
We refer to \cite[Section~8]{CXY2012} for more details.

\medskip

Now we proceed to the proof of Theorem~\ref{PH ML}. For clarity, we will divide this proof into two subsections. Let us first make an elementary observation as a prelude.  $\{\mathbb{P}_t\}_{t>0}$ is the Poisson semigroup subordinated to $\{\mathbb{H}_t\}_{t>0}$ in Bochner's sense:
 $$
 \mathbb{P}_t(f)=\frac{1}{\sqrt\pi}\,\int_0^\infty \frac{e^{-s}}{\sqrt s}\,\mathbb{H}_{\frac{t^2}{4s}}(f)ds.
$$
This formula  immediately implies
 $$\mathsf{L}^{\mathbb{H}}_{c, p}\ges \mathsf{L}^{\mathbb{P}}_{c, p}\quad \text{ and }\quad \mathsf{L}^{\mathbb{H}}_{t, p}\les \mathsf{L}^{\mathbb{P}}_{t, p}.$$
Combining this with the historical comments at the end of subsection~\ref{Littlewood-Paley g-function inequality},  it remains for us to show that $\mathsf{L}^{\mathbb{P}}_{c, p}\ges \max(\sqrt p,\,p')$, and $\mathsf{L}^{\mathbb{H}}_{t, p}\ges \sqrt p$ for $p>2$. The former is already contained in  \cite{HFC-LPS}, but we will reproduce the proof there for the convenience of the reader.


\subsection{Proof of $\mathsf{L}^{\mathbb{P}}_{c, p}\ges \max(\sqrt p,\,p')$ }


By the discussion at the beginning of this section and Corollary~\ref{subgroup}, the constant $\mathsf{L}^{\mathbb{P}}_{c, p}$ increases in the dimension $d$. So it suffices to consider the case $d=1$. This inequality for $p\le 2$ is well known. It can be easily proved as follows. Fix $s>0$ and let $f=\mathbb{P}_s$. Then
 $$t\frac{\partial}{\partial t} \mathbb{P}_t(f)(x)=\frac{t}\pi\, \frac{x^2-(t+s)^2}{(x^2+(t+s)^2)^2},\quad x\in\real.$$
For $x\ge 6s$, we have
 \begin{align*}
 G^{ \mathbb{P}}(f)(x)\ge \Big(\int_{\frac{x}3-s}^{\frac{x}2-s}\big|t\frac{\partial}{\partial t} \mathbb{P}_t(f)(x)\big|^2\,\frac{dt}t\Big)^{\frac12}\ges\frac1x.
\end{align*}
Thus
 $$\big\|G^{ \mathbb{P}}(f)\big\|_p\ges  \Big(\int_{6s}^{\8}\frac1{x^p}\,dx\Big)^{\frac1p}\ges \frac{s^{-\frac1{p'}}}{p-1}.$$
On the other hand,
 $$\big\| f\big\|_p\approx s^{-\frac1{p'}}.$$
Hence, $\mathsf{L}^{\mathbb{P}}_{c,p}\ges p'$.

\medskip

Unfortunately, the above simple argument does not apply to the case $p>2$. Our proof for the latter is much harder.  By the discussion at the beginning of this section,  it is equivalent to considering the torus $\T$ and the $g$-function defined by \eqref{gT}.
Recall that
 $$\mathsf{P}_r(\t)=\frac{1-r^2}{1-2r\cos\t+r^2}\,.$$
  It is shown in \cite{LP0} that the inequality
 $$
 \|G^{\mathsf{P}}(f)\|_{L_p(\T)}\le \mathsf{L}^{\mathsf{P}}_{c,p}\|f\|_{L_p(\T)}
 $$
is equivalent to the corresponding dyadic martingale inequality on $\O=\{-1, 1\}^\nat$. It is well known that the relevant constant in the latter martingale inequality is of order $\sqrt p$ as $p\to\8$. To reduce the determination of  optimal order of $\mathsf{L}^{\mathsf{P}}_{c,p}$ to  the martingale case, we need to refine an argument in the proof of \cite[Theorem~3.1]{LP0} whose idea originated from \cite{Bourgain83}.

Keeping the notation there, let $M=(M_k)_{0\le k\le K}$ be a finite dyadic martingale and
 $$M_k-M_{k-1}=d_k(\e_1,\cdots, \e_{k-1})\,\e_k,$$
where $(\e_k)$ are the coordinate functions of $\O$.
The transformation $\e_k={\rm sgn}(\cos\t_k)$ establishes a measure preserving embedding of $\O$ into $\T^\nat$. Accordingly, define
 \begin{align*}
  a_k(e^{\i\t_1},\cdots, e^{\i\t_{k-1}})
  &=d_k({\rm sgn}(\cos\t_1),\cdots, {\rm sgn}(\cos\t_{k-1})),\\
  b_k(e^{\i\t_{k}})&={\rm sgn}(\cos\t_k).
  \end{align*}
Note that to enlighten notation, we write an element $z\in\T$ as $z=e^{-\i \t}$, so identify $\T$ with $[-\pi,\, \pi)$, a slightly different convention from the one of section~\ref{A variant of de Leeuw's multiplier theorem}.

Given $(n_k)$ a rapidly increasing sequence of positive integers, put
 \begin{align*}
  a_{k, (n)}(e^{\i\t})
  &=a_{k, (n)}(e^{\i\t}; e^{\i\t_1},\cdots, e^{\i\t_{k-1}})
  =a_k(e^{\i(\t_1+n_1\t)},\cdots, e^{\i(\t_{k-1}+n_{k-1}\t)}),\\
  b_{k, (n)}(e^{\i\t})
  &=b_{k, (n)}(e^{\i\t};e^{\i\t_{k}})
  =b_k(e^{\i(\t_{k}+n_k\t)}),\\
  f_{(n)}(e^{\i\t})
  &=f_{(n)}(e^{\i\t}; e^{\i\t_1},\cdots, e^{\i\t_{K}})
  =\sum_{k=1}^Ka_{k, (n)}(e^{\i\t})b_{k, (n)}(e^{\i\t}).
  \end{align*}
 The functions $f_{(n)}$, $a_{k, (n)}$ and $b_{k, (n)}$ are viewed as functions on $\T$ for each $(\t_1, \cdots, \t_K)$ arbitrarily fixed. Furthermore, by approximation, we can assume that all $a_k$ and $b_k$ are polynomials. Then, if the sequence $(n_k)$ rapidly increases, Lemmas~3.4 and 3.5 of \cite{LP0} imply
  $$\frac12\, G^{\mathsf{P}}(f_{(n)})\le \Big(\sum_{k=1}^K|a_{k, (n)}|^2\,G^{\mathsf{P}}(b_{k, (n)})^2\Big)^{\frac12}\le 2G^{\mathsf{P}}(f_{(n)}).$$
 Therefore,
  \beq\label{inter}
  \Big\|\Big(\sum_{k=1}^K|a_{k, (n)}|^2\,G^{\mathsf{P}}(b_{k, (n)})^2\Big)^{\frac12}\Big\|_{L_p(\T)}
  \le 2 \mathsf{L}^{\mathsf{P}}_{c, p}\big\|f\big\|_{L_p(\T)}\,.
  \eeq
The discussion so far comes from \cite{LP0}. Now we require a finer analysis of the $g$-function $G^{\mathsf{P}}(b_{k, (n)})$.  To this end we write the Fourier series of the function $b={\rm sgn}(\cos\t)$:
  $$b(e^{\i\t})=\frac2\pi\,\sum_{j=0}^\8\frac{(-1)^j}{2j+1}\,\big[e^{\i(2j+1)\t}+e^{-\i(2j+1)\t}\big].$$
Then
 $$\frac{d}{dr}\mathsf{P}_r(b_{k, (n)})(e^{\i\t})=\frac4\pi\,n_kr^{n_k-1}{\rm Re}\Big(\sum_{j=0}^\8(-1)^j r^{2n_kj}e^{\i(2j+1)(\t_k+n_k\t)}\Big).$$
The real part on the right side is easy to compute. Indeed, letting $\rho=r^{2n_k}$ and $\eta=\t_k+n_k\t$, we have
 $$
  {\rm Re}\Big(e^{\i\eta}\,\sum_{j=0}^\8(-1)^j \rho^{j}e^{\i 2j\eta}\Big)
  ={\rm Re}\Big(\frac{e^{\i\eta}}{1+\rho e^{\i2\eta}}\Big)
  =\frac{1+\rho}{1+2\rho\cos\eta +\rho^2}\,\cos\eta.
  $$
As
 $$\frac{1+\rho}{1+2\rho\cos\eta +\rho^2}\ge \frac{1+\rho}{(1+\rho)^2}\ge\frac12,$$
it then follows that
 $$\Big| {\rm Re}\Big(e^{\i\eta}\,\sum_{j=0}^\8(-1)^j \rho^{j}e^{\i 2j\eta}\Big)\Big|\ge \frac{|\cos\eta|}2.$$
 Therefore, we deduce
 $$\left|\frac{d}{dr}\mathsf{P}_r(b_{k, (n)})(e^{\i\t})\right|^2\ges n^2_kr^{2(n_k-1)}\cos^2(\t_k+n_k\t).$$
Thus
 \begin{align*}
 G^{\mathsf{P}}(b_{k, (n)})^2
 &\ges\cos^2(\t_k+n_k\t)\,n^2_k\int_0^1(1-r)^{2-1}r^{2(n_k-1)}dr \\
 &\approx \big[1+{\rm O}(\frac1{n_k})\big]\cos^2(\t_k+n_k\t).
 \end{align*}
Now lifting both sides of \eqref{inter} to power $p$, then integrating the resulting inequality over $\T^K$ with respect to $(\t_1,\cdots, \t_K)$, we get
 \begin{align*}
 \int_\T&\int_{\T^K}\Big(\sum_{k=1}^K|a_{k,(n)}(e^{\i(\t_1+n_1\t)},\cdots, e^{\i(\t_{k-1}+n_{k-1}\t)})|^2\big[1+{\rm O}(\frac1{n_k})\big]\cos^2(\t_k+n_k\t)\Big)^{\frac{p}2}d\t_1\cdots d\t_Kd\t\\
  &\le \big(C \mathsf{L}^{\mathsf{P}}_{c, p}\big)^p\int_\T\int_{\T^K}\big|f_{(n)}(e^{\i(\t_1+n_1\t)}, \cdots, e^{\i(\t_K+n_K\t)})\big|^pd\t_1\cdots d\t_Kd\t\,.
 \end{align*}
 For each fixed $\t$, the change of variables $(\t_1,\cdots, \t_K)\mapsto(\t_1-n_1\t,\cdots, \t_K-n_K\t)$ being a measure preserving transformation of $\T^K$, we deduce
  \begin{align*}
\int_{\T^K}&\Big(\sum_{k=1}^K|a_{k,(n)}(e^{\i\t_1},\cdots, e^{\i\t_{k-1}})|^2\,\big[1+{\rm O}(\frac1{n_k})\big]\cos^2\t_k\Big)^{\frac{p}2}\,d\t_1\cdots d\t_K\\
  &\le \big(C\mathsf{L}^{\mathsf{P}}_{c, p}\big)^p\int_{\T^K}\big|f_{(n)}(e^{\i\t_1}, \cdots, e^{\i\t_K})\big|^p\,d\t_1\cdots d\t_K\,.
 \end{align*}
Letting $n_1\to\8$, we get
  \begin{align*}
\int_{\T^K}&\Big(\sum_{k=1}^K|d_{k}({\rm sgn}(\cos\t_1),\cdots, {\rm sgn}(\cos\t_{k-1}))|^2\,\cos^2\t_k\Big)^{\frac{p}2}\,d\t_1\cdots d\t_K
  \le \big(C\, \mathsf{L}^{\mathsf{P}}_{c,p}\big)^p\big\|M_K\big\|^p_{L_p(\O)}\,.
 \end{align*}

Now we consider an elementary example where $M$ is simple random walk stopped at $\pm 2$, namely
 $$d_k=\un_{\{\tau\ge k\}}\;\text{ with }\;
 \tau=\inf\big\{k: \big|\sum_{j=1}^k\e_j\big|=2\big\}.$$
 Note that the probability of the event $\{\tau=j\}$ is zero for odd $j$ and $2^{-\frac{j}2}$ for even $j$. On the other hand, recalling $\e_k={\rm sgn}(\cos\t_k)$ and letting
  $$A_j=\big\{\tau=j,\; |\cos\t_k|\ge \frac1{\sqrt 2}\,,\; 1\le k\le j\big\},$$
we easily check that the probability of $A_j$ is $8^{-\frac{j}2}$ for even $j$. Thus
 \begin{align*}
 \sum_{k=1}^K|d_{k}(\e_1,\cdots, \e_{k-1}))|^2\,\cos^2\t_k
 \ge \un_{A_j}\sum_{k=1}^j\un_{\{\tau\ge k\}}\cos^2\t_k
 \ge \frac{j}2\,\un_{A_j},
 \end{align*}
consequently, for $K=2J$
 \begin{align*}
 \int_{\T^K}&\Big(\sum_{k=1}^K|d_{k}({\rm sgn}(\cos\t_1),\cdots, {\rm sgn}(\cos\t_{k-1}))|^2\,\cos^2\t_k\Big)^{\frac{p}2}\,d\t_1\cdots d\t_K
 \ges\sum_{m=1}^J j^{\frac{p}2} 8^{-j}\ge c^p p^{\frac{p}2}.
  \end{align*}
Noting that $|M_K|\le2$ and combining all the previous inequalities together, we finally obtain
 $$\mathsf{L}^{\mathsf{P}}_{c, p}\ges \sqrt p.$$


\subsection{Proof of $\mathsf{L}^{\mathbb{H}}_{t, p}\ges \sqrt p$  for $p>2$}


Again, it suffices to consider the torus case. The $g$-function relative to the heat semigroup on $\T$ is defined by
 $$G^{\mathsf{H}}(f)=\Big(\int_0^1(1-r)|\frac{d}{dr} \mathsf{H}_r(f)|^2dr\Big)^{\frac12}\,,$$
where
 $$\mathsf{H}_r(f)(\t)=\sum_{n\in\ent}\wh f(n)\,r^{n^2}e^{\i n\t}.$$
We will need the following elementary inequality that is known to experts. Let  $a=(a_k)$ be a finite complex sequence  and $f=\sum_{k} a_k e^{\i 2^k\t}$. Then
\beq\label{lacunary g-function}
 \big\|G^{\mathsf{H}}(f)\big\|_{L_p(\T)}\approx\|a\|_{\el_2}\,.
\eeq
See \cite{CHLM} for related results in a more general setting.
The proof is easy:
 \begin{align*}
 G^{\mathsf{H}}(f)(e^{\i \t})^2
 &=\int_0^1(1-r)\big|\sum_{k} a_k4^k r^{4^k-1}e^{\i 2^k\t}\big|^2dr\\
 &\le\sum_{j,k} a_j\bar a_k \frac{4^{j+k}}{(4^j+4^k-1)^2}\\
 &\le\sum_{j,k} |a_j|^2\frac{4^{j+k}}{(4^j+4^k-1)^2}\\
 &\les \sum_{j} |a_j|^2\,.
 \end{align*}
This implies
 $$\big\|G^{\mathsf{H}}(f)\big\|_{L_p(\T)}\les\|a\|_{\el_2}.$$
However, for $2\le p\le\8$,
 $$\big\|G^{\mathsf{H}}(f)\big\|_{L_p(\T)}\ge\big\|G^{\mathsf{H}}(f)\big\|_{L_2(\T)}\approx\|a\|_{\el_2}.$$
Therefore, for $2\le p\le\8$,
 $$\big\|G^{\mathsf{H}}(f)\big\|_{L_p(\T)}\approx\|a\|_{\el_2}.$$
The remaining case $1\le p<2$ then follows from the H\"older inequality.

\medskip

Now it is easy to show $\mathsf{L}^{\mathsf{H}}_{t, p}\ges \sqrt p$ for $p>2$. Indeed,  \eqref{lacunary g-function}  shows that $\mathsf{L}^{\mathsf{H}}_{t, p}$ dominates the best constant $C_p$ in the following inequality
 $$\Big\|\sum_{k\ge1} a_ke^{\i 2^k\t}\Big\|_{L_p(\T)}\le C_p\|a\|_{\el_2}$$
for any finite sequence $a=(a_k)$. It is well known that the best constant $C_p$ in the above inequality is of order $\sqrt p$ as $p\to\8$ (see \cite{Rudin}); this fact can be also seen from the equivalence, up to universal constants,  between this inequality and the classical Khintchine inequality (cf. \cite{Pisier}). Thus $\mathsf{L}^{\mathbb{H}}_{t, p}\ges \sqrt p$. This completes the proof of Theorem~\ref{PH ML}.


\section{Proofs of Theorem~\ref{RvsT}, Theorem~\ref{multi-parameter} and Corollary~\ref{DLP-const}}\label{Proof of Theorem RvsT}


We begin with the proof of Theorem~\ref{RvsT}.

\begin{proof}[Proof of Theorem~\ref{RvsT}]
 Let $H=\el_2(\Delta)$ be the Hilbert space indexed by the family $\Delta$ of dyadic rectangles and $\{e_R\}_{R\in\Delta}$ be its canonical basis. Let $\f:\real^d\to H$ be the function given by
 $$\f=\sum_{R\in\Delta}\un_{R}\,e_R.$$
We will apply Theorem~\ref{de Leeuw} to the case where $X=\com$ and $Y=H$. With the notation introduced in section~\ref{A variant of de Leeuw's multiplier theorem}, for any $f\in L_p(\real^d)$ we have
 $$S^\Delta(f)=\|T_\f(f)\|_H,$$
so
 $$\mathsf{L}^{\Delta}_{c,p, d}=\|T_\f\|_{p\to p}\quad\text{ and }\quad \mathsf{L}^{\Delta}_{t,p, d}=\|T_\f^{-1}\|_{p\to p}.$$
Note that
 $$\|T_{\f^{(t)}}\|_{p\to p}=\|T_\f\|_{p\to p}\quad\text{ and }\quad \|T^{-1}_{\f^{(t)}}\|_{p\to p}=\|T^{-1}_\f\|_{p\to p},\quad \forall\,t>0.$$
Choose $t$ irrational. Then $\f^{(t)}$ satisfies the assumption of Theorem~\ref{de Leeuw} (i), so
 $$\|M_{\f^{(t)}}\|_{p\to p}\le\|T_{\f^{(t)}}\|_{p\to p}\quad \text{ and }\quad \|M^{-1}_{\f^{(t)}}\|_{p\to p}\le\|T^{-1}_{\f^{(t)}}\|_{p\to p}.$$
Letting $t\to1$, we deduce
 $$\|M_{\f}\|_{p\to p}\le\|T_{\f}\|_{p\to p}\quad \text{ and }\quad \|M^{-1}_{\f}\|_{p\to p}\le\|T^{-1}_{\f}\|_{p\to p}.$$
 This means
 $$\mathsf{L}^{\wt\Delta}_{c,p, d}\le\mathsf{L}^{\Delta}_{c,p, d}\quad\text{ and }\quad \mathsf{L}^{\wt\Delta}_{t,p, d}\le\mathsf{L}^{\Delta}_{t,p, d}.$$
To show the converse inequalities, we note that for any integer $j$ and any $f\in L_p(\T^d)$ we have
 $$\|M_{\f^{(2^j)}}(f)\|_H=S^{\wt\Delta}(f).$$
 This implies
 $$\mathsf{L}^{\wt\Delta}_{c,p, d}=\|M_{\f^{(2^j)}}\|_{p\to p}\quad\text{ and }\quad \mathsf{L}^{\wt\Delta}_{t,p, d}=\|M_{\f^{(2^j)}}^{-1}\|_{p\to p}.$$
 Thus by Theorem~\ref{de Leeuw} (ii), we get
 $$\mathsf{L}^{\Delta}_{c,p, d}\le \liminf_{j\to-\8}\|M_{\f^{(2^j)}}\|_{p\to p}=\mathsf{L}^{\wt\Delta}_{c,p, d}
 \;\text{ and }\; \mathsf{L}^{\Delta}_{t,p, d}\le \liminf_{j\to-\8}\|M^{-1}_{\f^{(2^j)}}\|_{p\to p}=\mathsf{L}^{\wt\Delta}_{t,p, d}.$$
The proof is finished.
 \end{proof}

\begin{rk}
 The above proof is also applicable to the one-sided Littlewood-Paley-Rubio Francia inequality in \cite{rubio, journe}: This inequality and its dual form in \cite{Bourgain85, Osipov11} do not make difference for $\real^d$ and $\T^d$.
\end{rk}

We will need the following lemma for the proof of Theorem~\ref{multi-parameter}. This lemma is of interest for its own right and is the Hilbert space valued extension of a classical theorem of Marcinkiewicz and Zygmund (cf. \cite[Theorem~V.2.7]{GRF}).

Let $(\O, \mu)$ be  a measure space,  $H$ a Hilbert space and $1\le p<\8$. Consider a  linear operator $T$ from $L_p(\O)$ to $L_p(\O; H)$. We are interested in the following inequalities
 $$
 \a^{-1}\|f\|_{L_p(\O)}\le \| T(f)\|_{L_p(\O; H)}\le \b\|f\|_{L_p(\O)},\quad f\in L_p(\O).
 $$
Like for \eqref{Fourier R}, if the first inequality holds for some $\a<\8$, the least $\a$ is denoted by $\|T^{-1}\|_{p\to p}$ while the least $\b$, if exists, is equal to $\|T\|_{p\to p}$. If $\a$ or $\b$ does not exist, $\|T^{-1}\|_{p\to p}$ or $\|T\|_{p\to p}$ is interpreted as infinite.

Given another Hilbert space $K$, consider the tensor ${\rm I}_K\ot T: L_p(\O; K)\to L_p(\O; K\ot H)$, where $ K\ot H$ denotes the tensor Hilbert space. Let $\|{\rm I}_K\ot T\|_{p\to p}$ and  $\|({\rm I}_K\ot T)^{-1}\|_{p\to p}$ be the best constants, if exist, in the following inequalities
  $$
 \a^{-1}\|f\|_{L_p(\O; K)}\le \| {\rm I}_K\ot T(f)\|_{L_p(\O; K\ot H)}\le \b\|f\|_{L_p(\O; K)},\quad f\in L_p(\O; K).
 $$

\begin{lem}\label{MZ}
Let $1\le p<\8$. Then
 $$\|{\rm I}_K\ot T\|_{p\to p}\les \|T\|_{p\to p}\quad\text{ and }\quad \|({\rm I}_K\ot T)^{-1}\|_{p\to p}\les \|T^{-1}\|_{p\to p}.$$
 \end{lem}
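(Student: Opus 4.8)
The plan is to prove Lemma~\ref{MZ} by reducing the statement about the Hilbert-space-valued tensor operator ${\rm I}_K\ot T$ to the scalar inequalities for $T$, using the classical Marcinkiewicz--Zygmund machinery in its vector-valued form. First I would observe that it suffices to treat the case $K=\el_2^n$ (finite-dimensional), since $L_p(\O;K)$-norms are controlled by supremizing over finite-dimensional subspaces and every Hilbert space is an increasing union of such subspaces; the constants obtained will be independent of $n$. Then for $f=\sum_{i=1}^n f_i\ot \xi_i$ with $\{\xi_i\}$ an orthonormal system in $K$ and $f_i\in L_p(\O)$, we have $\|f\|_{L_p(\O;K)}=\big\|\big(\sum_i|f_i|^2\big)^{1/2}\big\|_{L_p(\O)}$ and $\|{\rm I}_K\ot T(f)\|_{L_p(\O;K\ot H)}=\big\|\big(\sum_i\|T f_i\|_H^2\big)^{1/2}\big\|_{L_p(\O)}$, because $\{\xi_i\}$ is orthonormal in $K$ and the tensor norm on $K\ot H$ splits accordingly.

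The key step is then the Marcinkiewicz--Zygmund inequality itself: for any bounded linear operator $S$ from $L_p(\O)$ to $L_p(\O')$ (here I take $S=T$, viewed with target $L_p(\O;H)$, which is still an $L_p$-space of a measure space after identifying $L_p(\O;H)$ with a suitable $L_p$ when $H=\el_2$) one has
\beq\label{MZineq}
\Big\|\Big(\sum_{i}|S f_i|^2\Big)^{\frac12}\Big\|_{L_p(\O')}\les \|S\|_{p\to p}\,\Big\|\Big(\sum_{i}|f_i|^2\Big)^{\frac12}\Big\|_{L_p(\O)}.
\eeq
The standard proof of \eqref{MZineq} uses Fubini and the Khintchine inequality: introduce a Rademacher sequence $(r_i)$ on an auxiliary probability space, write $\sum_i|f_i|^2\approx_p \E_r\big|\sum_i r_i f_i\big|^2$, apply $S$ to the single function $\sum_i r_i f_i$ for each fixed sign pattern, use boundedness of $S$, then integrate in $r$ and apply Khintchine again in the reverse direction. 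Since $L_p(\O;H)\cong L_p$ of a product measure space when $H$ is separable Hilbert, the modulus $|S f_i|$ is replaced by $\|T f_i\|_H$ throughout, and \eqref{MZineq} gives exactly $\|{\rm I}_K\ot T\|_{p\to p}\les\|T\|_{p\to p}$. Applying the same inequality to $T^{-1}$ on the range of $T$ (where it is a bounded linear map into $L_p(\O)$) yields $\|({\rm I}_K\ot T)^{-1}\|_{p\to p}\les\|T^{-1}\|_{p\to p}$; one only needs to check that the tensor ${\rm I}_K\ot T$ is injective with the expected range, which is immediate from the decomposition above since the $\xi_i$ are linearly independent.

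The main obstacle I anticipate is bookkeeping rather than anything deep: making the identification $L_p(\O;H)\cong L_p(\O\times\nat)$ clean enough that the Marcinkiewicz--Zygmund inequality for scalar $L_p\to L_p$ operators applies verbatim to $T$, and handling the inverse direction, where $T^{-1}$ is only defined on the (not necessarily closed, but that is harmless for the a priori estimate) image of $T$ — one should phrase everything in terms of the a priori inequality $\|f\|_p\les \a\|Tf\|$ rather than the operator $T^{-1}$ as such, exactly as the paper does for \eqref{Fourier R}. There is also the mild point that the constant in Khintchine's inequality grows like $\sqrt p$; but it enters symmetrically on both sides of \eqref{MZineq}, so it cancels and the final bound has an absolute implied constant, consistent with the statement $\les$ (no dependence on $p$).
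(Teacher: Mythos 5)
Your proposal has the right flavor but contains a genuine gap in the forward inequality for $p>2$, and the reduction you lean on is not valid.

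First, the identification ``$L_p(\O;H)\cong L_p$ of a product measure space'' is false for $p\ne 2$: the norm on $L_p(\O;\el_2)$ is $\big(\int_\O(\sum_m|f_m|^2)^{p/2}\big)^{1/p}$, whereas $L_p(\O\times\nat)$ carries $\big(\int_\O\sum_m|f_m|^p\big)^{1/p}$. These disagree unless $p=2$, so you cannot treat $T\colon L_p(\O)\to L_p(\O;H)$ as a scalar $L_p\to L_p$ operator and invoke the classical Marcinkiewicz--Zygmund theorem verbatim. The whole point of the lemma is precisely to handle the extra $H$-valued structure, so this reduction begs the question.

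Second, the claim that the Khintchine constants ``enter symmetrically on both sides and cancel'' is where the real trouble lies, and it is specific to the forward bound for $p>2$. On the domain side the $f_i$ are \emph{scalar}: using standard Gaussians one has the exact identity $\E\big|\sum_i g_i f_i\big|^p=\g_p^p\big(\sum_i|f_i|^2\big)^{p/2}$ with $\g_p\approx\sqrt p$. On the codomain side the $Tf_i$ are $H$-\emph{valued}, and the only inequality available is the Hilbert-valued Khintchine inequality $A_p^{-1}\big(\sum\|a_i\|_H^2\big)^{1/2}\le\big(\E\|\sum g_ia_i\|_H^p\big)^{1/p}\le B_p\big(\sum\|a_i\|_H^2\big)^{1/2}$, which is not an identity. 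For the forward direction one uses the lower Hilbert bound (constant $A_p\approx 1$ for all $p$) on the codomain and the scalar identity (constant $\g_p$) on the domain, and the resulting bound is $\|{\rm I}_K\ot T\|_{p\to p}\le A_p\,\g_p\,\|T\|_{p\to p}\approx\sqrt p\,\|T\|_{p\to p}$ when $p>2$. There is no cancellation, because the two Khintchine-type constants that actually appear are $A_p$ and $\g_p$, not the same quantity on both sides; and one cannot sharpen $A_p$ since, for an orthonormal family $(a_i)=(e_i)$, the ratio $\big(\E\|\sum g_ie_i\|^p\big)^{1/p}/\big(\sum\|e_i\|^2\big)^{1/2}$ stays bounded.

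The paper sidesteps this obstruction exactly where you need to: it proves the forward bound $\|{\rm I}_K\ot T\|_{p\to p}\les\|T\|_{p\to p}$ directly only for $1\le p\le 2$ (where both $A_p$ and $\g_p$ are absolute), and for $p>2$ it passes to the adjoint $T^*\colon L_{p'}(\O;H)\to L_{p'}(\O)$, observes $\|{\rm I}_K\ot T\|_{p\to p}=\|{\rm I}_K\ot T^*\|_{p'\to p'}$, and applies the case $p'\le 2$ to $T^*$ (with Hilbert-valued domain, which the same Gaussian computation handles). Your proposal contains no duality step, and without one the forward inequality for $p>2$ is unproven. By contrast, your inverse inequality is in fact safe for all $p$: there one uses the upper Hilbert-valued Khintchine constant $B_p$ together with $\g_p$, and these really are comparable ($B_p\approx\g_p\approx\sqrt p$ for $p\ge 2$, both $\approx 1$ for $p\le 2$), so the constants genuinely cancel. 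But that cancellation is a fortunate feature of the inverse direction, not a general phenomenon, and your write-up presents it as if it applied to both directions.
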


\begin{proof}
 It suffices to consider a finite dimensional $K$, say, $K=\el_2^n$. Let $(g_1,\cdots, g_n)$ be a standard complex Gaussian system, the corresponding expectation denoted by $\E$. Then for any $(\a_1, \cdots, \a_n)\in\com^n$ we have
  $$\E\big|\sum_{k=1}^n \a_k g_k\big|^p=\g_p^p \big(\sum_{k=1}^n |\a_k|^2\big)^{\frac{p}2}\;\text { with }\; \g_p^p=\E|g_1|^p.$$
Now let $f=(f_1, \cdots, f_n)\in L_p(\O; \el_2^n)$. Then
 $$
 \|f\|_{L_p(\O; \el_2^n)}^p=\int_\O\big(\sum_{k=1}^n |f_k|^2\big)^{\frac{p}2}=\g_p^{-p}\int_\O\E\big|\sum_{k=1}^n g_kf_k\big|^p.
 $$
On the other hand,
 $$
  \|{\rm I}_K\ot T(f)\|_{L_p(\O; \el_2^n\ot H)}^p=\int_\O\big(\sum_{k=1}^n \|T(f_k)\|_H^2\big)^{\frac{p}2}.
 $$
Recall the following well known fact, the Hilbert-valued Khintchine inequality:
 $$A_p^{-1} \big(\sum_{k=1}^n \|a_k\|_H^2\big)^{\frac12}\le \Big(\E\big\|\sum_{k=1}^n g_ka_k\big\|_H^p\Big)^{\frac1p}\le B_p \big(\sum_{k=1}^n \|a_k\|_H^2\big)^{\frac12},\quad a_k\in H;$$
moreover, $1\les A_p\le1$ for $1\le p\le2$ and $1\le B_p\les\sqrt p\approx \g_p$ for $2\le p<\8$. Using this inequality, we can write (recalling $K=\el_2^n$)
 \begin{align*}
  \|{\rm I}_K\ot T(f)\|_{L_p(\O; \el_2^n\ot H)}^p&\approx\int_\O\E\big\|\sum_{k=1}^n g_kT(f_k)\big\|_H^p,\quad p\le 2,\\
  \|{\rm I}_K\ot T(f)\|_{L_p(\O; \el_2^n\ot H)}^p&\ges\g_p^{-p}\int_\O\E\big\|\sum_{k=1}^n g_kT(f_k)\big\|_H^p,\quad p> 2.
 \end{align*}
Thus for $p\le2$,
  \begin{align*}
  \|{\rm I}_K\ot T(f)\|_{L_p(\O; \el_2^n\ot H)}^p
  &\approx\E\int_\O\big\|\sum_{k=1}^n g_kT(f_k)\big\|_H^p\\
  &\les \|T\|_{p\to p}^p\, \E\int_\O\big|\sum_{k=1}^n g_kf_k\big|^p\\
  &=\|T\|_{p\to p}^p\,\g_p^{p} \|f\|_{L_p(\O; \el_2^n)}^p\\
  &\approx \|T\|_{p\to p}^p\, \|f\|_{L_p(\O; \el_2^n)}^p.
  \end{align*}
 This yields $\|{\rm I}_K\ot T\|_{p\to p}\les \|T\|_{p\to p}$.

 On the other hand, for $p\ge1$, we have
  \begin{align*}
  \|{\rm I}_K\ot T(f)\|_{L_p(\O; \el_2^n\ot H)}^p
  &\ges\g_p^{-p}\,\E\int_\O\big\|\sum_{k=1}^n g_kT(f_k)\big\|_H^p\\
  &\ges \g_p^{-p}\|T^{-1}\|_{p\to p}^{-p}\, \E\int_\O\big|\sum_{k=1}^n g_kf_k\big|^p\\
  &=\g_p^{-p}\|T^{-1}\|_{p\to p}^{-p}\g_p^{p} \|f\|_{L_p(\O; \el_2^n)}^p\\
  &= \|T^{-1}\|_{p\to p}^{-p}\, \|f\|_{L_p(\O; \el_2^n)}^p,
  \end{align*}
 whence $\|({\rm I}_K\ot T)^{-1}\|_{p\to p}\les \|T^{-1}\|_{p\to p}$.

It remains to show $\|{\rm I}_K\ot T\|_{p\to p}\les \|T\|_{p\to p}$ for $p>2$. To this end, we use duality. If $\|T\|_{p\to p}$ is finite, then the adjoint $T^*: L_{p'}(\O; H)\to L_{p'}(\O)$ is a bounded operator with norm equal to $ \|T\|_{p\to p}$. In the same way, we have
 $$\|{\rm I}_K\ot T\|_{p\to p}=\big\|{\rm I}_K\ot T^*:  L_{p'}(\O; K\ot H)\to L_{p'}(\O; K)\big\|.$$
Then arguing as above with $T^*$  and $p'$ instead of $T$ and $p$, respectively, we get
 $$\big\|{\rm I}_K\ot T^*:  L_{p'}(\O; K\ot H)\to L_{p'}(\O; K)\big\|\les \big\|T^*: L_{p'}(\O; H)\to L_{p'}(\O)\big\|,$$
which implies $\|{\rm I}_K\ot T\|_{p\to p}\les \|T\|_{p\to p}$ for $p>2$.
\end{proof}

\begin{rk}\label{MZbis}
Except the last part, the above proof works for $T$ defined on a closed linear subspace $S$ of $L_p(\O)$. Namely, letting $S\ot_pH$ be the closure of $S\ot H$ in $L_p(\O; H)$, then
  \begin{align*}
  \big\|{\rm I}_K\ot T: S\ot_pK\to S\ot_p(K\ot H)\big\|
  &\les \big\|T: S\to S\ot_pH\big\|,\; 1\le p\le2;\\
 \big\|({\rm I}_K\ot T)^{-1}: S\ot_p(K\ot H)\to S\ot_pK\big\|
 &\les \big\||T^{-1}: S\ot_pH\to S\big\|,\; 1\le p<\8.
 \end{align*}
\end{rk}

\begin{proof}[Proof of Theorem~\ref{multi-parameter}]
 By considering $d$-fold tensor products of functions on $\T$, i.e., functions on $\T^d$ of the form $f(z)=f_1(z_1)\cdots f_d(z_d)$, we easily check
  $$\big(\mathsf{L}^{\wt\Delta}_{c,p, 1}\big)^d\le \mathsf{L}^{\wt\Delta}_{c,p, d}\quad\text{ and }\quad
 \big(\mathsf{L}^{\wt\Delta}_{t,p, 1}\big)^d\le \mathsf{L}^{\wt\Delta}_{t,p, d}.$$
Lemma~\ref{MZ} allows us to prove the converse inequalities. We do this for the second, the first being similarly treated; we consider only  the case $d=2$, an iteration argument will then give the general case. Let $\wt\Delta^{(1)}$ be the family of dyadic intervals of $\ent$ and $\wt\Delta^{(2)}$ the same family but with $\ent$ viewed as the second factor of $\ent^2$. Then (with $d=2$)
 $$\wt\Delta=\big\{R^{(1)}\times R^{(2)}\,:\, R^{(i)}\in \wt\Delta^{(i)}, \; i=1, 2\big\}.$$
Now let $f$ be a polynomial on $\T^2$. By the Fubini theorem and \eqref{DLP-T} applied to the first variable $z_1\in\T$, we get
\begin{align*}
 \|f\|_{L_p(\T^2)}^p
 =\int_{\T}dz_2\int_{\T}|f(z_1, z_2)|^pdz_1
 \le \big(\mathsf{L}^{\wt\Delta}_{t,p, 1}\big)^p\int_{\T}dz_2\int_{\T}\big(\sum_{R^{(1)}\in\wt\Delta^{(1)}}|S_{R^{(1)}}\big(f(\cdot, z_2)\big)(z_1)|^2\big)^{\frac{p}2}dz_1.
 \end{align*}
Let $K=\el_2(\wt\Delta^{(1)})$ equipped with the canonical basis $\{e_{R^{(1)}}\}_{R^{(1)}\in\wt\Delta^{(1)}}$. Then
  $$\big(\sum_{R^{(1)}\in\wt\Delta^{(1)}}|S_{R^{(1)}}\big(f(\cdot, z_2)\big)(z_1)|^2\big)^{\frac{1}2}=\big\|\sum_{R^{(1)}\in\wt\Delta^{(1)}}S_{R^{(1)}}\big(f(\cdot, z_2)\big)(z_1)e_{R^{(1)}}\big\|_K.$$
For each fixed $z_1$, we apply Lemma~\ref{MZ} to the $K$-valued function on the right hand side in the variable $z_2$ in order to infer
 \begin{align*}
 \int_{\T}&\big\|\sum_{R^{(1)}\in\wt\Delta^{(1)}}S_{R^{(1)}}\big(f(\cdot, z_2)\big)(z_1)e_{R^{(1)}}\big\|_K^p\,dz_2\\
  &\les \big(\mathsf{L}^{\wt\Delta}_{t,p, 1}\big)^p\int_{\T}\Big(\sum_{R^{(2)}\in\wt\Delta^{(2)}}\big\|S_{R^{(2)}}\big[\sum_{R^{(1)}\in\wt\Delta^{(1)}}S_{R^{(1)}}\big(f(\cdot, z_2)\big)(z_1)e_{R^{(1)}}\big]\big\|_K^2\Big)^{\frac{p}2}\,dz_2\\
 &=\big(\mathsf{L}^{\wt\Delta}_{t,p, 1}\big)^p\int_{\T}S^{\wt\Delta}(f)(z_1, z_2)^p\,dz_2.
   \end{align*}
 Combining the previous inequalities, we get
   $$\|f\|_{L_p(\T^2)}\les \big(\mathsf{L}^{\wt\Delta}_{t,p, 1}\big)^2   \|S^{\wt\Delta}(f)\|_{L_p(\T^2)},$$
  whence $\mathsf{L}^{\wt\Delta}_{t,p, 2}\les \big(\mathsf{L}^{\wt\Delta}_{t,p, 1}\big)^2$.
\end{proof}

\begin{rk}
 Pichorides  \cite{Pichorides92} studied the first inequality of \eqref{DLP-T} for $d=1$ restricted to functions in the Hardy space, and proved that the corresponding constant is of $p'$ as $p\to1$. Combined with Remark~\ref{MZbis}, the above proof shows that Pichorides' result extends to higher dimensions, we thus recover a result of \cite{BakasRS} (see also \cite{Bakas21} for related results).
\end{rk}

We conclude the paper with the proof of Corollary~\ref{DLP-const}.

\begin{proof}[Proof of Corollary~\ref{DLP-const}]
 By Theorem~\ref{RvsT}, Theorem~\ref{multi-parameter}  and the known results mentioned in the historical comments at the end of section~\ref{Introduction}, we need only to show $p^{\frac{d}2}\les\mathsf{L}^{\Delta}_{t,p, 1}\les  p$ for $2\le p<\8$. The first inequality is proved by using lacunary series as in the last part of the proof of Theorem~\ref{PH ML}. It remains to show the second, that is, we must prove
  $$\|f\|_p\les p\|S^{\Delta}(f)\|_p,\quad f\in L_p(\real).$$
 To this end, it suffices to consider a (nice) function $f$ whose Fourier transform is supported in $\real_+$. Fixing such an $f$, let $S_k(f)=S_R(f)$ for $R=[2^{k-1},\, 2^k)$, i.e.,  $\wh{S_k(f)}=\un_{[2^{k-1},\, 2^k)}\wh f$.

 We will use the smooth version of $S^{\Delta}$. Let $\f$ be a $C^\8$ function on $\real$ whose Fourier transform is supported in $\{\xi: \frac12<|\xi|<4\}$ and satisfies
 $$\sum_{k\ent}\wh\f(2^{-k}\xi)=1,\quad \xi\in\real\setminus\{0\}.$$
Then the smooth version of $S^{\Delta}$ is the discretization of the $g$-function in \eqref{G-function}:
 $$
 G^{\f}_{\rm dis}(g)(x)=\Big(\sum_{k\in\ent}|\f_k*g(x)|^2 \Big)^{\frac12}, \quad x\in\real
 $$
 for any (nice) function $g$ on $\real$.
Then
 \begin{align*}
 \int_{\real} f(x)\,\overline{g(x)}dx
 &= \sum_{k\in\ent}\sum_{j=k-1}^{k+1}\int_{\real} S_k(f)(x)\,\overline{\f_j*g(x)}dx.
 \end{align*}
Thus by the H\"older inequality,
 $$
 \Big|\int_{\real} f(x)\overline{g(x)}dx\Big|
 \le 3\|S^{\Delta}(f)\|_p \|G^{\f}_{\rm dis}(g)\|_{p'}.$$
However, it is well known that
 $$\|G^{\f}_{\rm dis}(g)\|_{p'}\les p \|g\|_{p'}.$$
This is also the discrete analogue of Theorem~\ref{fML} (i). It then follows that
  $$
 \Big|\int_{\real} f(x)\overline{g(x)}dx\Big|\les p\|S^{\Delta}(f)\|_p \|g\|_{p'}.$$
Taking the supremum over all $g$ with $\|g\|_{p'}\le1$ yields the desired inequality on $f$.
\end{proof}


\bigskip \n{\bf Acknowledgements.}  I am indebted to Assaf Naor  for many inspiring communications that are special impulse to my research carried out here as well as largely motivated  my work \cite{HFC-LPS} (Assaf also asked himself the problem on the optimal orders of the constants in \eqref{CLPt}  in his own research). I am grateful to Odysseas Bakas, Guixiang Hong, Tao Mei and Lixin Yan for useful discussions, and also to Odysseas Bakas and Hao Zhang for allowing me to include their estimate $\mathsf{L}^{\Delta}_{t,p, d}\les_d p^d$ in Corollary~\ref{DLP-const} as well as its proof. Finally, I wish to thank the anonymous referee for valuable suggestions. This work is partially supported by the French ANR project (No. ANR-19-CE40-0002).
\bigskip


\end{document}